\numberwithin{equation}{section}
\numberwithin{table}{section}
\theoremstyle{plain}
\newtheorem{theorem}{Theorem}[section]
\newtheorem{lemma}{Lemma}[section]
\newtheorem{corollary}{Corollary}[section]
\theoremstyle{definition}
\theoremstyle{remark}
\newtheorem{remark}{Remark}[section]
\title{Bifurcation Analysis of a Predator–Prey Model with Allee Effect and Cooperative Hunting}
\author{Yujie Gao }  
\author{Ton Viet Ta}  
\affil{Mathematical Modeling Laboratory, Kyushu University\\
744 Motooka, Nishi Ward, Fukuoka 819-0395, Japan}
\date{\today}
\begin{document}
\maketitle

\begin{abstract}
We propose a novel predator–prey model that integrate two ecologically significant mechanisms: the Allee effect in the prey population and cooperative hunting behavior among predators. Building upon the Rosenzweig–MacArthur framework, our model modifies the prey growth term to incorporate the Allee effect and introduces a nonlinear functional response reflecting predator cooperation. 

We establish the existence and boundedness of global solutions for the system and analyze the local and global stability of its equilibria. In addition, we perform a comprehensive bifurcation analysis, including transcritical, saddle-node, Hopf, and heteroclinic bifurcations, to explore how system dynamics change with key parameters. These results reveal rich and biologically relevant behaviors, such as multiple equilibria, transitions in stability, and the emergence of complex dynamical patterns.

\medskip
\noindent{\bf Keywords}: Rosenzweig–MacArthur model, Allee effect, cooperative hunting, predator–prey dynamics, bifurcation analysis
\end{abstract}

\section{Introduction}

Predator–prey interactions are a cornerstone of ecological dynamics. Over the past several decades, many mathematical models have been developed to describe and analyze these interactions. A foundational framework in this field is the Lotka–Volterra model, introduced independently by Lotka and Volterra in the 1920s~\cite{lotka1925elements, lotka1927fluctuations}. This pioneering model presents a simplified view of predator–prey systems, assuming logistic growth for the prey and a specialist predator wholly dependent on the prey for survival. The model also assumes a constant mortality rate for the predator.

Building upon this, Rosenzweig and MacArthur proposed a more biologically realistic model by incorporating density-dependent prey growth and a Holling type II functional response~\cite{rosenzweig1963graphical}. Their model is formulated as the following system of differential equations:
\begin{equation}\label{equ01}
\left\{
\begin{aligned}
&\frac{dx(t)}{dt} = x(t)f(x(t)) - \frac{\lambda x(t)y(t)}{1 + h \lambda x(t)}, \\
&\frac{dy(t)}{dt} = \frac{\lambda x(t)y(t)}{1 + h \lambda x(t)} - s y(t),
\end{aligned}
\right.
\end{equation}
where \( x(t) \) and \( y(t) \) denote the densities of prey and predator populations at time \( t \), respectively. The prey's growth rate is governed by \( x(t)f(x(t)) \), where \( f(x) \) is a bounded function that becomes negative when the prey exceeds its carrying capacity \( K > 0 \). The term \( \frac{\lambda x(t)}{1 + h\lambda x(t)} \) models the Holling type II functional response~\cite{holling1959some}, with \( \lambda \) as the prey encounter rate and \( h \) as the predator’s handling time. The constant \( s \) represents the predator’s per-capita death rate.

The Rosenzweig–MacArthur model has served as the basis for extensive investigations into predator–prey dynamics. Dercole et al.~\cite{dercole2003bifurcation} analyzed local and codimension-two bifurcations; Sugie and Saito~\cite{sugie2012uniqueness} focused on the uniqueness of limit cycles via Hopf bifurcation; Yuan and Wang~\cite{yuan2023bifurcation} extended the model to a stochastic setting; and Lu~\cite{lu2021global} examined both local and global bifurcations in the Bazykin variant.

However, the classical model does not capture the \emph{Allee effect}, an ecologically important phenomenon in prey dynamics. The Allee effect describes a positive correlation between population density and individual fitness~\cite{kramer2009evidence, drake2004allee}. At low densities, prey populations may suffer reduced growth due to difficulties in mate finding, diminished cooperative defense, or limited resource acquisition~\cite{HARTONO2024109112,Ta_2018}. As density increases, these constraints lessen, enhancing growth rates up to the carrying capacity.

Mathematically, the Allee effect is often modeled via the prey growth term:
\[
r_1 x(t) \left(1 - \frac{x(t)}{k_1}\right)(x(t) - k_0),
\]
where \( r_1 > 0 \) is the intrinsic growth rate, \( k_1 > k_0 \) is the carrying capacity, and \( k_0 \) is the Allee threshold. A positive \( k_0 \) indicates a \emph{strong} Allee effect, while a negative \( k_0 \) corresponds to a \emph{weak} Allee effect~\cite{berec2007multiple, begon2014essentials}.

Figure~\ref{figure9} illustrates the contrast between logistic growth and prey growth with weak or strong Allee effects.
\begin{figure}[H]
  \begin{center}
    \includegraphics[scale=0.35]{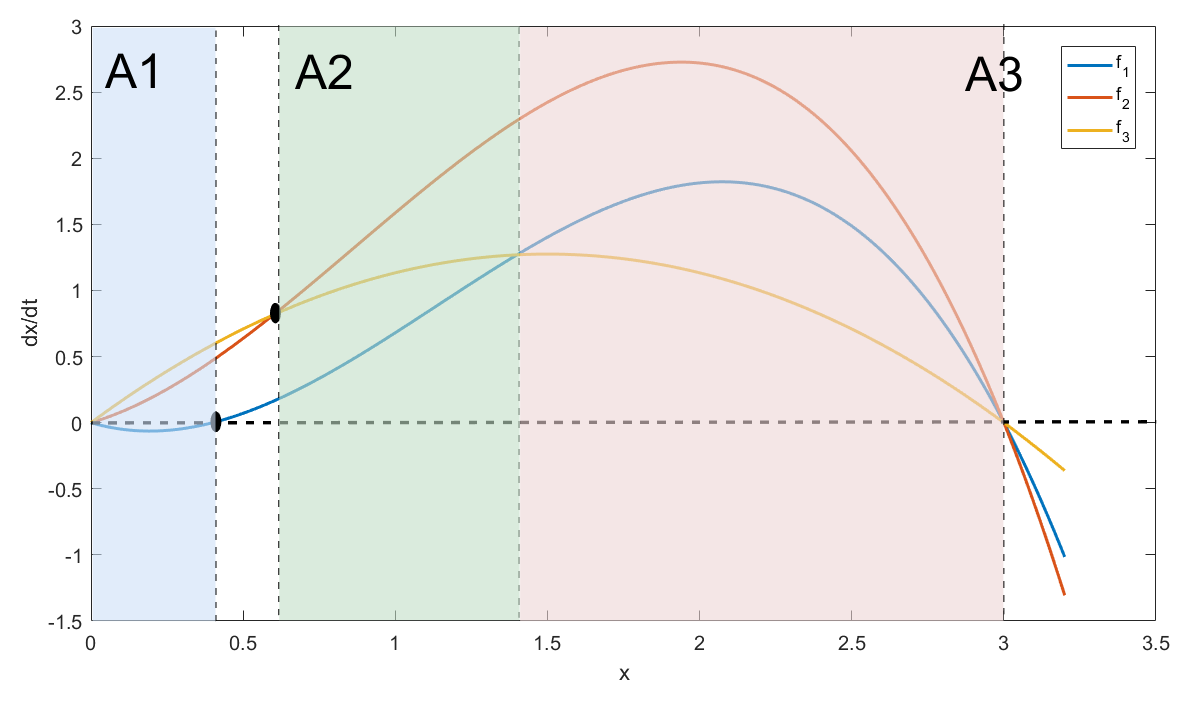}
    \caption{Comparison of prey growth functions: strong Allee effect (blue), weak Allee effect (red), and logistic growth (yellow). The strong Allee effect results in negative growth in region \( A_1 \). For the same intrinsic growth rate \( r_1 \), the weak Allee effect consistently shows higher growth than the strong Allee effect. In regions \( A_2 \) and \( A_3 \), the weak Allee effect surpasses logistic growth, and in region \( A_3 \), the strong Allee effect also exceeds logistic growth.}
    \label{figure9}
  \end{center}
\end{figure}

A growing body of literature has explored the role of the Allee effect in predator–prey models. Sun~\cite{sun2016mathematical} developed a general framework for its inclusion. Morozov et al.~\cite{morozov2004bifurcations} studied bifurcations and chaotic dynamics, while Terry~\cite{terry2015predator} explored its influence on predator reproduction. Other works have addressed stability and bifurcation behavior in such models~\cite{vishwakarma2021role, aguirre2013stochastic}.

In response to reduced prey encounter rates due to prey aggregation, many predators have evolved cooperative hunting strategies. This behavior increases foraging efficiency, allows targeting of larger prey, and reduces predation risk~\cite{sahoo2021impact, sahoo2023modeling}. To mathematically incorporate this, Alves and Hilker~\cite{alves2017hunting} proposed a linear functional response \( g(y) = \lambda + a y \). Later, Dey et al.~\cite{dey2022bifurcation} introduced a more refined form with saturating cooperation:
\begin{equation}\label{lambda(y)}
\lambda(y) = \lambda_0 + \frac{a y}{b + y},
\end{equation}
where \( \lambda_0 \) is the base encounter rate, \( a \) is the maximum cooperative gain, and \( b \) is the half-saturation constant.

In this paper, we propose a novel predator–prey model that integrates both the Allee effect and cooperative hunting \eqref{lambda(y)}:
\begin{equation}\label{equ02}
\begin{cases}
\begin{aligned}
\frac{dx(t)}{dt} &= r_1 x(t) \left(1 - \frac{x(t)}{k_1} \right)(x(t) - k_0) - \frac{(\lambda + A y(t)) x(t) y(t)}{b + y(t) + h x(t)(\lambda + A y(t))}, \\
\frac{dy(t)}{dt} &= \frac{(\lambda + A y(t)) x(t) y(t)}{b + y(t) + h x(t)(\lambda + A y(t))} - s y(t),
\end{aligned}
\end{cases}
\end{equation}
with initial conditions \( (x(0), y(0)) \geq 0 \). Here, \( r_1 > 0 \), \( k_0 \in \mathbb{R} \), and \( k_1 > \max\{k_0, 0\} \) are prey-specific parameters. The parameters \( \lambda = \lambda_0 b > 0 \) and \( A = \lambda_0 + a > 0 \) describe cooperative hunting. The remaining constants \( b, s, h > 0 \) retain their biological interpretations from~\eqref{equ01}. The model exhibits a weak Allee effect when \( k_0 < 0 \) and a strong Allee effect when \( k_0 > 0 \).

We focus on analyzing the dynamical behavior of this model. We first establish the existence and boundedness of global solutions and characterize the system’s equilibria. We then investigate their local and global stability and conduct a detailed bifurcation analysis—covering transcritical, saddle-node, Hopf, and heteroclinic bifurcations. These bifurcations elucidate how varying parameters such as \( \lambda \) impact system behavior. For instance, near a transcritical bifurcation, the boundary equilibrium may change from a saddle to a stable node while a new saddle appears nearby, indicating that small shifts in parameters can induce significant qualitative changes in long-term population dynamics.

The structure of the paper is as follows. In Section~\ref{sec2}, we prove the existence and boundedness of global solutions and discuss the existence of interior equilibria. Section~\ref{sec3} focuses on the stability of equilibria, including global stability conditions. Section~\ref{sec4} presents a comprehensive bifurcation analysis, covering transcritical, saddle-node, Hopf, and heteroclinic bifurcations. Finally, Section~\ref{sec9} provides concluding remarks.

\section{Existence of global solutions and equilibria}\label{sec2}

In this section, we prove the existence and boundedness of global positive solutions for system \eqref{equ02} and investigates conditions for the existence of interior equilibria. First, we show the existence and uniqueness of global positive solutions in this mathematical model.

\begin{theorem}
For any non-negative initial values $(x_0,y_0) \in \overline {\mathbb{R}^2_+}$, where $ \mathbb{R}^2_+=\{(x,y)\mid x, y> 0\}$, there exists a unique global positive solution $(x(t), y(t))$ to system \eqref{equ02} starting from  $(x(0),y(0))=(x_0,y_0)$. 
\end{theorem}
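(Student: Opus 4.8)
The plan is to combine the standard Picard–Lindel\"of local theory with a positive-invariance argument and a priori bounds that rule out finite-time blow-up. First I would verify that the vector field on the right-hand side of \eqref{equ02} is continuously differentiable on an open neighborhood of $\overline{\mathbb{R}^2_+}$. The only possible source of singularity is the denominator $b + y + hx(\lambda + Ay)$ of the functional response, so the key observation is that since $b>0$, $h>0$, and $\lambda + Ay \geq \lambda > 0$ for every $y \geq 0$, this denominator is bounded below by $b>0$ on the closed quadrant. Hence the right-hand side is smooth there, and Picard–Lindel\"of yields a unique solution on a maximal interval $[0,T_{\max})$ with $T_{\max}\in(0,\infty]$.

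Next I would establish positive invariance of $\overline{\mathbb{R}^2_+}$. Both the logistic–Allee term and the predation term in $\frac{dx}{dt}$ carry a factor of $x$, so $\frac{dx}{dt}=0$ whenever $x=0$; likewise $y$ is a common factor in $\frac{dy}{dt}$, so $\frac{dy}{dt}=0$ whenever $y=0$. Therefore each coordinate axis is an invariant set, and by uniqueness no trajectory can cross it: a solution starting in the closed quadrant remains there, and one starting in the open quadrant stays strictly positive, which justifies the term ``positive solution.''

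The crux is the a priori bound. From the first equation, dropping the nonnegative predation term gives $\frac{dx}{dt} \leq r_1 x\left(1 - \frac{x}{k_1}\right)(x - k_0)$, whose right side is negative once $x > k_1$; a comparison argument then yields $x(t) \leq \max\{x_0, k_1\}$ for all $t$. For the predator I would exploit the exact cancellation of the two functional-response terms in the sum: setting $W = x + y$,
\[
\frac{dW}{dt} + sW = r_1 x\Big(1 - \frac{x}{k_1}\Big)(x - k_0) + sx =: \phi(x),
\]
and since $\phi$ is a cubic in $x$ with leading coefficient $-r_1/k_1 < 0$, it attains a finite maximum $L$ on $[0,\infty)$. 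A Gronwall estimate then gives $W(t) \leq \max\{x_0 + y_0,\, L/s\}$, so $x$ and $y$ are bounded uniformly in $t$.

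Finally, boundedness closes the argument: if $T_{\max}$ were finite, the solution would have to exit every compact subset of $\overline{\mathbb{R}^2_+}$, contradicting the uniform bounds just obtained; hence $T_{\max}=\infty$ and the solution is global. I expect the main obstacle to be the predator bound, where choosing the right combination $W = x + y$ (so that the shared functional-response term cancels and leaves a coercive cubic in $x$) is what makes the estimate close cleanly. The local-existence and positivity steps are routine once the denominator is seen to be bounded away from zero.
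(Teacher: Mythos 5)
Your proposal is correct. The local-existence step (denominator bounded below by $b>0$, hence a locally Lipschitz vector field and Picard--Lindel\"of) and the positivity step (the axes are invariant because $x$ divides $\dot x$ and $y$ divides $\dot y$, so by uniqueness a trajectory starting in the open quadrant cannot reach either axis in finite time) are essentially identical to the paper's argument, which phrases the same idea via the hitting times $\tau_x,\tau_y$ and a backward-uniqueness contradiction. Where you genuinely diverge is in ruling out finite-time blow-up: the paper uses the quadratic Lyapunov function $V=\tfrac12(x^2+y^2)$, bounds $\dot V\le CV$ with $C=2\max\{L_1,L_2\}$, and concludes via Gronwall that $V$ stays finite on any finite interval --- a soft estimate that excludes explosion but gives no uniform bound. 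You instead prove an actual a priori bound: $x(t)\le\max\{x_0,k_1\}$ by comparison with the pure Allee equation, and then $W=x+y$ satisfies $\dot W+sW=\phi(x)$ with $\phi$ a coercive cubic, so $W$ is uniformly bounded. This is precisely the content of the paper's separate boundedness result (Theorem~\ref{thm02}), so your route effectively merges the two theorems: it is slightly more work up front but yields the stronger conclusion (uniform boundedness, not just non-explosion) in one pass, and it avoids having to estimate the quantity $L_2$, whose statement in the paper contains a small typo ($\lambda+Ax$ in the numerator where the term being bounded is $(\lambda+Ay)x$). Both arguments are valid; the paper's is marginally quicker if one only wants global existence.
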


\begin{proof}
Since the right-hand side of system \eqref{equ02} is locally Lipschitz continuous in a neighborhood of $(x_0, y_0)$, there exists a unique local solution $(x(t), y(t))$ on the maximal interval $[0, \tau)$, where either $\tau = \infty$ or $\tau > 0$ is an explosion time. That is,  
\[
\lim_{t \to \tau} (|x(t)| + |y(t)|) = \infty \quad \text{or} \quad \lim_{t \to \tau} [b + y(t) + hx(t)(\lambda + Ay(t))] = 0.
\]  
We consider two cases.  

\textbf{Case 1:} $x_0 = 0$ or $y_0 = 0$.  

By uniqueness, if $x_0 = 0$, then $x(t) = 0$ for all $t \in [0, \tau)$, and similarly, if $y_0 = 0$, then $y(t) = 0$ for all $t \in [0, \tau)$. In these cases, system \eqref{equ02} reduces to a single equation, which has a global non-negative solution. Therefore, $\tau = \infty$.  

\textbf{Case 2:} $x_0, y_0 > 0$.  

Define the times 
\[
\tau_x = \inf \{ t \in [0, \tau) \mid x(t) = 0 \}, \quad \tau_y = \inf \{ t \in [0, \tau) \mid y(t) = 0 \},
\]  
with the convention that $\inf \varnothing = \tau$.  

We claim that $\tau_x = \tau_y = \tau$. Suppose, for contradiction, that $\tau > \tau_x$. Consider the Cauchy problem:  
\[
\begin{cases}
\frac{d x(t)}{d t} = r_1 x(t) \left( 1 - \frac{x(t)}{k_1} \right) (x(t) - k_0) - \frac{(\lambda + A y(t)) x(t) y(t)}{b + y(t) + h x(t)(\lambda + A y(t))}, \\
x(\tau_x) = 0.
\end{cases}
\]  
By uniqueness, the solution must satisfy $x(t) \equiv 0$ for all $t$ in $[\tau_x, \tau)$ and $[0,\tau_x)$. However, this contradicts our assumption that $x(0) > 0$. Thus, $\tau = \tau_x$. A similar argument shows $\tau = \tau_y$.  

Next, we show that $\tau = \infty$. Suppose, for contradiction, that $\tau$ is finite. From the definition of $\tau$, $\tau_x$, and $\tau_y$, we have
\begin{equation}  \label{tau_finite}
    \lim_{t \to \tau} (|x(t)| + |y(t)|) = \infty.
\end{equation}
Define the Lyapunov function  
\begin{equation*}\label{equ05}
    V(x, y) = \frac{1}{2} (x^2 + y^2).
\end{equation*}
Using system \eqref{equ02}, we obtain  
\begin{align*}
\frac{dV(x(t), y(t))}{dt} &= x\frac{d x}{dt} + y\frac{dy}{dt}\\
&\leq r_1 x^2 \left( 1 - \frac{x}{k_1} \right) (x - k_0) + y \left[ \frac{(\lambda + A y) x y}{b + y + h x (\lambda + A y)} - s y \right] \\
&\leq r_1 x^2 \left( 1 - \frac{x}{k_1} \right) (x - k_0) + \frac{(\lambda + A y) x y^2}{b + y + h x (\lambda + A y)}.
\end{align*}  
Define constants  
\[
L_1 = \max_{x > 0} r_1 \left( 1 - \frac{x}{k_1} \right) (x - k_0), \quad
L_2 = \max_{x, y > 0} \frac{\lambda + A x}{b + y + h x (\lambda + A y)}.
\]  
Then, for every $t \in [0, \tau]$,  
\[
\frac{dV(x(t), y(t))}{dt} \leq L_1 x^2 + L_2 y^2 \leq \max \{ L_1, L_2 \} (x^2 + y^2) = 2 \max \{ L_1, L_2 \} V(x, y).
\]  

By Gronwall’s inequality,  
\[
V(x(t), y(t)) \leq V(x_0, y_0) e^{2 \max \{ L_1, L_2 \} \tau} < \infty, \quad \text{for all } t \in [0, \tau].
\]  
However, from \eqref{tau_finite}, we have   
\[
\lim_{t \to \tau} V(x(t), y(t)) = \infty.
\]  
This contradiction implies that $\tau = \infty$, completing the proof.  
\end{proof}

Second, we prove the boundedness of solutions.
\begin{theorem}\label{thm02}
    Let \((x(t),y(t))\) be the solution of system \eqref{equ02} with initial condition \((x_0,y_0) \in \mathbb{R}_+^2\). Then, the solution remains bounded for all \( t \in [0,\infty) \).
\end{theorem}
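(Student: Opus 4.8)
The plan is to exploit the structure of \eqref{equ02}: although each equation individually carries the awkward saturating predation term, that term appears with opposite signs in the two equations and therefore cancels when we track the total population $W(t) = x(t) + y(t)$. First I would invoke the preceding global-existence theorem to record that the solution stays in the open first quadrant, so $x(t), y(t) > 0$ for all $t \ge 0$; this positivity is what lets me discard the (non-negative) predation term when estimating the prey equation.

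Before the main estimate I would note an \emph{a priori} bound on the prey alone. Since the predation loss is non-negative, $\frac{dx}{dt} \le r_1 x\left(1 - \frac{x}{k_1}\right)(x - k_0) =: g(x)$, and $g(x) < 0$ whenever $x > k_1$: there the factor $1 - x/k_1$ is negative while $x$ and $x - k_0$ are positive, because $k_1 > \max\{k_0, 0\} \ge k_0$. A standard comparison argument then gives $x(t) \le \max\{x_0, k_1\}$ for all $t \ge 0$, and in particular $\limsup_{t\to\infty} x(t) \le k_1$. This sharper prey bound is a useful bonus, though it is not strictly needed for what follows.

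The key step is to add the two equations of \eqref{equ02}: the predation terms cancel exactly, leaving
\begin{equation*}
\frac{dW}{dt} = r_1 x\left(1 - \frac{x}{k_1}\right)(x - k_0) - s y.
\end{equation*}
Adding $sW$ to both sides then yields $\frac{dW}{dt} + sW = r_1 x\left(1 - \frac{x}{k_1}\right)(x - k_0) + s x =: \varphi(x)$, a function of $x$ alone. I would observe that $\varphi$ is a cubic in $x$ with negative leading coefficient $-r_1/k_1$, hence bounded above on $[0,\infty)$; setting $C := \max_{x \ge 0}\varphi(x) < \infty$ gives the differential inequality $\frac{dW}{dt} \le C - sW$. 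Gronwall's inequality (equivalently, direct integration of the linear comparison ODE) then produces $W(t) \le \max\{W(0),\, C/s\}$ for all $t \ge 0$.

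Finally, since $x, y \ge 0$, boundedness of $W = x + y$ immediately gives boundedness of both $x$ and $y$, completing the proof. The only genuine subtlety — and the step I expect to require the most care — is verifying the exact cancellation of the predation terms and confirming that the resulting one-variable function $\varphi$ is bounded above; it is precisely the negative cubic leading term that makes the comparison ODE dissipative and furnishes the uniform bound $C/s$ on the total population. Everything else reduces to routine comparison and Gronwall estimates.
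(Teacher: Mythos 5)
Your proposal is correct and follows essentially the same route as the paper's proof: bound the prey by comparison with the Allee-effect ODE, add the two equations so the predation terms cancel, and close a dissipative linear inequality for $x+y$ via Gronwall. The only difference is cosmetic: by noting that $\varphi(x)=r_1x\left(1-\frac{x}{k_1}\right)(x-k_0)+sx$ is a cubic with negative leading coefficient and hence bounded above on $[0,\infty)$, you handle both signs of $k_0$ at once, whereas the paper splits into the strong and weak Allee cases and uses the prey bound $x(t)\le\max\{k_1,x_0\}$ inside the estimate.
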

\begin{proof}
Consider the auxiliary system:
\begin{equation}\label{equ06}
\left\{\begin{aligned}
&\frac{d \bar{x}(t)}{d t}=r_1\bar{x}(t)\big(1-\frac{\bar{x}(t)}{k_1}\big)\big(\bar{x}(t)-k_0 \big), \\
& \bar{x}(0) = x_0.
\end{aligned}\right.
\end{equation}
Since \( x(t) \) and \( y(t) \) remain positive, applying the comparison theorem yields \( x(t) \leq \bar{x}(t) \) for all \( t \in [0, \infty) \). Moreover, it is evident that \( \bar{x}(t) \leq \max\{k_1, x_0\} \), leading to the bound:
$$x(t) \leq \max\{k_1, x_0\}.$$

Define \( S(t) = x(t) + y(t) \), so that \( y(t) \leq S(t) \). From system \eqref{equ02}, we analyze the strong Allee effect case (i.e. $k_0>0$):
\begin{align*}
    \frac{d S(t)}{d t} &= r_1 x(t) \left( 1 - \frac{x(t)}{k_1} \right) \left( x(t) - k_0 \right) - s y(t) \\
    &= r_1 x^2(t) - \frac{r_1}{k_1} x^3(t) - r_1 k_0 x(t) + \frac{r_1}{k_1} k_0 x^2(t) - s y(t) \\
    &\leq \left( r_1 + \frac{r_1}{k_1} k_0 \right) {\max}^2\{k_1, x_0\} - \min\{r_1 k_0, s\} S(t).
\end{align*}
Applying Gronwall’s inequality confirms the boundedness of $S(t)$ (and therefore \( y(t) \)) under the strong Allee effect.

For the weak Allee effect case (i.e. $k_0<0$), we have:
\begin{align*}
    \frac{d S(t)}{d t} &= r_1 x^2(t) - \frac{r_1}{k_1} x^3(t) - r_1 k_0 x(t) + \frac{r_1}{k_1} k_0 x^2(t) - s y(t) \\
    &\leq r_1 x^2(t) - r_1 k_0 x(t) - s x(t) + s x(t) + \frac{r_1}{k_1} k_0 x^2(t) - s y(t) \\
    &\leq r_1 {\max}^2\{k_1, x_0\} + (s - r_1 k_0) \max\{k_1, x_0\} - s S(t).
\end{align*}
Thus, \( S(t) \) remains bounded, implying that \( y(t) \) is also bounded. The proof is complete.
\end{proof}

In Theorem \ref{thm02}, we established the boundedness of solutions depending on the initial conditions. The following theorem strengthens this result by showing that, after a sufficiently large time, the solutions remain bounded by constants that are independent of the initial values.

For $\delta\geq 0$, define the set 
\begin{equation} \label{B_delta}
{\mathbf B^\delta}=(0, k_1+\delta] \times (0, k_1+\delta].
\end{equation}
\begin{theorem}\label{lem08}
Let \( (x(t), y(t)) \) be the solution to system~\eqref{equ02} with any initial condition \( (x_0, y_0) \in \mathbb{R}_+^2 \). Then,
\[
\left(\limsup_{t \to \infty} x(t), \limsup_{t \to \infty} y(t)\right) \in \mathbf{B^0}.
\]

In addition, \( \mathbf{B^\delta}\, (\delta>0) \) is a positively invariant set to system~\eqref{equ02}.
\end{theorem}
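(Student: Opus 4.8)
The plan is to decouple the two coordinates, establish one–dimensional upper bounds, and then glue these estimates into the two–dimensional invariance statement. Positivity of $x(t)$ and $y(t)$ is already guaranteed by the global existence theorem, so throughout I work in the open first quadrant and only need to control the upper bounds, which constitute the substantive content. For the prey I would reuse the comparison function $\bar x$ from the proof of \reft{thm02}: since the predation term is non-negative, $\dot x \le r_1 x\big(1-\tfrac{x}{k_1}\big)(x-k_0)$, and a phase-line analysis of the scalar logistic--Allee equation \eqref{equ06} shows that its equilibria are $0$, $k_0$ (relevant only when $k_0>0$), and $k_1$, with every positive orbit satisfying $\limsup_{t\to\infty}\bar x(t)\le k_1$. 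The comparison theorem then yields $\limsup_{t\to\infty} x(t)\le k_1$. The same differential inequality handles the prey direction of invariance: on the right edge $\{x=k_1+\delta\}$ one has $1-\tfrac{k_1+\delta}{k_1}<0$ and $k_1+\delta-k_0>0$, so $\dot x<0$ there, and no orbit starting in $\mathbf B^\delta$ can cross this edge.

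For the predator I would pass to the total population $S(t)=x(t)+y(t)$, exploiting that the functional-response terms cancel in the sum:
\[
\frac{dS}{dt}=r_1 x\Big(1-\frac{x}{k_1}\Big)(x-k_0)-sy = \Phi(x)-sS, \qquad \Phi(x):=r_1 x\Big(1-\frac{x}{k_1}\Big)(x-k_0)+sx.
\]
Once $\limsup_{t\to\infty}x(t)\le k_1$ is known, for any $\epsilon>0$ the prey is eventually confined to $(0,k_1+\epsilon]$, so $\Phi(x)$ is eventually dominated by its maximum over that interval; Gronwall's inequality applied to $\dot S+sS\le \mathrm{const}$ then bounds $\limsup_{t\to\infty}S(t)$, and hence $\limsup_{t\to\infty}y(t)$.

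The hard part will be upgrading this crude $S$–estimate to the sharp predator bound $\limsup_{t\to\infty}y(t)\le k_1$ required for membership in $\mathbf B^0$, and equivalently to the inward-pointing condition on the top edge $\{y=k_1+\delta\}$. The key is to combine the prey confinement $x\le k_1+\delta$ with the monotonicity in $x$ of the functional response $\frac{(\lambda+Ay)x}{b+y+hx(\lambda+Ay)}$: writing $\dot y = y\big[\frac{(\lambda+Ay)x}{b+y+hx(\lambda+Ay)}-s\big]$ and substituting the largest admissible value $x=k_1+\delta$, one must verify that the resulting bracket is non-positive when $y=k_1+\delta$. This sign verification is the delicate, computation-heavy crux of the theorem, and it is precisely where the specific structure of the cooperative functional response has to be exploited rather than the generic saturation bound $\tfrac{1}{h}$ on the response.

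Finally I would assemble the invariance of $\mathbf B^\delta$: the inward-pointing field on the edges $\{x=k_1+\delta\}$ and $\{y=k_1+\delta\}$, together with positivity (which prevents orbits from reaching the coordinate axes $\{x=0\}$ and $\{y=0\}$), shows that no orbit starting in $\mathbf B^\delta$ can leave it. For the $\limsup$ membership in $\mathbf B^0$, which concerns arbitrary initial data $(x_0,y_0)\in\mathbb{R}_+^2$ possibly outside the box, I would argue that the same sign estimate forces $\dot y<0$ once $y$ exceeds the threshold, so every orbit enters and is thereafter confined near $y\le k_1$; combined with the prey comparison bound of the first step, this gives $\big(\limsup_{t\to\infty}x(t),\limsup_{t\to\infty}y(t)\big)\in\mathbf B^0$, either directly or by letting $\delta\downarrow 0$.
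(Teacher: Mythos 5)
Your plan has a genuine gap at exactly the step you defer as the ``delicate, computation-heavy crux'': the sign verification on the top edge is not a pending computation, it is \emph{false} for a large open set of admissible parameters. Since the functional response satisfies
\[
\frac{(\lambda+Ay)x}{b+y+hx(\lambda+Ay)}\;\longrightarrow\;\frac{1}{h}\qquad\text{as }\lambda\to\infty\ \text{with }x,y\text{ fixed},
\]
the bracket in $\dot y=y\bigl[\tfrac{(\lambda+Ay)x}{b+y+hx(\lambda+Ay)}-s\bigr]$ evaluated at $x=y=k_1+\delta$ tends to $\tfrac1h-s$, which is strictly positive whenever $sh<1$ --- precisely the standing hypothesis of Theorems~\ref{two_equilibria} and~\ref{thm01}. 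So for $\lambda$ large the field points \emph{outward} through the edge $\{y=k_1+\delta\}$, and your inward-pointing argument for the predator direction (and hence your route to both the invariance of $\mathbf B^\delta$ and the bound $\limsup_{t\to\infty}y(t)\le k_1$) cannot be completed without additional parameter restrictions that the theorem does not impose. Your fallback Gronwall estimate only gives $\limsup_{t\to\infty}S(t)\le s^{-1}\max_{[0,k_1]}\Phi$, and since $\Phi(x)/s=g^{(1)}(x)+x$ this bound strictly exceeds $k_1$, so it does not yield membership in $\mathbf B^0$ either. The prey half of your argument ($\limsup_{t\to\infty}x(t)\le k_1$ and $\dot x<0$ on $\{x=k_1+\delta\}$) is correct but is only half the statement.

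The paper takes a different route: it never examines the two edges separately, but compares the \emph{sum} $S=x+y$ with the solution $\bar x$ of the scalar Allee equation~\eqref{equ04} started from $\bar x(0)=x(0)+y(0)$, shows $\bar x(t)\downarrow k_1$ whenever $\bar x(0)>k_1$, and concludes $\limsup_{t\to\infty}S(t)\le k_1$, which bounds $x$ and $y$ simultaneously. You should note, however, that this hinges on the pointwise differential inequality
\[
r_1x\Bigl(1-\tfrac{x}{k_1}\Bigr)(x-k_0)-sy\;\le\;r_1(x+y)\Bigl(1-\tfrac{x+y}{k_1}\Bigr)(x+y-k_0),
\]
which the paper does not verify and which fails, for instance, near $x=k_1$ with $y$ small when $s<r_1(k_1-k_0)$. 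So the obstruction you ran into is real and is not transparently resolved by the paper's comparison either; indeed the claim $\limsup_{t\to\infty}y(t)\le k_1$ sits uneasily with the paper's own later use of the region $\mathbf D^0$, whose vertical extent $\max\{k_1,\max_z g^{(1)}(z)\}$ is introduced precisely because an interior equilibrium can have $y^*=g^{(1)}(x^*)>k_1$. If you want a provable version of this result, you should either impose the parameter condition $(\lambda+A(k_1+\delta))(k_1+\delta)(1-sh)\le s(b+k_1+\delta)$ that makes your top-edge computation go through, or replace the box $\mathbf B^\delta$ by a region whose upper boundary for $y$ accounts for $\max_z g^{(1)}(z)$.
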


\begin{proof}
Similarly to the proof of Theorem~\ref{thm02}, define \( S(t) = x(t) + y(t) \), and observe that
\[
S(t) \leq \bar{x}(t), \quad \text{for all } t \geq 0,
\]
where \( \bar{x}(t) \) is the solution to the differential equation:
\begin{equation}\label{equ04}
\left\{
\begin{aligned}
&\frac{d \bar{x}(t)}{d t} = r_1 \bar{x}(t)\left(1 - \frac{\bar{x}(t)}{k_1}\right)\left(\bar{x}(t) - k_0 \right), \\
&\bar{x}(0) = x(0) + y(0).
\end{aligned}
\right.
\end{equation}

It is clear that
\[
\bar{x}(t) \leq \max\{k_1, \bar{x}(0)\}, \quad \text{for all } t \geq 0.
\]
Therefore, if the initial value satisfies \( \bar{x}(0) = x(0) + y(0) \leq k_1 \), the conclusion of the theorem immediately holds.

Now consider the case \( \bar{x}(0) > k_1 \). Since
\[
\frac{d \bar{x}(t)}{d t} = r_1 \bar{x}(t)\left(1 - \frac{\bar{x}(t)}{k_1}\right)\left(\bar{x}(t) - k_0 \right) < 0, \quad \text{whenever } \bar{x}(t) > k_1,
\]
and \( \bar{x}(t) = k_1 \) is an equilibrium of equation~\eqref{equ04}, it follows that
\[
\bar{x}(t) \in (k_1, \bar{x}(0)], \quad \text{for all } t \geq 0,
\]
and \( \bar{x}(t) \) is a decreasing function of \( t \). This implies that \( \mathbf{B^\delta} \) is a positively invariant set to system~\eqref{equ02}.

Let \( \ell = \lim_{t \to \infty} \bar{x}(t) \in [k_1, \bar{x}(0)] \). We claim that \( \ell = k_1 \). Suppose, for contradiction, that \( \ell > k_1 \). Then
\[
\lim_{t \to \infty} \frac{d \bar{x}(t)}{d t} = -\varepsilon,
\]
where \( \varepsilon = r_1 \ell \left( \frac{\ell}{k_1} - 1 \right)(\ell - k_0) > 0 \).
Hence, there exists \( t_0 > 0 \) such that
\[
\frac{d \bar{x}(t)}{d t} \leq -\frac{\varepsilon}{2}, \quad \text{for all } t \geq t_0.
\]
This implies
\[
\int_{t_0}^{\infty} \frac{d \bar{x}(t)}{d t} \, dt \leq -\int_{t_0}^{\infty} \frac{\varepsilon}{2} \, dt = -\infty,
\]
which contradicts the existence of the finite limit \( \ell \). Therefore, our assumption is false and it must be that \( \ell = k_1 \).

Since \( S(t) \leq \bar{x}(t) \), it follows that
\[
\limsup_{t \to \infty} S(t) \leq \lim_{t\to\infty} \bar{x}(t) = k_1.
\]
  Hence,
\[
\left( \limsup_{t \to \infty} x(t), \limsup_{t \to \infty} y(t) \right) \in \mathbf{B^0}.
\]
This completes the proof.
\end{proof}

Finally, we analyze the existence and number of interior equilibria for system \eqref{equ02} by solving the following algebraic system:
\begin{equation} \label{equ08}
\begin{cases}
f^{(1)}(x,y) = 0, \\
f^{(2)}(x,y) = 0,
\end{cases}
\end{equation}
where $f^{(1)}(x,y) $ and $f^{(2)}(x,y)$ are defined as 
\begin{equation}\label{equ07}
\left\{\begin{aligned}
f^{(1)}(x,y) &= r_1 \left(1 - \frac{x}{k_1} \right) (x - k_0) - \frac{(\lambda + A y)y}{b + y + h x (\lambda + A y)}, \\
f^{(2)}(x,y) &= \frac{(\lambda + A y)x}{b + y + h x (\lambda + A y)} - s.
\end{aligned}\right.
\end{equation}

From the second equation in \eqref{equ08}, we solve for \( y \):
\begin{equation} \label{TNB1}
\frac{\lambda + A y}{b + y + h x (\lambda + A y)} = \frac{s}{x},
\end{equation}
which simplifies to
\begin{align}\label{equ22}
    y = g^{(2)}(x), \quad \text{where} \quad g^{(2)}(x) = \frac{(\lambda - s h \lambda)x - s b}{s + A(sh - 1)x}= \frac{\lambda}{A}\left[-1 + \frac{s(1-\frac{A}{\lambda}b)}{A(sh-1)x+s}\right].
\end{align}

Substituting \eqref{TNB1} into the first equation of \eqref{equ08} gives
\begin{align}\label{equ21}
     y = g^{(1)}(x), \quad \text{where} \quad g^{(1)}(x) = \frac{r_1}{s} x \left(1 - \frac{x}{k_1} \right) (x - k_0).
\end{align}
Thus, the interior equilibria of system \eqref{equ02} correspond to the intersection points of the curves \( y = g^{(1)}(x) \) and \( y = g^{(2)}(x) \) in the positive quadrant \( \mathbb{R}_+^2 \). Figure \ref{figure1} presents the graphical representations of these functions.

\begin{figure}[H]
  \begin{center} 
    \includegraphics[scale=0.35]{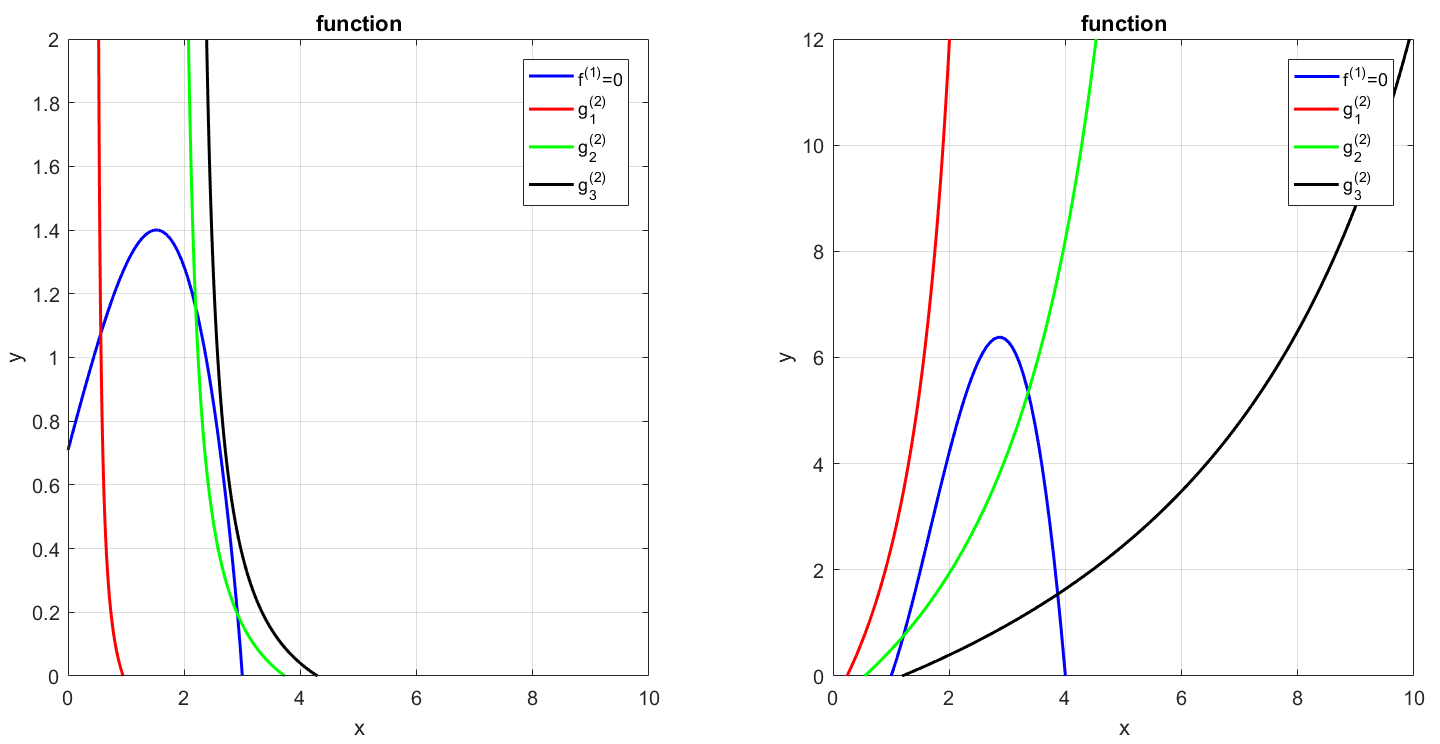}
    \caption{Graphs of $f^{(1)}(x,y)=0$ (blue curve, independent of $s$) and $g^{(2)}(x)$, together with their intersection points. 
Left panel (weak Allee effect): $r_1=0.45$, $k_1=3$, $k_0=-1$, $\lambda=0.2$, $A=0.8$, $b=0.5$, $h=0.7$, with $s = 0.3, 0.73, 0.78$. 
Right panel (strong Allee effect): $r_1=1.2$, $k_1=5$, $k_0=1$, $\lambda=1.2$, $A=0.2$, $b=0.5$, $h=0.9$, with $s = 0.38, 0.6, 0.8$. 
Here, $g_1^{(2)}, g_2^{(2)}, g_3^{(2)}$ denote the graphs of $g^{(2)}(x)$ corresponding to the three values of $s$ in each case. 
The intersections illustrate that the system admits 0, 1, or 2 interior equilibria, respectively.}
     \label{figure1}
  \end{center}
\end{figure}
 
The following theorem establishes the existence and number of interior equilibria, i.e., equilibrium points in \( \mathbb{R}_+^2 \), for system~\eqref{equ02}.

\begin{theorem}\label{two_equilibria}
    Suppose \(sh< 1\). If one of the following conditions holds:
    \begin{align}
        & k_0 > \tfrac{k_1}{2}, \label{2.4.1}\\
        & k_0 < 0, \ \lambda < Ab, \label{2.4.2}\\
        & k_0 < 0, \ \lambda > Ab, \ \tfrac{r_1}{s}\Bigl(1+\tfrac{k_0}{k_1}\Bigr) < \tfrac{A(\lambda-Ab)(1-sh)^2}{s^2}, \label{2.4.3}
    \end{align}
    then system~\eqref{equ02} admits at most two distinct interior equilibria in \( \mathbb{R}_+^2 \).
\end{theorem}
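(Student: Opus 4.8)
The plan is to reduce the problem to counting intersections of the graphs $y=g^{(1)}(x)$ and $y=g^{(2)}(x)$ and then to bound that count by a concavity/convexity comparison. Since an interior equilibrium must lie on $f^{(2)}=0$, i.e.\ on $y=g^{(2)}(x)$, and there $f^{(1)}=0$ forces $y=g^{(1)}(x)$, the interior equilibria are exactly the zeros of $\phi:=g^{(1)}-g^{(2)}$ in the open set where $g^{(2)}>0$ (equivalently $g^{(1)}>0$), each such zero $x$ producing the equilibrium $(x,g^{(2)}(x))$. Writing $c=1-sh$ (positive by the hypothesis $sh<1$), one may assume $\lambda\neq Ab$, since $\lambda=Ab$ gives $g^{(2)}\equiv-b<0$ and hence no interior equilibrium. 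I would first record the shape data. The function $g^{(2)}$ has a single pole at $x^{*}=s/(Ac)>0$ and a single zero at $x_{0}=sb/(\lambda c)$; since $g^{(2)\prime}(x)=cs(\lambda-Ab)(s-Acx)^{-2}$ has the constant sign of $\lambda-Ab$, and $g^{(2)\prime\prime}(x)=2Ac^{2}s(\lambda-Ab)(s-Acx)^{-3}$, a direct check shows that on whichever branch it is positive ($(x_{0},x^{*})$ if $\lambda>Ab$, $(x^{*},x_{0})$ if $\lambda<Ab$) the function $g^{(2)}$ is strictly convex. For the cubic, $g^{(1)\prime\prime}(x)=\tfrac{2r_{1}}{sk_{1}}(k_{1}+k_{0}-3x)$, so $g^{(1)}$ is concave precisely for $x>x_{I}:=(k_{1}+k_{0})/3$; its critical points satisfy $x_{-}<0<x_{I}<x_{+}$ in the weak-Allee case, so $g^{(1)}$ is increasing on $(0,x_{+})$.

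The unifying principle is that a concave minus a convex function is concave, and a concave function has at most two zeros on an interval. Case \eqref{2.4.1} is then the cleanest: for $k_{0}>k_{1}/2$ one has $x_{I}<k_{0}$, and since $g^{(1)}>0$ only on $(k_{0},k_{1})$, the whole interior region lies in $\{x>x_{I}\}$ where $g^{(1)}$ is concave; combined with the convexity of $g^{(2)}$ on its positive branch this makes $\phi$ concave on the single overlap interval, giving at most two zeros regardless of the sign of $\lambda-Ab$. Case \eqref{2.4.3} uses the same concavity but now on the entire branch $(x_{0},x^{*})\subset(0,x^{*})$: for $\lambda>Ab$ the term $g^{(2)\prime\prime}$ is increasing while $g^{(1)\prime\prime}$ is decreasing, so $\phi^{\prime\prime}$ is decreasing on $(0,x^{*})$; the hypothesis \eqref{2.4.3} is exactly $\phi^{\prime\prime}(0)<0$, because $\tfrac12 g^{(1)\prime\prime}(0)=\tfrac{r_{1}}{s}(1+k_{0}/k_{1})$ and $\tfrac12 g^{(2)\prime\prime}(0)=A(\lambda-Ab)(1-sh)^{2}/s^{2}$. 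Hence $\phi^{\prime\prime}<0$ throughout, $\phi$ is concave, and it has at most two zeros.

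The delicate case, which I expect to be the main obstacle, is \eqref{2.4.2}: here $\lambda<Ab$, the positive branch is $(x^{*},x_{0})$, and $x^{*}$ may lie below the inflection point $x_{I}$ (this already happens for the paper's left-panel parameters), so $g^{(1)}$ is convex on the left part of the branch and the ``concave minus convex'' argument fails there. The remedy is to split the branch at $x_{I}$. On $(x^{*},x_{I})$, when nonempty, one has $x<x_{I}<x_{+}$, so $g^{(1)}$ is strictly increasing while $g^{(2)}$ is strictly decreasing; thus $\phi$ is strictly increasing and has at most one zero. On $(x_{I},x_{0})$, $g^{(1)}$ is concave and $g^{(2)}$ convex, so $\phi$ is concave. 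I would glue the two pieces through the value $\phi(x_{I})$: since $\phi\to-\infty$ as $x\to x^{*+}$, a zero in $(x^{*},x_{I})$ forces $\phi(x_{I})>0$, and a concave function on $[x_{I},x_{0})$ positive at its left endpoint has at most one zero, for a total of two; if $\phi$ has no zero in $(x^{*},x_{I})$ then $\phi(x_{I})\le0$ and the concave piece alone supplies at most two. Either way $\phi$ has at most two zeros on the positive branch (and when $k_{1}+k_{0}\le0$, so $x_{I}\le0$, the whole branch is already concave and no split is needed). Collecting the three cases gives at most two interior equilibria.
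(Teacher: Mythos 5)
Your proposal is correct, and for the reduction (interior equilibria as zeros of $g^{(1)}-g^{(2)}$ on the set where $g^{(2)}>0$) and for case \eqref{2.4.1} it coincides with the paper's argument: $g^{(1)}$ is concave on $[k_0,k_1]$ precisely because $k_0>k_1/2$, $g^{(2)}$ is convex on its positive branch by \eqref{g2derivatives}, and a concave-minus-convex function has at most two zeros. Where you genuinely go beyond the paper is in the two cases the paper dismisses as ``analogous and omitted.'' For \eqref{2.4.3} you identify exactly what the hypothesis does: it is the statement $\phi''(0)<0$ for $\phi=g^{(1)}-g^{(2)}$, and since $g^{(1)\prime\prime}$ is a decreasing linear function while $g^{(2)\prime\prime}$ is increasing on $(0,s/(A(1-sh)))$ when $\lambda>Ab$, the inequality propagates to the whole branch, restoring concavity of $\phi$; this explains why the extra inequality in \eqref{2.4.3} is needed at all. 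For \eqref{2.4.2} you correctly observe that the case is \emph{not} analogous: with $k_0<0$ the inflection point $x_I=(k_1+k_0)/3$ of $g^{(1)}$ can lie inside the positive branch $(x^*,x_0)$ of $g^{(2)}$, so the convex/concave comparison fails on $(x^*,x_I)$, and your repair --- $\phi$ strictly increasing there (cubic increasing between its critical points $x_-<0<x_+$ with $x_I<x_+$, while $g^{(2)}$ is decreasing since $\lambda<Ab$), at most one zero on that piece, then concavity on $(x_I,x_0)$ glued through the sign of $\phi(x_I)$ using $\phi\to-\infty$ at the pole --- is sound and supplies a step the paper's proof sketch does not actually contain. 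The only caveat is cosmetic: when the domain is truncated by $k_1$ or when $x_I$ falls outside the branch, your argument restricts to a subinterval and the zero count can only decrease, so nothing breaks. In short, same skeleton as the paper, but your write-up is the more complete proof of the theorem as stated.
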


\begin{proof}
We first consider the case \eqref{2.4.1}; the arguments for \eqref{2.4.2} and \eqref{2.4.3} are analogous and are omitted.

Any interior equilibrium \( (x^*, y^*) \in \mathbb{R}_+^2 \) must satisfy
\begin{equation}\label{equorder}
    \begin{cases}
    y^* = g^{(1)}(x^*) = \dfrac{r_1}{s}x^*\!\left(1-\tfrac{x^*}{k_1}\right)(x^*-k_0) > 0, \\[1.2ex]
    y^* = g^{(2)}(x^*) = \dfrac{\lambda}{A} \left(-1 + \dfrac{s\bigl(1 - \tfrac{A}{\lambda}b\bigr)}{A(sh-1)x^*+s} \right) > 0.
    \end{cases}
\end{equation}
Hence \(x^*\) must lie in
\[
   \Delta := \bigl(\max(k_0,0),\, k_1\bigr) \cap \{x: g^{(2)}(x)>0\}.
\]

For \( g^{(1)}(x) \), we compute
\[
    \frac{d g^{(1)}(x)}{d x} = \frac{r_1}{s}x\!\left(1 - \frac{2x}{k_1} + \frac{k_0}{k_1}\right) 
    + \frac{r_1}{s}\!\left(1 - \frac{x}{k_1}\right)(x - k_0),
\]
and hence
\[
    \frac{d^2 g^{(1)}(x)}{d x^2} = \frac{2r_1}{s}\!\left(1+\frac{k_0}{k_1}\right) - \frac{6r_1}{sk_1}x.
\]
Therefore,
\[
   \frac{d^2 g^{(1)}(x)}{dx^2} < 0 \quad \text{for all } x \in [k_0, k_1],
\]
so \(g^{(1)}(x)\) is concave on \([k_0,k_1]\).

For \( g^{(2)}(x) \), we consider two cases:

- If \( \lambda - Ab \geq 0 \), then for all \(x \in \Delta\), \( A(sh - 1)x + s > 0 \) and 
\begin{equation}\label{g2derivatives}
\begin{aligned}
    \frac{d g^{(2)}(x)}{dx} &= \frac{s(\lambda - Ab)(1 - sh)}{\bigl[A(sh - 1)x + s\bigr]^2} \geq 0, \\
    \frac{d^2 g^{(2)}(x)}{dx^2} &= \frac{2As(\lambda - Ab)(1 - sh)^2}{\bigl[A(sh - 1)x + s\bigr]^3} \geq 0.
\end{aligned}
\end{equation}

- If \( \lambda - Ab < 0 \), then by \eqref{g2derivatives}, for all \(x \in \Delta\),
\[
   \frac{d g^{(2)}(x)}{dx} < 0, \qquad 
   \frac{d^2 g^{(2)}(x)}{dx^2} > 0.
\]
Thus, in either case, \( g^{(2)}(x) \) is convex on \( \Delta \).

Now define \( g(x) := g^{(2)}(x) - g^{(1)}(x) \). Since \( g^{(1)} \) is concave and \( g^{(2)} \) is convex on \( \Delta \), the function \( g(x) \) can have at most one critical point in \( \Delta \). Consequently, \( g(x) \) admits at most two zeros in \( \Delta \). Hence, system~\eqref{equ02} possesses at most two interior equilibria.

    This completes the proof.
\end{proof}

\section{Stability of Equilibria}\label{sec3}

In this section, we investigate the stability properties of the equilibria for system \eqref{equ02}. By analyzing the Jacobian matrix evaluated at each boundary and interior equilibrium, we determine their stability characteristics. Additionally, we derive sufficient conditions for the global stability of the system and explore scenarios leading to population decline.

The Jacobian matrix of system \eqref{equ02} at an equilibrium point \(E(x, y)\) is given by:
\begin{equation} \label{eqj(e)}
J(E) =
\begin{pmatrix}
\frac{\partial f_1}{\partial x} & \frac{\partial f_1}{\partial y} \\
\frac{\partial f_2}{\partial x} & \frac{\partial f_2}{\partial y}
\end{pmatrix},
\end{equation}
where
\begin{equation} \label{eq_f1f1}
\begin{aligned}
    f_1(x,y) &= x f^{(1)}(x,y) = r_1x\left(1 - \frac{x}{k_1}\right)(x - k_0) - \frac{(\lambda+Ay)xy}{b+y+hx(\lambda+Ay)}, \\
    f_2(x,y) &= y f^{(2)}(x,y) = \frac{(\lambda+Ay)xy}{b+y+hx(\lambda+Ay)} - sy.
\end{aligned}
\end{equation}

The first and second partial  derivatives of \( f_1 \) and \( f_2 \) are given by:
\begin{align}
    \frac{\partial f_1}{\partial x} &= r_1(1 - \frac{x}{k_1})(x - k_0) + r_1x\left(1 + \frac{k_0}{k_1} - \frac{2x}{k_1} \right) - \frac{y(b+y)(\lambda+Ay)}{[b+y+hx(\lambda+Ay)]^2}, \label{f1x}  \\
    \frac{\partial f_2}{\partial x} &= \frac{y(b+y)(\lambda+Ay)}{[b+y+hx(\lambda+Ay)]^2}, \label{f2_derivative} \\
    \frac{\partial f_1}{\partial y} &= -\frac{\lambda x[b+hx(\lambda+Ay)] + 2Abxy + Axy^2 + Ahx^2y(\lambda+Ay)}{[b+y+hx(\lambda+Ay)]^2},\label{f1_derivative}  \\
    \frac{\partial f_2}{\partial y} &= \frac{\lambda x[b+hx(\lambda+Ay)] + 2Abxy + Axy^2 + Ahx^2y(\lambda+Ay)}{[b+y+hx(\lambda+Ay)]^2} - s,\label{f2y} \\
    \frac{\partial^2 f_1}{\partial x^2} &= 
    r_1 \left( 2 + 2\frac{k_0}{k_1} - \frac{6}{k_1}x \right) 
    + \frac{2y(b+y)(\lambda+Ay)^2h}{[b+y+hx(\lambda+Ay)]^3}, \label{f1xx}\\
    \frac{\partial^2 f_2}{\partial x^2} &= -
    \frac{2y(b+y)(\lambda+Ay)^2h}{[b+y+hx(\lambda+Ay)]^3},   \label{f2xx}\\
    \frac{\partial^2 f_1}{\partial y^2} & = 
    2\frac{x(Ab-\lambda)(1+hAx)y}{[b+y+h x(\lambda+A y)]^3}-\frac{2x(Ab-\lambda)}{(b+y+hx(\lambda + Ay))^2}, \label{f1yy}\\
    \frac{\partial^2 f_2}{\partial y^2} &= 
    -\frac{\partial^2 f_1}{\partial y^2} = 
    2\frac{\partial f^{(2)}}{\partial y} + y\frac{\partial^2 f^{(2)}}{\partial y^2} \\[4pt]
    &= \frac{2x(Ab-\lambda)}{(b+y+hx(\lambda + Ay))^2} 
    - \frac{2x(Ab-\lambda)(1+hAx)y}{[b+y+h x(\lambda+A y)]^3}, \\
    \frac{\partial^2 f_1}{\partial x \partial y} &=-\frac{1}{[b+y+h x(\lambda+A y)]^3}  
    \Big[(\lambda b+ 2Ab y+2\lambda y+3A y^2)[b+y+h x(\lambda+A y)]   \notag\\
    &\quad - 2y(\lambda b + Ab y+\lambda y+A y^2)(1+hA x)\Big], \label{f1xy} \\
    \frac{\partial^2 f_2}{\partial x \partial y} 
    & =
    -\frac{\partial^2 f_1}{\partial x \partial y} \notag \\
    &= \frac{1}{[b+y+h x(\lambda+A y)]^3} \times \Big[(\lambda b+ 2Ab y+2\lambda y+3A y^2)(b+y+h x(\lambda+A y)) \notag \\
    & \quad    - 2y(\lambda b + Ab y+\lambda y+A y^2)(1+hA x)\Big]. \label{f2xy}
\end{align}

It is observed that system \eqref{equ02} has three boundary equilibria under the strong Allee effect (\( k_0 > 0 \)):
\begin{equation} \label{E012}
E_0 (0,0), \quad E_1 (k_1, 0), \quad E_2 (k_0, 0).
\end{equation}
In contrast, under the weak Allee effect (\( k_0 < 0 \)), only two boundary equilibria exist in the non-negative quadrant:
\[
E_0 (0,0) \quad \text{and} \quad E_1 (k_1, 0).
\]

In the following theorems, we analyze the stability of these boundary equilibria.
\begin{theorem}\label{thm03}
The stability of the boundary equilibria of system~\eqref{equ02} is characterized as follows:
\begin{itemize}
    \item The equilibrium point \( E_0 \) satisfies:
    \begin{itemize}
        \item If \( k_0 > 0 \) (strong Allee effect), then \( E_0 \) is a locally asymptotically stable hyperbolic equilibrium.
        \item If \( k_0 < 0 \) (weak Allee effect), then \( E_0 \) is a hyperbolic saddle.
    \end{itemize}

    \item The equilibrium point \( E_1 \) satisfies:
    \begin{itemize}
        \item If \( \displaystyle \frac{\lambda k_1}{b + h k_1 \lambda} - s < 0 \), then \( E_1 \) is a stable hyperbolic equilibrium.
        \item If \( \displaystyle \frac{\lambda k_1}{b + h k_1 \lambda} - s > 0 \), then \( E_1 \) is a hyperbolic saddle.
        \item If \( \displaystyle \frac{\lambda k_1}{b + h k_1 \lambda} - s = 0 \), then \( E_1 \) is a degenerate equilibrium.
    \end{itemize}

    \item If \( k_0 > 0 \), then the equilibrium \( E_2  \) lies in the biologically relevant domain and:
    \begin{itemize}
        \item If \( \displaystyle \frac{\lambda k_0}{b + h k_0 \lambda} - s < 0 \), then \( E_2 \) is a hyperbolic saddle.
        \item If \( \displaystyle \frac{\lambda k_0}{b + h k_0 \lambda} - s > 0 \), then \( E_2 \) is an unstable hyperbolic equilibrium.
        \item If \( \displaystyle \frac{\lambda k_0}{b + h k_0 \lambda} - s = 0 \), then \( E_2 \) is a degenerate equilibrium.
    \end{itemize}
\end{itemize}
\end{theorem}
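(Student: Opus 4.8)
The plan is to evaluate the Jacobian \eqref{eqj(e)} at each boundary equilibrium and read off its eigenvalues. The decisive structural fact is that every boundary equilibrium has $y=0$, and from \eqref{f2_derivative} the entry $\frac{\partial f_2}{\partial x}$ carries an overall factor of $y$ in its numerator; hence $\frac{\partial f_2}{\partial x}=0$ at each such point. The Jacobian therefore becomes upper triangular there, so its eigenvalues are exactly the diagonal entries $\frac{\partial f_1}{\partial x}$ and $\frac{\partial f_2}{\partial y}$, and the whole classification reduces to tracking the signs of these two quantities.

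First I would treat $E_0(0,0)$. Setting $x=y=0$ in \eqref{f1x} kills the predation term and leaves $r_1(1)(-k_0)=-r_1k_0$, while \eqref{f2y} reduces to $-s$ because its fraction vanishes (its numerator is proportional to $x$). The eigenvalues are thus $-r_1k_0$ and $-s$. Since $r_1,s>0$, both are nonzero, so $E_0$ is hyperbolic: for $k_0>0$ they are both negative, giving a stable node, while for $k_0<0$ the value $-r_1k_0>0$ produces a saddle.

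Next I would handle $E_1(k_1,0)$ and $E_2(k_0,0)$ together, since the computation is the same up to replacing $k_1$ by $k_0$. Evaluating \eqref{f1x} at $(x_*,0)$, the first summand vanishes at $x_*=k_1$ (via $1-k_1/k_1=0$) and at $x_*=k_0$ (via $x_*-k_0=0$), leaving $\frac{\partial f_1}{\partial x}=r_1x_*\bigl(1+\tfrac{k_0}{k_1}-\tfrac{2x_*}{k_1}\bigr)$. This equals $r_1(k_0-k_1)<0$ at $E_1$ (using $k_1>k_0$) and $\tfrac{r_1k_0(k_1-k_0)}{k_1}>0$ at $E_2$ (using $k_1>k_0>0$). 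Simultaneously \eqref{f2y} at $(x_*,0)$ collapses to $\tfrac{\lambda x_*}{b+hx_*\lambda}-s$, which is precisely the threshold quantity in the statement. The three cases then follow by inspection: at $E_1$ the first eigenvalue is negative, so stability is governed solely by the sign of $\tfrac{\lambda k_1}{b+hk_1\lambda}-s$; at $E_2$ the first eigenvalue is positive, so the point is a saddle when $\tfrac{\lambda k_0}{b+hk_0\lambda}-s<0$ and an unstable node when it is positive.

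There is no genuine analytic difficulty here once triangularity is recognized; the only care needed is bookkeeping which factor of $\frac{\partial f_1}{\partial x}$ vanishes at which equilibrium, and confirming that $\frac{\partial f_2}{\partial x}$ is identically zero (not merely small) on the $y=0$ axis. In each borderline case $\tfrac{\lambda x_*}{b+hx_*\lambda}-s=0$ one diagonal entry vanishes, so the linearization has a zero eigenvalue and is inconclusive; this is exactly why those cases are labeled degenerate, and a finer classification would require a center-manifold or normal-form reduction beyond the scope of this theorem.
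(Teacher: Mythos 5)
Your proposal is correct and follows essentially the same route as the paper: evaluate the Jacobian at each boundary equilibrium, observe that $\frac{\partial f_2}{\partial x}=0$ on the $y=0$ axis so the matrix is triangular, and classify by the signs of the diagonal entries $-r_1k_0$, $-s$, $r_1x_*\bigl(1+\tfrac{k_0}{k_1}-\tfrac{2x_*}{k_1}\bigr)$, and $\tfrac{\lambda x_*}{b+hx_*\lambda}-s$. The unified treatment of $E_1$ and $E_2$ and the closing remark on the degenerate cases are minor presentational differences only.
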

\begin{proof}
We analyze the stability of each boundary equilibrium by evaluating the Jacobian matrix of system~\eqref{equ02} at the corresponding points.

\textbf{1. Stability of \( E_0 = (0,0) \).}
The Jacobian matrix at \( E_0 \) is:
\[
J(E_0) = 
\begin{pmatrix}
\displaystyle \frac{\partial f_1}{\partial x} & \displaystyle \frac{\partial f_1}{\partial y} \\
\displaystyle \frac{\partial f_2}{\partial x} & \displaystyle \frac{\partial f_2}{\partial y}
\end{pmatrix}_{(0,0)} 
= 
\begin{pmatrix}
- r_1 k_0 & 0 \\
0 & -s
\end{pmatrix}.
\]
The eigenvalues of \( J(E_0) \) are \( -r_1 k_0 \) and \( -s \). Note that  \( s > 0 \) and  the sign of \( -r_1 k_0 \) depends on \( k_0 \):
\begin{itemize}
    \item If \( k_0 > 0 \) (strong Allee effect), both eigenvalues are negative. Hence, \( E_0 \) is a stable hyperbolic equilibrium.
    \item If \( k_0 < 0 \) (weak Allee effect), one eigenvalue is positive and the other is negative. Hence, \( E_0 \) is a hyperbolic saddle.
\end{itemize}

\textbf{2. Stability of \( E_1 = (k_1, 0) \).}
Compute the Jacobian matrix:
    \[
    \frac{\partial f_1}{\partial x} = r_1 k_1 \left(\frac{k_0}{k_1} - 1\right), \quad \frac{\partial f_1}{\partial y} = -\frac{\lambda k_1}{b + h k_1 \lambda}, \quad \frac{\partial f_2}{\partial x} = 0, \quad \frac{\partial f_2}{\partial y} = \frac{\lambda k_1}{b + h k_1 \lambda} - s.
    \]
Therefore, the Jacobian at \( E_1 \) is:
\[
J(E_1) = 
\begin{pmatrix}
r_1 k_1 \left( \frac{k_0}{k_1} - 1 \right) & - \frac{\lambda k_1}{b + h k_1 \lambda} \\
0 & \frac{\lambda k_1}{b + h k_1 \lambda} - s
\end{pmatrix}.
\]
This is a triangular matrix, so its eigenvalues are:
\[
\zeta_1 = r_1 k_1 \left( \frac{k_0}{k_1} - 1 \right), \quad 
\zeta_2 = \frac{\lambda k_1}{b + h k_1 \lambda} - s.
\]
Note that \( k_0 < k_1 \), so \( \zeta_1 < 0 \) always. The sign of \( \zeta_2 \) determines the nature of \( E_1 \):
\begin{itemize}
    \item If \( \zeta_2 < 0 \), then both eigenvalues are negative, and \( E_1 \) is a stable hyperbolic equilibrium.
    \item If \( \zeta_2 > 0 \), then one eigenvalue is negative and one is positive, so \( E_1 \) is a hyperbolic saddle.
    \item If \( \zeta_2 = 0 \), then \( E_1 \) is a degenerate equilibrium.

\textbf{3. Stability of \( E_2 = (k_0, 0) \).}

We now consider the case when \( k_0 > 0 \) (i.e., \( E_2 \in \mathbb{R}_+^2 \)). At \( E_2 = (k_0, 0) \), we compute:
    \[
    \frac{\partial f_1}{\partial x} = r_1 k_0 \left(1 - \frac{k_0}{k_1} \right), \quad \frac{\partial f_1}{\partial y} = -\frac{\lambda k_0}{b + h k_0 \lambda}, \quad \frac{\partial f_2}{\partial x} = 0, \quad \frac{\partial f_2}{\partial y} = \frac{\lambda k_0}{b + h k_0 \lambda} - s.
    \]
Hence, the Jacobian at \( E_2 \) is:
\[
J(E_2) = 
\begin{pmatrix}
r_1 k_0 \left(1 - \frac{k_0}{k_1} \right) & - \frac{\lambda k_0}{b + h k_0 \lambda} \\
0 & \frac{\lambda k_0}{b + h k_0 \lambda} - s
\end{pmatrix}.
\]
The eigenvalues are:
\[
\zeta_3 = r_1 k_0 \left(1 - \frac{k_0}{k_1} \right) > 0, \quad 
\zeta_4 = \frac{\lambda k_0}{b + h k_0 \lambda} - s.
\]
Thus, the stability of \( E_2 \) depends on \( \zeta_4 \):
\begin{itemize}
    \item If \( \zeta_4 < 0 \), then one eigenvalue is positive and one is negative, so \( E_2 \) is a hyperbolic saddle.
    \item If \( \zeta_4 > 0 \), both eigenvalues are positive, so \( E_2 \) is an unstable hyperbolic equilibrium.
    \item If \( \zeta_4 = 0 \), then \( E_2 \) is a degenerate equilibrium.
\end{itemize}

 The classification of the boundary equilibria under both strong and weak Allee effect follows directly from the eigenvalue analysis of the Jacobian matrices at \( E_0 \), \( E_1 \), and (when \( k_0 > 0 \)) \( E_2 \), as described above.     We complete the proof.
\end{itemize}
\end{proof}

Let us now examine the stability of interior equilibria, denoted by \( E^*(x^*, y^*) \), of system \eqref{equ02}. To do so, we analyze the Jacobian matrix at \( E^* \), using \eqref{eq_f1f1}:
\begin{align}\label{mtx01}
     J(E^*)=
\begin{pmatrix}
\frac{\partial f_1}{\partial x} & \frac{\partial f_1}{\partial y}\\
\frac{\partial f_2}{\partial x} & \frac{\partial f_2}{\partial y}
\end{pmatrix}
=
\begin{pmatrix}
f^{(1)} + x\frac{\partial f^{(1)}}{\partial x} &x \frac{\partial f^{(1)}}{\partial y}\\
y\frac{\partial f^{(2)}}{\partial x} & f^{(2)} + y\frac{\partial f^{(2)}}{\partial y}
\end{pmatrix}.
\end{align}

Let {\( y_1(x)>0 \)} and \( y_2(x) \) denote the solutions of \( f^{(1)}(x,y) = 0 \) and \( f^{(2)}(x,y) = 0 \), respectively.  
Applying the implicit function theorem, we obtain
{\begin{align}\label{E09}
    \frac{d y_i}{d x} & = - \frac{\frac{\partial f^{(i)}}{\partial x}}{\frac{\partial f^{(i)}}{\partial y}}, \qquad i=1, 2.
\end{align}
}

Since \( E^* \) is an interior equilibrium, it satisfies \( f^{(1)}(E^*) = f^{(2)}(E^*) = 0 \). Substituting this property into the Jacobian matrix, we can rewrite \( J(E^*) \) as:
\begin{align}\label{mtx02}
     J(E^*)=
\begin{pmatrix}
-x\frac{\partial f^{(1)}}{\partial y}\frac{d y_1}{d x} &x \frac{\partial f^{(1)}}{\partial y}\\
-y\frac{\partial f^{(2)}}{\partial y}\frac{d y_2}{d x} & y\frac{\partial f^{(2)}}{\partial y}
\end{pmatrix}.
\end{align}

With straightforward calculations, we derive the following lemma.
\begin{lemma}\label{lem02}
    The determinant and trace of the Jacobian matrix evaluated at $E^*$ can be expressed as
    $$\det(J(E^*)) = xy\frac{\partial f^{(1)}}{\partial y}\frac{\partial f^{(2)}}{\partial y}\big(\frac{d y_2}{d x} - \frac{d y_1}{d x}  \big)\mid_{(x^*,y^*)},$$
    $$Tr(J(E^*))  = y\frac{\partial f^{(2)}}{\partial y} - x\frac{\partial f^{(1)}}{\partial y}\frac{d y_1}{d x} \mid_{(x^*,y^*)}= x\frac{\partial f^{(1)}}{\partial x} + y\frac{\partial f^{(2)}}{\partial y}\mid_{(x^*,y^*)}.$$
\end{lemma}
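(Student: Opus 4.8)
The plan is to read the two invariants directly off the representation \eqref{mtx02}, which was set up for exactly this purpose; the only inputs are the equilibrium relations $f^{(1)}(E^*)=f^{(2)}(E^*)=0$ together with the implicit-function identity \eqref{E09}. All partial derivatives below are understood to be evaluated at $(x^*,y^*)$.

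First I would justify the passage from \eqref{mtx01} to \eqref{mtx02}, since once that form is in hand the rest is bookkeeping. At an interior equilibrium the scalar summands $f^{(1)}$ and $f^{(2)}$ sitting on the diagonal of \eqref{mtx01} vanish, so the diagonal entries collapse to $x\,\partial f^{(1)}/\partial x$ and $y\,\partial f^{(2)}/\partial y$. Rewriting \eqref{E09} as $\partial f^{(i)}/\partial x=-(\partial f^{(i)}/\partial y)(dy_i/dx)$ for $i=1,2$ then turns the upper-left entry into $-x(\partial f^{(1)}/\partial y)(dy_1/dx)$ and the lower-left entry $y\,\partial f^{(2)}/\partial x$ into $-y(\partial f^{(2)}/\partial y)(dy_2/dx)$, producing precisely \eqref{mtx02}. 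This step silently requires $\partial f^{(i)}/\partial y\neq 0$ at $E^*$, which is exactly the condition making $dy_i/dx$ well defined; for completeness one checks from \eqref{f1_derivative} that $\partial f^{(1)}/\partial y<0$, and a short computation from the explicit form of $f^{(2)}$ in \eqref{equ07} shows $\partial f^{(2)}/\partial y$ is proportional to $(Ab-\lambda)$, hence nonzero whenever $\lambda\neq Ab$.

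Next I would compute the determinant of \eqref{mtx02}. The diagonal product contributes $-xy(\partial f^{(1)}/\partial y)(\partial f^{(2)}/\partial y)(dy_1/dx)$, while subtracting the off-diagonal product $\bigl(x\,\partial f^{(1)}/\partial y\bigr)\bigl(-y(\partial f^{(2)}/\partial y)(dy_2/dx)\bigr)$ flips a sign and contributes $+xy(\partial f^{(1)}/\partial y)(\partial f^{(2)}/\partial y)(dy_2/dx)$. Factoring out the common coefficient $xy(\partial f^{(1)}/\partial y)(\partial f^{(2)}/\partial y)$ yields the claimed determinant with the factor $(dy_2/dx-dy_1/dx)$.

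Finally, the trace is simply the sum of the diagonal entries of \eqref{mtx02}, giving at once the first form $y\,\partial f^{(2)}/\partial y-x(\partial f^{(1)}/\partial y)(dy_1/dx)$; applying \eqref{E09} once more in the reverse direction to replace $-x(\partial f^{(1)}/\partial y)(dy_1/dx)$ by $x\,\partial f^{(1)}/\partial x$ produces the equivalent second form $x\,\partial f^{(1)}/\partial x+y\,\partial f^{(2)}/\partial y$. Every step is a substitution or a sign manipulation, so there is no genuine obstacle; the only points demanding a little care are tracking the minus sign in the determinant expansion and, for full rigor, recording the nondegeneracy $\partial f^{(i)}/\partial y\neq 0$ that underlies the use of \eqref{E09}.
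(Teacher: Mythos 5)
Your proposal is correct and follows exactly the route the paper intends: the paper derives the matrix form \eqref{mtx02} from \eqref{mtx01} using $f^{(1)}(E^*)=f^{(2)}(E^*)=0$ and \eqref{E09}, and then states the lemma as a ``straightforward calculation,'' which is precisely the determinant expansion and trace summation you carry out. Your additional remark on the nondegeneracy $\partial f^{(i)}/\partial y\neq 0$ needed for \eqref{E09} is a small but worthwhile point of rigor that the paper leaves implicit.
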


Recall that Theorem \ref{two_equilibria} establishes that under some conditions,  system \eqref{equ02} has at most two distinct interior equilibria in \( \mathbb{R}_+^2 \). The stability properties of these equilibria are summarized in the following theorem.

\begin{theorem}\label{thm01}
Consider system~\eqref{equ02} with initial condition $(x(0),y(0)) \in \mathbb{R}_+^2$.  
Let $E^*=(x^*,y^*)$ be an interior equilibrium, and define
\[
y_1(x) \;=\; \frac{f(x)(1+hAx)+\lambda + \sqrt{\,\bigl(f(x)(1+hAx)+\lambda\bigr)^2+4A(b+h\lambda x)f(x)}}{2A},
\]
$$y_2(x) \;=\; \frac{\lambda}{A}\left[-1+\frac{s\bigl(1-\tfrac{A}{\lambda}b\bigr)}{A(sh-1)x+s}\right],$$
where
\[
f(x) = r_1\!\left(1-\frac{x}{k_1}\right)(x-k_0).
\]

Then the local stability of $E^*$ is determined as follows:

\begin{itemize}
    \item[(a)] If $sh<1$ and $\lambda > A b$:  
    \begin{itemize}
        \item If $\tfrac{dy_2}{dx}-\tfrac{dy_1}{dx}<0$ at $E^*$, then $E^*$ is a saddle point.  
        \item If $\tfrac{dy_2}{dx}-\tfrac{dy_1}{dx}>0$ at $E^*$, then $E^*$ is locally stable when $\operatorname{Tr}(J(E^*))<0$, and unstable when $\operatorname{Tr}(J(E^*))>0$.
    \end{itemize}

    \item[(b)] If $sh<1$ and $\lambda < A b$:  
    \begin{itemize}
        \item If $\tfrac{dy_2}{dx}-\tfrac{dy_1}{dx}>0$ at $E^*$, then $E^*$ is a saddle point.  
        \item If $\tfrac{dy_2}{dx}-\tfrac{dy_1}{dx}<0$ at $E^*$, then $E^*$ is locally stable when $\operatorname{Tr}(J(E^*))<0$, and unstable when $\operatorname{Tr}(J(E^*))>0$.
    \end{itemize}
\end{itemize}
\end{theorem}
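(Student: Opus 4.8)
The plan is to read off the stability type of $E^*$ from the signs of $\det(J(E^*))$ and $\operatorname{Tr}(J(E^*))$, using the standard classification for a hyperbolic equilibrium of a planar vector field: if $\det(J(E^*))<0$ the equilibrium is a saddle, while if $\det(J(E^*))>0$ it is locally asymptotically stable when $\operatorname{Tr}(J(E^*))<0$ and unstable when $\operatorname{Tr}(J(E^*))>0$. Since Lemma~\ref{lem02} already expresses both invariants through the partial derivatives and the nullcline slopes, the whole argument reduces to pinning down the sign of the prefactor $xy\,\frac{\partial f^{(1)}}{\partial y}\frac{\partial f^{(2)}}{\partial y}$ that multiplies $\bigl(\frac{dy_2}{dx}-\frac{dy_1}{dx}\bigr)$ in the determinant, since that difference is exactly the quantity on which the statement conditions.

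First I would establish the two sign facts at an interior point $(x^*,y^*)\in\mathbb{R}_+^2$. For $\partial f^{(1)}/\partial y$, I would use $\frac{\partial f_1}{\partial y}=x\,\frac{\partial f^{(1)}}{\partial y}$ together with \eqref{f1_derivative}: every term in that numerator is a positive multiple of the (positive) parameters and of $x,y>0$, so $\frac{\partial f^{(1)}}{\partial y}<0$ unconditionally. For $\partial f^{(2)}/\partial y$ I would exploit the identity $\frac{\partial f_2}{\partial y}=f^{(2)}+y\,\frac{\partial f^{(2)}}{\partial y}$ read off from \eqref{mtx01}; because $f^{(2)}(x^*,y^*)=0$ at the equilibrium, subtracting $f^{(2)}$ from \eqref{f2y} produces a large cancellation leaving the compact form
\[
\frac{\partial f^{(2)}}{\partial y}\Big|_{E^*}=\frac{x(Ab-\lambda)}{\bigl[b+y+hx(\lambda+Ay)\bigr]^2}.
\]
As the denominator is strictly positive and $x>0$, the sign of $\partial f^{(2)}/\partial y$ equals $\operatorname{sign}(Ab-\lambda)$: negative precisely when $\lambda>Ab$ and positive when $\lambda<Ab$.

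With these signs in hand the case split is immediate. In case (a) ($\lambda>Ab$) both $\frac{\partial f^{(1)}}{\partial y}$ and $\frac{\partial f^{(2)}}{\partial y}$ are negative, their product is positive, and Lemma~\ref{lem02} gives $\operatorname{sign}\bigl(\det J(E^*)\bigr)=\operatorname{sign}\bigl(\frac{dy_2}{dx}-\frac{dy_1}{dx}\bigr)$; hence $\frac{dy_2}{dx}-\frac{dy_1}{dx}<0$ forces $\det J(E^*)<0$ (a saddle), while $\frac{dy_2}{dx}-\frac{dy_1}{dx}>0$ gives $\det J(E^*)>0$, so the trace sign decides stability. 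In case (b) ($\lambda<Ab$) the product of the two partials is negative, so the determinant carries the opposite sign of $\frac{dy_2}{dx}-\frac{dy_1}{dx}$, flipping the two subcases exactly as stated. The hypothesis $sh<1$ does not enter the sign computation itself; it is the standing assumption under which Theorem~\ref{two_equilibria} guarantees that a genuine interior equilibrium $E^*\in\mathbb{R}_+^2$ exists and that $y_1(x),y_2(x)$ are the relevant positive branches, so I would invoke it only to ensure the point being classified is legitimate.

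The only genuinely computational step is the cancellation yielding the compact formula for $\partial f^{(2)}/\partial y$, and that is where I expect the most care to be needed: one must expand $\frac{\partial f_2}{\partial y}-f^{(2)}$ and confirm that every monomial of total degree two or higher in $(x,y)$ vanishes, leaving only $xy(Ab-\lambda)/\bigl[b+y+hx(\lambda+Ay)\bigr]^2$. Everything downstream of that identity is a mechanical application of the planar $\det$/$\operatorname{Tr}$ dichotomy, so I anticipate no further obstacle; the one subtlety worth flagging is simply that the classification is vacuous unless an interior equilibrium exists, which is precisely what $sh<1$ together with one of \eqref{2.4.1}--\eqref{2.4.3} secures.
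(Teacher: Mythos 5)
Your proposal is correct and follows essentially the same route as the paper: both use Lemma~\ref{lem02} to reduce the sign of $\det(J(E^*))$ to the sign of $\frac{\partial f^{(1)}}{\partial y}\,\frac{\partial f^{(2)}}{\partial y}\bigl(\frac{dy_2}{dx}-\frac{dy_1}{dx}\bigr)$, establish $\frac{\partial f^{(1)}}{\partial y}<0$ from \eqref{f1_derivative} and $\operatorname{sign}\bigl(\frac{\partial f^{(2)}}{\partial y}\bigr)=\operatorname{sign}(Ab-\lambda)$, and then invoke the planar trace--determinant classification. Your explicit formula $\frac{\partial f^{(2)}}{\partial y}=x(Ab-\lambda)/\bigl[b+y+hx(\lambda+Ay)\bigr]^2$ is in fact the correct identity (the paper's displayed version omits the factor $x$), and since $x>0$ this does not alter the sign argument or the conclusion.
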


\begin{proof}
From \eqref{eq_f1f1}, the partial derivatives of the vector field at $E^*$ are
\[
\frac{\partial f^{(1)}}{\partial y} \;=\; \frac{1}{x}\frac{\partial f_1}{\partial y}, 
\qquad 
\frac{\partial f^{(2)}}{\partial y} \;=\; \frac{A b - \lambda}{\bigl[b + y + h x (\lambda + A y)\bigr]^2}.
\]
By \eqref{f1_derivative}, we know that $\tfrac{\partial f^{(1)}}{\partial y}<0$, while the sign of $\tfrac{\partial f^{(2)}}{\partial y}$ depends on whether $\lambda$ is greater or smaller than $A b$.

\medskip
\noindent\textbf{Case (a)}: $sh<1$ and $\lambda > A b$.

Here $\tfrac{\partial f^{(2)}}{\partial y}<0$.  
Applying Lemma~\ref{lem02}, the determinant of the Jacobian at $E^*$ is
\[
\det \bigl(J(E^*)\bigr) 
= x^* y^* \,\frac{\partial f^{(1)}}{\partial y}(E^*)\,\frac{\partial f^{(2)}}{\partial y}(E^*) 
\left(\tfrac{dy_2}{dx}-\tfrac{dy_1}{dx}\right)\Big|_{E^*}.
\]

\begin{itemize}
    \item If $\tfrac{dy_2}{dx}-\tfrac{dy_1}{dx}<0$ at $E^*$, then $\det(J(E^*))<0$, so $E^*$ is a saddle point.
    \item If $\tfrac{dy_2}{dx}-\tfrac{dy_1}{dx}>0$ at $E^*$, then $\det(J(E^*))>0$. In this case, the type of $E^*$ is determined by the trace: it is locally stable when $\operatorname{Tr}(J(E^*))<0$, and unstable when $\operatorname{Tr}(J(E^*))>0$.
\end{itemize}

\noindent\textbf{Case (b)}: $sh<1$ and $\lambda < A b$.

In this case $\tfrac{\partial f^{(2)}}{\partial y}>0$.  
By the same argument as in Case~(a), the sign of $\det(J(E^*))$ is reversed, and the conclusions follow analogously:
\begin{itemize}
    \item If $\tfrac{dy_2}{dx}-\tfrac{dy_1}{dx}>0$ at $E^*$, then $\det(J(E^*))<0$, hence $E^*$ is a saddle point.
    \item If $\tfrac{dy_2}{dx}-\tfrac{dy_1}{dx}<0$ at $E^*$, then $\det(J(E^*))>0$, and $E^*$ is locally stable when $\operatorname{Tr}(J(E^*))<0$, but unstable when $\operatorname{Tr}(J(E^*))>0$.
\end{itemize}

This completes the proof.
\end{proof}

In the remainder of this section, we analyze the global dynamics of system \eqref{equ02}. We not only examine global stability but also provide sufficient conditions for the decline of predators. 

\begin{theorem}\label{globaldynamics}
Suppose the parameters satisfy
\begin{equation*}
\begin{cases}
 \lambda > A b, \\[0.3ex]
 A k_1 (s h - 1) + s > 0, \\[0.3ex]
 (1 - s h)\,\lambda k_1 > s b, \\[0.3ex]
 \dfrac{r_1}{s}\!\left(1+\dfrac{k_0}{k_1}\right) < \dfrac{A(\lambda - A b)(1 - s h)^2}{s^2}.
\end{cases}
\end{equation*}
Furthermore, assume that for every $(x,y)$ in
\[
\mathbf{D}^{0}:=(0,\,k_1)\times \Bigl(0,\,\max\bigl\{\,k_1,\,\max_{z\in[0,k_1]} g^{(1)}(z)\,\bigr\}\Bigr),
\]
the Jacobian matrix $J$ defined in~\eqref{eqj(e)} satisfies
\[
\operatorname{Tr}\bigl(J(x,y)\bigr)
= x\,\frac{\partial f^{(1)}}{\partial x}(x,y)
+ y\,\frac{\partial f^{(2)}}{\partial y}(x,y) < 0.
\]
Then, under the weak Allee effect, system~\eqref{equ02} possesses a unique interior equilibrium, which is globally asymptotically stable.
\end{theorem}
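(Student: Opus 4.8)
The plan is to establish global asymptotic stability by a Poincaré--Bendixson argument: first locate the unique interior equilibrium $E^*$, prove it is locally asymptotically stable, then use a Dulac function to exclude periodic orbits, and finally conclude that every interior trajectory must converge to $E^*$. I would organize the proof in four steps matching these ingredients.

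\emph{Existence and uniqueness of $E^*$.} Interior equilibria are the points of $\mathbb{R}_+^2$ where $y=g^{(1)}(x)$ meets $y=g^{(2)}(x)$; set $g(x):=g^{(2)}(x)-g^{(1)}(x)$. Under the weak Allee effect $g^{(1)}>0$ on $(0,k_1)$. The third hypothesis forces $sh<1$, and together with the second it keeps the denominator $A(sh-1)x+s$ of $g^{(2)}$ positive on $[0,k_1]$ while placing the sign change $x_0=\frac{sb}{\lambda(1-sh)}$ of its numerator strictly inside $(0,k_1)$; thus $\Delta=(x_0,k_1)$, with $g(x_0)=-g^{(1)}(x_0)<0$ and $g(k_1)=g^{(2)}(k_1)>0$. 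The fourth hypothesis is precisely \eqref{2.4.3}, which (as in the proof of Theorem~\ref{two_equilibria}) makes $g$ convex on $[0,k_1]$. A convex function that is negative at $x_0$ and positive at $k_1$ has a single, transversal zero $x^*$ with $g'(x^*)>0$; this yields the unique interior equilibrium $E^*=(x^*,g^{(1)}(x^*))$.

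\emph{Local stability and exclusion of cycles.} Because $\lambda>Ab$ and $sh<1$, we are in case~(a) of Theorem~\ref{thm01}, where $\frac{\partial f^{(1)}}{\partial y}<0$ and $\frac{\partial f^{(2)}}{\partial y}<0$, so by Lemma~\ref{lem02} the sign of $\det J(E^*)$ equals that of $\frac{dy_2}{dx}-\frac{dy_1}{dx}$ at $E^*$. The key identity is that along the predator nullcline $y_2=g^{(2)}$ one has $f^{(1)}(x,y_2(x))=-\frac{s}{x}\,g(x)$; differentiating at $x^*$ and using $g(x^*)=0$ gives $\frac{\partial f^{(1)}}{\partial y}\bigl(\frac{dy_2}{dx}-\frac{dy_1}{dx}\bigr)=-\frac{s}{x^*}g'(x^*)<0$, so $\frac{dy_2}{dx}-\frac{dy_1}{dx}>0$ and $\det J(E^*)>0$. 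The trace hypothesis gives $\operatorname{Tr}J(E^*)<0$, so $E^*$ is locally asymptotically stable. To rule out periodic orbits I would take the Dulac function $B=\frac{1}{xy}$: since $f_1=xf^{(1)}$ and $f_2=yf^{(2)}$, one computes $\frac{\partial(Bf_1)}{\partial x}+\frac{\partial(Bf_2)}{\partial y}=\frac{1}{xy}\bigl(x\frac{\partial f^{(1)}}{\partial x}+y\frac{\partial f^{(2)}}{\partial y}\bigr)=\frac{1}{xy}\operatorname{Tr}(J(x,y))<0$ on the simply connected set $\mathbf{D}^0$, so no closed orbit lies in $\mathbf{D}^0$.

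\emph{Global attraction.} By Theorems~\ref{thm02} and~\ref{lem08} every interior trajectory is bounded and eventually enters $\mathbf{B}^0\subset\mathbf{D}^0$. Under the weak Allee effect $E_0$ is a saddle whose stable manifold is the $y$-axis, and the third hypothesis gives $\frac{\lambda k_1}{b+hk_1\lambda}-s>0$, so by Theorem~\ref{thm03} $E_1$ is a saddle whose stable manifold is the $x$-axis; hence no interior trajectory converges to a boundary point. The flow on the $x$-axis runs from $E_0$ to $E_1$, while the unstable manifold of $E_1$ enters the interior and cannot return to $E_0$, so no separatrix cycle can form, and the Dulac estimate forbids any cycle enclosing a region. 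By the Poincaré--Bendixson theorem the $\omega$-limit set of each interior trajectory is the single point $E^*$; combined with its local stability, $E^*$ is globally asymptotically stable. I expect the local-stability step to be the main obstacle: the determinant sign reduces to the slope inequality $\frac{dy_2}{dx}>\frac{dy_1}{dx}$ between the two distinct nullcline branches, and the trick is to convert this into the controllable sign change of the convex function $g$ through the identity $f^{(1)}(x,y_2(x))=-\frac{s}{x}g(x)$; the boundary bookkeeping in the Poincaré--Bendixson step is comparatively routine once the saddle structure is known.
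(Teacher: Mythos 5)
Your proposal is correct and follows essentially the same route as the paper's proof: nullcline intersection for existence and uniqueness, the slope inequality $\tfrac{dy_2}{dx}>\tfrac{dy_1}{dx}$ plus the trace hypothesis for local stability via Theorem~\ref{thm01}, the Dulac function $1/(xy)$ to exclude cycles, and Poincar\'e--Bendixson for global attraction. Your identity $f^{(1)}(x,y_2(x))=-\tfrac{s}{x}g(x)$ together with convexity of $g$ actually makes the slope-comparison step more rigorous than the paper's boundary-value argument, and your explicit exclusion of boundary $\omega$-limits and separatrix cycles fills in details the paper leaves implicit.
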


\begin{proof}
Since \( (1 - sh)\lambda k_1 > sb \), it follows that \( sh < 1 \).  
Moreover, by the assumptions \(\lambda > Ab\) and 
\(\tfrac{r_1}{s}\!\left(1+\tfrac{k_0}{k_1}\right) < \tfrac{A(\lambda-Ab)(1-sh)^2}{s^2}\),  
Theorem~\ref{two_equilibria} guarantees that there are at most two intersections between 
\(y_1(x)\) and \(y_2(x)\). From~\eqref{equ22}, we compute
\[
    g^{(2)}(k_1) = \frac{(1 - s h)\lambda k_1 - sb}{s + A(sh - 1)k_1} > 0,
    \qquad g^{(2)}(0) = -b < 0,
\]
while from~\eqref{equ21} we have \( g^{(1)}(0) = g^{(1)}(k_1) = 0 \).  
Hence, by the intermediate value theorem, the equation \( g^{(1)}(x) = g^{(2)}(x) \) admits a solution in \((0, k_1)\). Consequently, there exists a unique interior equilibrium 
\( E^* = (x^*,y^*) \) with \( x^* \in (0,k_1) \) and \( y^* = g^{(1)}(x^*) > 0 \), so that
\[
   y^* \in \Bigl(0,\,\max_{z\in[0,k_1]} g^{(1)}(z)\Bigr),
   \quad\text{and hence } E^* \in \mathbf{D}^0.
\]

\noindent\emph{Local stability.}  
From the preceding analysis,
\[
   y_2(0) = -b < 0, \quad y_2(k_1) > 0, \quad y_1(0) = y_1(k_1) = 0,
\]
which implies that at \(E^*\) we have
\[
   \frac{d y_2}{dx} > \frac{d y_1}{dx}.
\]
Together with the assumption
\[
   \operatorname{Tr}\bigl(J(x,y)\bigr)
   = x\,\frac{\partial f^{(1)}}{\partial x}(x,y)
   + y\,\frac{\partial f^{(2)}}{\partial y}(x,y) < 0 \quad \text{at } E^*,
\]
Theorem~\ref{thm01} ensures that \(E^*\) is locally asymptotically stable.

\medskip
\noindent\emph{Nonexistence of periodic orbits.}  
Define the Dulac function \( D(x,y) = 1/(xy) \). Then
\begin{equation}\label{equ09}
\begin{aligned}
\frac{\partial (D f_1)}{\partial x} + \frac{\partial (D f_2)}{\partial y} 
&= D \left( \frac{\partial f_1}{\partial x} + \frac{\partial f_2}{\partial y} \right) 
   + f_1 \frac{\partial D}{\partial x} + f_2 \frac{\partial D}{\partial y} \\[4pt]
&= D \left( f^{(1)} + x \frac{\partial f^{(1)}}{\partial x} 
            + f^{(2)} + y \frac{\partial f^{(2)}}{\partial y} \right) 
   - \frac{Df_1}{x} - \frac{Df_2}{y} \\[4pt]
&= D \left( x \frac{\partial f^{(1)}}{\partial x} 
           + y \frac{\partial f^{(2)}}{\partial y} \right) \\[4pt]
&= D \,\operatorname{Tr}\bigl(J(x,y)\bigr) < 0, 
   \quad (x,y) \in \mathbf{D}^0.
\end{aligned}
\end{equation}
By Dulac’s criterion~\cite{burton2005volterra}, system~\eqref{equ02} admits no periodic orbits in \(\mathbf{D}^0\).  
Moreover, Theorem~\ref{lem08} implies that for any initial condition 
\((x(0),y(0)) \in \mathbb{R}_+^2\), the trajectory remains bounded:
\[
   \limsup_{t\to\infty} x(t) \leq k_1, \qquad 
   \limsup_{t\to\infty} y(t) \leq k_1.
\]
Thus, the system has no periodic solutions in \(\mathbb{R}_+^2\).

\medskip
\noindent\emph{Dynamics on the boundary.}  
Since \((1-sh)\lambda k_1 - sb > 0\), it follows that
\(\tfrac{\lambda k_1}{b + h k_1 \lambda} - s > 0\). Hence, the boundary equilibrium 
\(E_1 = (k_1,0)\) is a saddle by Theorem~\ref{thm03}, stable along the \(x\)-axis.  
Similarly, \(E_0 = (0,0)\) is a saddle, stable along the \(y\)-axis.

\medskip
\noindent\emph{Global stability.}  
Since the set \(\mathbf{B}^\delta\) (\(\delta > 0\)) is positively invariant by Theorem~\ref{lem08}, and the system has no periodic orbits in \(\mathbb{R}_+^2\), the Poincaré–Bendixson theorem implies that every trajectory in \(\mathbb{R}_+^2\) converges to an equilibrium.  
As the only interior equilibrium is \(E^*\), it follows that \(E^*\) is globally asymptotically stable.

This completes the proof.
\end{proof}

In Theorem~\ref{globaldynamics}, we assume that the Jacobian trace satisfies 
\(\operatorname{Tr}(J(x,y)) < 0\) for all \((x,y) \in \mathbf{D}^0\).  
The following lemma provides a sufficient parameter condition under which this assumption holds.

\begin{lemma}\label{traceJacobian}
Suppose system~\eqref{equ02} exhibits a weak Allee effect and that \(k_0 = -k_1\).  
If the parameters satisfy
\[
   (Ab - \lambda) + \Bigl(\lambda + A \max\{\,k_1, \max_{z \in [0,k_1]} g^{(1)}(z)\,\}\Bigr)^{\!2} h < 0,
\]
then the Jacobian trace
\[
   \operatorname{Tr}\bigl(J(x,y)\bigr) 
   = x\,\frac{\partial f^{(1)}}{\partial x}(x,y) 
   + y\,\frac{\partial f^{(2)}}{\partial y}(x,y)
\]
is strictly negative for all \((x,y) \in \mathbf{D}^0\).
\end{lemma}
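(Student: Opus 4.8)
The plan is to write the quantity to be bounded, $\operatorname{Tr}(J(x,y))=x\,\tfrac{\partial f^{(1)}}{\partial x}+y\,\tfrac{\partial f^{(2)}}{\partial y}$, as a single expression in which the cooperative-hunting contributions collapse into one sign-definite bracket, and then exploit $k_0=-k_1$ together with the assumed inequality. First I would differentiate $f^{(1)}$ and $f^{(2)}$ directly from \eqref{equ07}. Writing $u=u(x,y)=b+y+hx(\lambda+Ay)>0$, a direct computation gives
\[
x\,\frac{\partial f^{(1)}}{\partial x}=r_1 x\Bigl(1+\tfrac{k_0}{k_1}-\tfrac{2x}{k_1}\Bigr)+\frac{hxy(\lambda+Ay)^2}{u^2},
\qquad
y\,\frac{\partial f^{(2)}}{\partial y}=\frac{xy(Ab-\lambda)}{u^2},
\]
the second identity following because the $Ahx^2(\lambda+Ay)$ terms cancel in the quotient rule, leaving a numerator proportional to $x(Ab-\lambda)$. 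Adding the two and combining the fractions over the common denominator $u^2$ yields
\[
\operatorname{Tr}\bigl(J(x,y)\bigr)=r_1 x\Bigl(1+\tfrac{k_0}{k_1}-\tfrac{2x}{k_1}\Bigr)+\frac{xy\bigl[(Ab-\lambda)+h(\lambda+Ay)^2\bigr]}{u^2}.
\]

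Next I would impose the hypothesis $k_0=-k_1$, which forces $1+\tfrac{k_0}{k_1}=0$, so the polynomial part reduces to $-\tfrac{2r_1 x^2}{k_1}$, strictly negative for every $x\in(0,k_1)$. It then remains only to show the fractional term is negative on $\mathbf{D}^0$. Since $x,y>0$ and $u^2>0$, this amounts to establishing $(Ab-\lambda)+h(\lambda+Ay)^2<0$. For $(x,y)\in\mathbf{D}^0$ one has $0<y<M:=\max\{k_1,\max_{z\in[0,k_1]}g^{(1)}(z)\}$, and because $A,\lambda>0$ the map $y\mapsto(\lambda+Ay)^2$ is increasing on $(0,M)$, whence $(\lambda+Ay)^2<(\lambda+AM)^2$. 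The standing assumption $(Ab-\lambda)+(\lambda+AM)^2h<0$ then gives $(Ab-\lambda)+h(\lambda+Ay)^2<0$ uniformly on $\mathbf{D}^0$. Both summands of $\operatorname{Tr}(J)$ are therefore strictly negative, which proves the claim.

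I expect the only genuine obstacle to be the bookkeeping in the first step: one must differentiate the saturating functional response carefully and verify that the two contributions share the denominator $u^2$ and combine into a numerator proportional to $(Ab-\lambda)+h(\lambda+Ay)^2$, rather than a more tangled expression. Once that reduction is secured, the sign argument is immediate, and the role of the two assumptions becomes transparent: $k_0=-k_1$ eliminates the potentially positive linear-in-$x$ contribution of the Allee growth term, while the parameter inequality, through the monotone bound in $y$, renders the cooperative-hunting bracket negative throughout the rectangle $\mathbf{D}^0$.
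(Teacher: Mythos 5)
Your proposal is correct and follows essentially the same route as the paper: compute the two partial derivatives to write $\operatorname{Tr}(J)=r_1x\bigl(1+\tfrac{k_0}{k_1}-\tfrac{2x}{k_1}\bigr)+\tfrac{xy[(Ab-\lambda)+h(\lambda+Ay)^2]}{u^2}$, use $k_0=-k_1$ to reduce the first term to $-2r_1x^2/k_1\le 0$, and use the parameter inequality with the bound $y<\max\{k_1,\max_z g^{(1)}(z)\}$ on $\mathbf{D}^0$ to make the bracket negative. The computations (including the cancellation giving a numerator proportional to $x(Ab-\lambda)$) check out, so there is nothing to correct.
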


\begin{proof}
We first compute the relevant partial derivatives:
\begin{align*}
    \frac{\partial f^{(1)}}{\partial x} 
        &= r_1 \left( 1 + \frac{k_0}{k_1} - \frac{2x}{k_1} \right) 
           + \frac{(\lambda + Ay)^2 h y}{\bigl(b + y + h x (\lambda + Ay)\bigr)^2},\\[6pt]
    \frac{\partial f^{(2)}}{\partial y} 
        &= \frac{x (Ab - \lambda)}{\bigl(b + y + h x (\lambda + Ay)\bigr)^2}.
\end{align*}

Hence, the Jacobian trace is
\begin{align*}
    \operatorname{Tr}(J(x, y)) &= x \frac{\partial f^{(1)}}{\partial x} + y \frac{\partial f^{(2)}}{\partial y} \\
    &= r_1 x \left( 1 + \frac{k_0}{k_1} - \frac{2x}{k_1} \right) + \frac{(\lambda + Ay)^2 h x y}{\left(b + y + h x (\lambda + Ay)\right)^2} + \frac{(Ab - \lambda) x y}{\left(b + y + h x (\lambda + Ay)\right)^2} \\
    &= r_1 x \left( 1 + \frac{k_0}{k_1} - \frac{2x}{k_1} \right) + \frac{x y \left[ (\lambda + Ay)^2 h + (Ab - \lambda) \right]}{\left(b + y + h x (\lambda + Ay)\right)^2}.
\end{align*}

Substituting \( k_0 = -k_1 \) gives
\[
    r_1 x \left( 1 - \frac{k_1}{k_1} - \frac{2x}{k_1} \right) 
    = -\frac{2 r_1 x^2}{k_1},
\]
so that
\[
    \operatorname{Tr}(J(x, y)) 
    = -\frac{2 r_1 x^2}{k_1} 
      + \frac{x y \bigl[(\lambda + Ay)^2 h + (Ab - \lambda)\bigr]}{\bigl(b + y + h x (\lambda + Ay)\bigr)^2}.
\]
Since the first term is nonpositive, it follows that
\[
    \operatorname{Tr}(J(x, y)) 
    \leq \frac{x y \bigl[(\lambda + Ay)^2 h + (Ab - \lambda)\bigr]}{\bigl(b + y + h x (\lambda + Ay)\bigr)^2}.
\]

By the parameter condition
\[
   (Ab - \lambda) + \Bigl(\lambda + A \max\{k_1,\max_{z \in [0,k_1]} g^{(1)}(z)\}\Bigr)^{\!2} h < 0,
\]
we obtain
\[
   (\lambda + Ay)^2 h + (Ab - \lambda) < 0, 
   \qquad \text{for all } (x,y) \in \mathbf{D}^0.
\]

Since \(x > 0, y > 0\), and the denominator is positive in \(\mathbf{D}^0\), the inequality above implies
\[
    \operatorname{Tr}(J(x, y)) < 0, 
    \qquad \text{for all } (x, y) \in \mathbf{D}^0.
\]
This proves the lemma.
\end{proof}

Combining Theorem~\ref{globaldynamics} with Lemma~\ref{traceJacobian}, we obtain the following corollary.

\begin{corollary}\label{cor1}
Assume that \( k_0 = -k_1 \). If the parameters satisfy
\[
\lambda - Ab > 0, 
\qquad Ak_1 - s < 0, 
\qquad \frac{r_1}{s}\left(1+\frac{k_0}{k_1}\right) < \frac{A(\lambda-Ab)(1-sh)^2}{s^2},
\]
and
\[
h < \min \left\{ \frac{\lambda - Ab}{\left(\lambda + A \max\{\,k_1,\; \max_{z \in [0,k_1]} g^{(1)}(z)\} \right)^2}, \,
\frac{\lambda k_1 - s b}{s \lambda k_1} \right\},
\]
then system~\eqref{equ02} admits a unique globally asymptotically stable interior equilibrium.
\end{corollary}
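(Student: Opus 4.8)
The plan is to show that the four hypotheses of the corollary, under the standing assumption $k_0=-k_1$, imply \emph{all} of the parameter conditions required by Theorem~\ref{globaldynamics} together with the trace negativity $\operatorname{Tr}(J(x,y))<0$ on $\mathbf{D}^0$, the latter being supplied by Lemma~\ref{traceJacobian}. Once every hypothesis of Theorem~\ref{globaldynamics} is verified, its conclusion — existence of a unique, globally asymptotically stable interior equilibrium — is exactly the assertion of the corollary. Thus the proof is entirely a matter of translating the corollary's four inequalities into the conditions demanded downstream: the four from the theorem, plus the trace bound, plus membership in the weak Allee regime, plus a positivity check. First I would record that $k_0=-k_1<0$ places us in the weak Allee regime, as required by both Theorem~\ref{globaldynamics} and Lemma~\ref{traceJacobian}.

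The key observation is that the corollary's bound $h<\min\{\cdot,\cdot\}$ does double duty, one argument of the minimum feeding each downstream result. From $h<\frac{\lambda k_1-sb}{s\lambda k_1}$, multiplying through by $s\lambda k_1>0$ gives $sh\,\lambda k_1<\lambda k_1-sb$, which rearranges to $(1-sh)\lambda k_1>sb$ — precisely the third condition of Theorem~\ref{globaldynamics}. The same inequality also yields $sh<1-\frac{sb}{\lambda k_1}<1$, so $sh<1$; note that for this bound to admit a positive $h$ at all one needs $\lambda k_1-sb>0$, i.e.\ $\lambda k_1>sb$, which I would flag as the implicit positivity requirement. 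From the other argument, $h<\frac{\lambda-Ab}{(\lambda+A\max\{k_1,\max_{z\in[0,k_1]}g^{(1)}(z)\})^2}$, and since $\lambda-Ab>0$, clearing denominators gives $(\lambda+A\max\{\cdots\})^2h<\lambda-Ab$, i.e.\ $(Ab-\lambda)+(\lambda+A\max\{\cdots\})^2h<0$, which is exactly the hypothesis of Lemma~\ref{traceJacobian}. Applying that lemma then delivers $\operatorname{Tr}(J(x,y))<0$ throughout $\mathbf{D}^0$.

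It remains to check the first, second, and fourth conditions of Theorem~\ref{globaldynamics}. The first, $\lambda>Ab$, is the corollary's hypothesis $\lambda-Ab>0$ verbatim. The second, $Ak_1(sh-1)+s>0$, follows from $Ak_1-s<0$: since $Ak_1<s$ and $Ak_1\,sh\ge 0$, we get $Ak_1(sh-1)+s=Ak_1\,sh+(s-Ak_1)>0$. The fourth, $\frac{r_1}{s}(1+\frac{k_0}{k_1})<\frac{A(\lambda-Ab)(1-sh)^2}{s^2}$, is automatic here: substituting $k_0=-k_1$ makes the left-hand side $0$, while the right-hand side is strictly positive because $\lambda>Ab$ and $sh<1$; this matches the corollary's listed third hypothesis trivially. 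With all four theorem conditions and the trace bound in hand, Theorem~\ref{globaldynamics} yields the unique globally asymptotically stable interior equilibrium.

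The proof involves no genuinely hard analytic step; the only care needed is bookkeeping. The point worth emphasizing is that the \emph{same} quantity $\max\{k_1,\max_{z\in[0,k_1]}g^{(1)}(z)\}$ appears in the corollary's $h$-bound, in the definition of $\mathbf{D}^0$, and in Lemma~\ref{traceJacobian}, so the bound transfers to the lemma's hypothesis without any loss. Moreover, establishing $sh<1$ — needed both for the third condition and, via Theorem~\ref{two_equilibria} inside the proof of Theorem~\ref{globaldynamics}, for the at-most-two-equilibria count — hinges on the implicit positivity $\lambda k_1>sb$ that makes the $h$-bound nonvacuous. That is the subtlety I would be most careful to state explicitly.
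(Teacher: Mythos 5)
Your proposal is correct and follows exactly the route the paper intends: the corollary is stated as a direct combination of Theorem~\ref{globaldynamics} and Lemma~\ref{traceJacobian}, and your verification that the $h$-bound's two arguments respectively yield $(1-sh)\lambda k_1>sb$ (hence $sh<1$) and the trace condition of Lemma~\ref{traceJacobian}, together with the checks of the remaining three conditions, is precisely the bookkeeping the paper leaves implicit. Your observation that condition four of the theorem degenerates to $0<\text{positive}$ when $k_0=-k_1$ is also accurate.
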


In Corollary~\ref{cor1}, the assumption \( k_0 = -k_1 \) corresponds to the weak Allee effect. 
In contrast, under a strong Allee effect, system~\eqref{equ02} does not possess a globally stable interior equilibrium: the boundary equilibrium \( E_0 \) is stable by Theorem~\ref{thm03}. 
Nevertheless, in this regime, the system may admit a unique interior equilibrium which is unstable. 
The next theorem establishes conditions under which this occurs and shows that the predator population ultimately goes extinct.

\begin{theorem} \label{thm3.6}
Suppose system~\eqref{equ02} is subject to a strong Allee effect with \(1 - sh > 0\) and \(k_0 > \tfrac{k_1}{2}\).  
If either of the following conditions holds:
\begin{itemize}
    \item[(i)] \( \lambda - Ab < 0 \), \( g^{(2)}(k_1) < 0 \), \( g^{(2)}(k_0) > 0 \), and \( \operatorname{Tr}(J(x,y)) > 0 \) for all \( (x,y) \in \mathbf{D}^0 \);
    
    \item[(ii)] \( \lambda - Ab > 0 \), \( g^{(2)}(k_1) > 0 \), \( g^{(2)}(k_0) < 0 \), and \( \operatorname{Tr}(J(x,y)) > 0 \) for all \( (x,y) \in \mathbf{D}^0 \),
\end{itemize}
then system~\eqref{equ02} admits a unique interior equilibrium, which is unstable.  
Moreover, for any initial condition \( (x_0, y_0) \in \mathbb{R}_+^2 \), the solution satisfies
\[
   \lim_{t \to \infty} y(t) = 0,
\]
that is, the predator population goes extinct asymptotically.
\end{theorem}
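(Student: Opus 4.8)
The plan is to prove the three assertions in turn---uniqueness of the interior equilibrium, its instability, and $\lim_{t\to\infty}y(t)=0$---with the genuine difficulty concentrated in the last. For \emph{uniqueness} I would set $g(x)=g^{(2)}(x)-g^{(1)}(x)$ on $[k_0,k_1]$, with $g^{(1)},g^{(2)}$ as in \eqref{equ21}--\eqref{equ22}. Since $k_0>\tfrac{k_1}{2}$, the second-derivative computation in the proof of Theorem~\ref{two_equilibria} shows $g^{(1)}$ is concave on $[k_0,k_1]$, while $g^{(2)}$ is convex there in both regimes $\lambda\gtrless Ab$; hence $g$ is convex. Because $g^{(1)}(k_0)=g^{(1)}(k_1)=0$, one has $g(k_0)=g^{(2)}(k_0)$ and $g(k_1)=g^{(2)}(k_1)$, which under either (i) or (ii) carry opposite signs. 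The intermediate value theorem yields at least one zero in $(k_0,k_1)$, and convexity forces exactly one: a convex function with two zeros $a<b$ is $\le 0$ on $[a,b]$ and $\ge 0$ outside, so it cannot be strictly negative at the endpoint lying outside $[a,b]$. This produces a unique $x^*\in(k_0,k_1)$ with $y^*=g^{(1)}(x^*)>0$, i.e.\ a unique interior equilibrium $E^*\in\mathbf{D}^0$.

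For \emph{instability}, Lemma~\ref{lem02} gives $\operatorname{Tr}(J(E^*))=x\tfrac{\partial f^{(1)}}{\partial x}+y\tfrac{\partial f^{(2)}}{\partial y}>0$ by hypothesis, so the eigenvalues sum to a positive number and at least one has positive real part; hence $E^*$ is unstable. For the extinction step I would strengthen this to ``$E^*$ is a source'' by showing $\det(J(E^*))>0$. Using the determinant formula of Lemma~\ref{lem02}, the prey nullcline $y_1$ passes through $(k_0,0)$ and $(k_1,0)$ and is positive between them, whereas the predator nullcline $y_2=g^{(2)}$ is monotone with $g^{(2)}(k_0)$ and $g^{(2)}(k_1)$ of opposite sign; comparing the two curves at $x=k_0$ and $x=k_1$ pins down the sign of $\bigl(\tfrac{dy_2}{dx}-\tfrac{dy_1}{dx}\bigr)$ at $E^*$, and in each of cases (i),(ii) this combines with the sign of $\tfrac{\partial f^{(1)}}{\partial y}\tfrac{\partial f^{(2)}}{\partial y}$ to give $\det(J(E^*))>0$. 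Together with $\operatorname{Tr}(J(E^*))>0$ this makes $E^*$ a source.

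For \emph{extinction} I would first exclude periodic orbits exactly as in the proof of Theorem~\ref{globaldynamics}: with $D(x,y)=1/(xy)$ one has $\partial_x(Df_1)+\partial_y(Df_2)=D\operatorname{Tr}(J)>0$ on $\mathbf{D}^0$, so Dulac's criterion rules out periodic orbits in $\mathbf{D}^0$, and boundedness from Theorem~\ref{lem08} rules them out in $\mathbb{R}_+^2$. Next I would classify the boundary equilibria: $E_0$ is a stable node, and, translating $g^{(2)}(k_1)\gtrless 0$, $g^{(2)}(k_0)\lessgtr 0$ into the signs of $\zeta_2=f^{(2)}(k_1,0)$ and $\zeta_4=f^{(2)}(k_0,0)$ via the monotonicity of $f^{(2)}(x,\cdot)$, Theorem~\ref{thm03} shows $E_1$ and $E_2$ are both saddles. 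Finally Poincar\'e--Bendixson applies: every forward orbit is bounded, admits no periodic $\omega$-limit set, and cannot accumulate at the source $E^*$; hence every $\omega$-limit set lies in $\{y=0\}$, which forces $\lim_{t\to\infty}y(t)=0$.

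The decisive gap is in this last Poincar\'e--Bendixson step: the theorem still allows an $\omega$-limit set that is a heteroclinic \emph{graphic} rather than an equilibrium, the dangerous candidate being the loop formed by the unstable manifold $W^u(E_1)$ flowing into the stable manifold $W^s(E_2)$ and closing up along the axis orbit $E_2\to E_1$. Such a loop would enclose the source $E^*$ and attract all the orbits it emits, producing $\limsup_{t\to\infty}y(t)>0$ and destroying extinction. Ruling it out is the crux. The natural device is again the positive divergence---an attracting graphic bounding a region $R$ would give $\iint_R\operatorname{div}(Df)\,dA=0$ since the flow is tangent to $\partial R$---but $D=1/(xy)$ is singular on the axis portion of $\partial R$, so this requires either a careful limiting argument as $y\downarrow 0$ or a direct invariant-region/manifold analysis showing that $W^u(E_1)$ enters the basin of $E_0$ instead of reaching $E_2$. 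I expect essentially all of the genuine work to be concentrated here; the uniqueness and instability claims are comparatively routine.
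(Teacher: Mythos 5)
Your proposal follows the same route as the paper's proof: intermediate value theorem plus the concavity/convexity of $g^{(1)},g^{(2)}$ for uniqueness, the trace hypothesis together with Theorem~\ref{thm01} for instability, and the Dulac function $D=1/(xy)$ plus Poincar\'e--Bendixson for extinction. You are in fact more careful than the paper in two places. First, the paper deduces uniqueness from ``at least one (IVT) $+$ at most two (Theorem~\ref{two_equilibria})'', which is not a valid inference; your convexity argument (which is exactly where the hypothesis $k_0>k_1/2$ is used, since it makes $g^{(1)}$ concave on all of $[k_0,k_1]$) is the correct way to get exactly one root. Second, the paper stops at ``$E^*$ is unstable'' from $\operatorname{Tr}>0$, but this alone permits $E^*$ to be a saddle, in which case orbits on its one-dimensional stable manifold converge to $E^*$ with $y\to y^*>0$, contradicting the claimed extinction for \emph{all} initial data. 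Your determinant computation via Lemma~\ref{lem02}, upgrading $E^*$ to a source, is therefore not optional strengthening but a necessary ingredient the paper omits.

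The gap you flag in the final Poincar\'e--Bendixson step is genuine, and it is present (unacknowledged) in the paper's proof as well: after excluding periodic orbits, the $\omega$-limit set may still be a graphic, and the only admissible candidate is precisely the loop $W^u(E_1)\to W^s(E_2)$ closed by the axis orbit from $E_2$ to $E_1$ ($E_0$ is a stable node and $E^*$ a source, so neither can lie on a graphic). This is not a hypothetical configuration: the sign conditions on $g^{(2)}(k_0),g^{(2)}(k_1)$ in (i)--(ii) translate into $\zeta_2>0$, $\zeta_4<0$, which is exactly the regime of Theorem~\ref{Heteroclinic_orbit}, where the paper itself proves that $W^u(E_1)$ and $W^s(E_2)$ coincide for some critical $\lambda$. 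If that graphic were attracting from inside, orbits emitted by the source would accumulate on it and $\limsup_{t\to\infty}y(t)>0$. The cleanest patch is not the singular Dulac integral you mention but the saddle quantities: by continuity the hypothesis $\operatorname{Tr}(J)>0$ on $\mathbf{D}^0$ gives $\zeta_1+\zeta_2\ge 0$ at $E_1$ and $\zeta_3+\zeta_4\ge 0$ at $E_2$, so the product of hyperbolicity ratios $\frac{-\zeta_1}{\zeta_2}\cdot\frac{-\zeta_4}{\zeta_3}\le 1$; when the inequality is strict the graphic is repelling from inside and cannot be an $\omega$-limit set, which completes the argument (the borderline case of equality would need an extra hypothesis). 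As written, both your proposal and the paper's proof leave this step open; you at least identify it correctly.
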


\begin{proof}
We prove the result for case (i); the argument for case (ii) is analogous and is omitted.  

From \eqref{equ21}, we have
\[
g^{(1)}(k_0) = g^{(1)}(k_1) = 0.
\]
Since \( g^{(2)}(k_1) < 0 \) and \( g^{(2)}(k_0) > 0 \), the intermediate value theorem implies that the equation
\[
g^{(1)}(x) = g^{(2)}(x)
\]
has a unique solution in \( (k_0, k_1) \). Thus system~\eqref{equ02} admits at least one interior equilibrium.  

On the other hand, because \(1 - sh > 0\) and \(k_0 > \tfrac{k_1}{2}\), Theorem~\ref{two_equilibria} guarantees that there can be at most two interior equilibria. Combining these facts, we conclude that system~\eqref{equ02} possesses a unique interior equilibrium.  

Next, we analyze its stability. Since \(\operatorname{Tr}(J(x,y)) > 0\) for all \((x,y) \in \mathbf{D}^0\), Theorem~\ref{thm01} implies that this interior equilibrium is unstable.

It remains to establish the asymptotic behavior of trajectories. Consider the Dulac function \(D(x,y) = 1/(xy)\), as in the proof of Theorem~\ref{globaldynamics}. Then
\[
\frac{\partial (D f_1)}{\partial x} + \frac{\partial (D f_2)}{\partial y} 
= D(x,y) \, \operatorname{Tr}(J(x,y)) > 0,
\]
for all \((x,y) \in \mathbf{D}^0\). By Dulac's criterion, system~\eqref{equ02} admits no periodic orbits in \(\mathbf{D}^0\).  

Since the system has a unique unstable interior equilibrium, admits no periodic solutions, and evolves within the positively invariant compact set \(\mathbf{B}^\delta\) (Theorem~\ref{lem08}), the Poincaré–Bendixson theorem implies that every trajectory converges to one of the boundary equilibria:  
\[
E_0 = (0,0), \quad E_1 = (k_1,0), \quad \text{or} \quad E_2 = (k_0,0).
\]
In each case, the predator population vanishes, i.e.,
\[
\lim_{t \to \infty} y(t) = 0.
\]
This completes the proof.
\end{proof}

\section{Local and Global Bifurcation Analysis}\label{sec4}

This section investigates the bifurcation structure of system~\eqref{equ02}. We begin by analyzing \textit{local bifurcations}, which occur when small changes in a system parameter cause qualitative changes in the nature or stability of equilibria. Specifically, we consider transcritical, saddle-node, and Hopf bifurcations. In the latter part of the section, we explore \textit{global bifurcations}, including heteroclinic bifurcations and transcritical bifurcations occurring on invariant cycles, which involve global changes in the topology of the phase portrait.

\subsection{Transcritical and Saddle-Node Bifurcations}\label{Transcritical}

A \emph{transcritical bifurcation} is a local bifurcation where two equilibrium branches intersect and exchange their stability as a parameter passes through a critical value. In the generic case, the equilibria coincide at the bifurcation point. In degenerate cases, both equilibria may persist independently of the parameter, but their stability characteristics switch at the bifurcation value.

In contrast, a \emph{saddle-node bifurcation} occurs when two equilibrium branches coalesce into a single non-hyperbolic equilibrium at a critical parameter value. For values of the parameter on one side of the bifurcation, two distinct equilibria exist (typically one saddle and one node); on the other side, no nearby equilibria exist.

The following lemma provides conditions for identifying these bifurcation types.

\begin{lemma}[Transcritical and Saddle-Node Bifurcations \cite{kuznetsov1998elements,perko2013differential}]\label{lem03}
Consider the parameter-dependent system
\[
\dot{X} = f(X, \kappa),
\]
where \( f \in C^2(\mathbb{R}^n \times \mathbb{R}, \mathbb{R}^n) \), \( X \in \mathbb{R}^n \), and \( \kappa \in \mathbb{R} \) is a bifurcation parameter. Suppose the following:
\begin{itemize}
    \item[(a)] \( f(X_0, \mu_0) = 0 \) for some \( (X_0, \mu_0) \in \mathbb{R}^n \times \mathbb{R} \),
    \item[(b)] The Jacobian matrix \( J = D_X f(X_0, \mu_0) \) has a simple eigenvalue \( \gamma = 0 \), and all other eigenvalues have nonzero real parts,
    \item[(c)] Let \( \mathbf{V} \) and \( \mathbf{W} \) be the corresponding right and left eigenvectors (i.e., $J\mathbf{V}=\mathbf{W}^T J=0$), normalized such that \( \mathbf{W}^T \mathbf{V} = 1 \).
\end{itemize}
Then,
\begin{itemize}
    \item[(i)] A transcritical bifurcation occurs at \( (X_0, \mu_0) \) if the following conditions hold:
    \begin{align}
        & \mathbf{W}^T f_{\mu}(X_0, \mu_0) = 0, \label{trans_cond1}\\
        & \mathbf{W}^T [ D_X f_{\mu}(X_0, \mu_0) \, \mathbf{V} ] \neq 0, \label{trans_cond2}\\
        & \mathbf{W}^T [ D_X^2 f(X_0, \mu_0)(\mathbf{V}, \mathbf{V}) ] \neq 0. \label{trans_cond3}
    \end{align}

    \item[(ii)] A saddle-node bifurcation occurs at \( (X_0, \mu_0) \) if:
    \begin{align}
        & \mathbf{W}^T f_{\mu}(X_0, \mu_0) \neq 0, \label{saddle_cond1}\\
        & \mathbf{W}^T [ D_X^2 f(X_0, \mu_0)(\mathbf{V}, \mathbf{V}) ] \neq 0. \label{saddle_cond2}
    \end{align}
\end{itemize}
Here,
 \( f_{\mu} \) denotes the partial derivative of \( f \) with respect to \( \mu \),
 \( D_X f_{\mu} \) denotes the Jacobian of \( f_{\mu} \) with respect to \( X \),
and  \( D_X^2 f \) denotes the second-order derivative of \( f \) with respect to \( X \), applied to two vectors \( (\mathbf{V}, \mathbf{V}) \).
\end{lemma}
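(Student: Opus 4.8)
The plan is to prove this by a Lyapunov--Schmidt (equivalently, center-manifold) reduction to a scalar bifurcation equation, which is the standard route to Sotomayor's theorem. Since the eigenvalue $\gamma=0$ of $J$ is simple and all other eigenvalues lie off the imaginary axis, $\mathbb{R}^n$ splits as $\mathbb{R}^n=\operatorname{span}(\mathbf{V})\oplus\operatorname{Range}(J)$, with $\operatorname{Range}(J)=\{\mathbf{W}\}^{\perp}$. Let $P=I-\mathbf{V}\mathbf{W}^T$ be the projection onto $\operatorname{Range}(J)$ along $\operatorname{span}(\mathbf{V})$, which is well defined because $\mathbf{W}^T\mathbf{V}=1$. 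Writing $X=X_0+u\mathbf{V}+\mathbf{w}$ with $u\in\mathbb{R}$ and $\mathbf{w}\in\operatorname{Range}(J)$, the equation $f(X,\mu)=0$ is equivalent to the pair $Pf=0$ and $\mathbf{W}^T f=0$.

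First I would solve the regular part $Pf(X_0+u\mathbf{V}+\mathbf{w},\mu)=0$ for $\mathbf{w}=\mathbf{w}(u,\mu)$ by the implicit function theorem: its linearization in $\mathbf{w}$ at $(0,0,\mu_0)$ is $J$ restricted to $\operatorname{Range}(J)$, which is invertible precisely because $0$ is a simple eigenvalue. This yields a $C^2$ map with $\mathbf{w}(0,\mu_0)=0$, and a short computation using $J\mathbf{V}=0$ and $\mathbf{w}\in\operatorname{Range}(J)$ gives $\mathbf{w}_u(0,\mu_0)=0$. Substituting produces the scalar bifurcation equation $G(u,\mu):=\mathbf{W}^T f\bigl(X_0+u\mathbf{V}+\mathbf{w}(u,\mu),\mu\bigr)=0$, whose zero set is locally in one-to-one correspondence with the equilibria of the system near $(X_0,\mu_0)$.

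The key step is to read off the derivatives of $G$ at $(0,\mu_0)$ and match them to the hypotheses. Because $\mathbf{W}^T J=0$ annihilates every term carrying a factor of $J$ (in particular the contributions from $\mathbf{w}_u$ and $\mathbf{w}_{uu}$, using $\mathbf{w}_u(0,\mu_0)=0$), I obtain $G(0,\mu_0)=0$, the automatic degeneracy $G_u(0,\mu_0)=\mathbf{W}^T J(\mathbf{V}+\mathbf{w}_u)=0$, together with
$$G_\mu(0,\mu_0)=\mathbf{W}^T f_\mu(X_0,\mu_0),\qquad G_{uu}(0,\mu_0)=\mathbf{W}^T\bigl[D_X^2 f(X_0,\mu_0)(\mathbf{V},\mathbf{V})\bigr].$$
For the saddle-node case, conditions \eqref{saddle_cond1}--\eqref{saddle_cond2} say exactly $G_\mu\neq0$ and $G_{uu}\neq0$; since $G(0,\mu_0)=G_u(0,\mu_0)=0$, the implicit function theorem solves $G=0$ as $\mu=h(u)$ with $h'(0)=0$ and $h''(0)=-G_{uu}/G_\mu\neq0$, i.e. a quadratic fold producing two equilibria on one side of $\mu_0$ and none on the other. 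For the transcritical case, \eqref{trans_cond1} forces the extra degeneracy $G_\mu(0,\mu_0)=0$, so $(0,\mu_0)$ is a critical point of $G$; the remaining conditions \eqref{trans_cond2}--\eqref{trans_cond3} identify the mixed and pure second derivatives, and one shows the zero set of $G$ consists of two curves crossing transversally at $(0,\mu_0)$, which is the transcritical bifurcation.

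The main obstacle is the transcritical case, for two reasons. First, unlike $G_\mu$ and $G_{uu}$, the cross derivative $G_{u\mu}(0,\mu_0)$ is not simply $\mathbf{W}^T[D_X f_\mu(X_0,\mu_0)\mathbf{V}]$: the reduction correction $\mathbf{w}_\mu(0,\mu_0)=-(J|_{\operatorname{Range}(J)})^{-1}f_\mu$, which is generically nonzero even though $\mathbf{W}^T f_\mu=0$, contributes an extra term $\mathbf{W}^T[D_X^2 f(\mathbf{w}_\mu,\mathbf{V})]$, so I must track this correction explicitly and verify that \eqref{trans_cond2} delivers the transversality actually required. Second, to conclude genuine branch-crossing rather than a cusp I must control the Hessian of $G$ at the degenerate critical point via a Morse-lemma/factorization argument, which is where the non-vanishing of $G_{uu}$ and of the mixed derivative are used jointly. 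Finally, in both cases the qualitative labels (saddle versus node for the saddle-node, exchange of stability for the transcritical) follow from continuity of the spectrum: the $n-1$ non-critical eigenvalues stay off the imaginary axis along the branches while the simple critical eigenvalue $\gamma$ changes sign, so I would close the proof by tracking the sign of $\gamma$ along each equilibrium branch.
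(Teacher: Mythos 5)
A preliminary remark on the comparison you were asked to make: the paper does not prove this lemma at all --- it is Sotomayor's theorem, quoted from Kuznetsov and Perko --- so your Lyapunov--Schmidt reduction is not an alternative to the paper's argument but a proof where the paper supplies none. Your architecture is the standard one, and the saddle-node half is essentially complete and correct: the splitting $\mathbb{R}^n=\operatorname{span}(\mathbf{V})\oplus\operatorname{Range}(J)$ with $\operatorname{Range}(J)=\{\mathbf{W}\}^{\perp}$, the projection $P=I-\mathbf{V}\mathbf{W}^T$, the identities $\mathbf{w}_u(0,\mu_0)=0$, $G_u(0,\mu_0)=0$, $G_\mu(0,\mu_0)=\mathbf{W}^Tf_\mu$, $G_{uu}(0,\mu_0)=\mathbf{W}^T[D_X^2f(\mathbf{V},\mathbf{V})]$, and the fold $\mu=h(u)$ with $h'(0)=0$, $h''(0)=-G_{uu}/G_\mu\neq0$ all check out.

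The transcritical half, however, is not yet a proof, and the obstacle you flag is a genuine gap rather than a routine verification. Two things are missing. First, as you yourself observe, $G_{u\mu}(0,\mu_0)=\mathbf{W}^T[D_Xf_\mu\mathbf{V}]+\mathbf{W}^T\bigl[D_X^2f(\mathbf{V},\mathbf{w}_\mu)\bigr]$ with $\mathbf{w}_\mu=-(J|_{\operatorname{Range}(J)})^{-1}f_\mu$ generically nonzero, so condition \eqref{trans_cond2} does not by itself yield $G_{u\mu}\neq0$; you announce that this must be "tracked explicitly" but never do so, and this is precisely the step on which the transcritical conclusion hinges. Second, even granting $G_{u\mu}\neq0$, the conclusion "two equilibrium branches crossing transversally" requires the Hessian of $G$ at the degenerate critical point to be indefinite, i.e. $G_{u\mu}^2-G_{uu}G_{\mu\mu}>0$, and nothing in hypotheses \eqref{trans_cond1}--\eqref{trans_cond3} controls $G_{\mu\mu}$; with $G_{uu}\neq0$ and $G_{u\mu}\neq0$ alone the zero set of $G$ could still be an isolated point. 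The standard way to close both gaps --- and the situation in which this lemma is actually invoked in the paper --- is to use the additional information that a branch of equilibria passes through $(X_0,\mu_0)$ for all parameter values (here the predator-free axis is invariant and its equilibria $E_0,E_1,E_2$ are independent of $\lambda$), so that $G$ vanishes identically along a known curve, factors as a linear term times a smooth function, and \eqref{trans_cond2}--\eqref{trans_cond3} become exactly the nondegeneracy of the second branch and of the crossing. Without that extra input, or an explicit computation showing the $\mathbf{w}_\mu$ correction is harmless and an explicit discriminant hypothesis, your transcritical case stops where the real work begins.
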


In what follows, we analyze codimension-1 bifurcations in system \eqref{equ02} occurring at the boundary equilibria
\begin{equation*} 
E_0 = (0, 0), \quad E_1 = (k_1, 0), \quad \text{and} \quad E_2 = (k_0, 0),
\end{equation*}
as the bifurcation parameter \( \lambda \) varies, while all other parameters remain fixed.

First, we present our main result on transcritical bifurcation.

\begin{theorem} \label{thm4.1}
    The following statements hold regarding transcritical bifurcations in system \eqref{equ02}:
    \begin{itemize}
        \item[(i)] No transcritical bifurcation occurs at the equilibrium $E_0$  as $\lambda$ varies. 
        \item[(ii)] In the case of strong Allee effect, assume that  
        $$
        \begin{cases}
            [sbk_1-2k_0^2r_1(k_1-k_0)]\lambda_0+2Abk_0^2r_1(k_1-k_0)\neq 0,\\
            \lambda_0>0,
        \end{cases}$$
        where $$\lambda_0 = \frac{sb}{k_0(1 - sh)}.$$
        Then, a transcritical bifurcation occurs between the boundary equilibrium $E_2$ and an interior equilibrium
          when  $\lambda = \lambda_0$.
        \item[(iii)] Assume that 
        $1 - sh > 0$, and
                $$
        \begin{cases}
            \frac{sb k_1(b+hk_1\lambda_1)^2 \lambda_1}{r_1k_1(k_0-k_1)}+\lambda_1-Ab \neq 0,\\
            \lambda_1>0,
        \end{cases}$$
       where
       $$\lambda_1 = \frac{sb}{k_1(1 - sh)}.$$

        Then, a transcritical bifurcation occurs between the boundary equilibrium  $E_1$ and an interior equilibrium  when $\lambda = \lambda_1$.
    \end{itemize}
\end{theorem}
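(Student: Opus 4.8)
The plan is to apply the transcritical criterion of Lemma~\ref{lem03}(i) with bifurcation parameter $\mu=\lambda$, state variable $X=(x,y)$, and vector field $f=(f_1,f_2)$ from~\eqref{eq_f1f1}. For each boundary equilibrium I would (1) locate the critical value of $\lambda$ at which a single Jacobian eigenvalue crosses zero, (2) exhibit the right and left null vectors $\mathbf V,\mathbf W$ normalized so that $\mathbf W^{T}\mathbf V=1$, and (3) verify the three nondegeneracy conditions \eqref{trans_cond1}--\eqref{trans_cond3}. Part (i) is the exception: there I would instead show that hypothesis (b) of Lemma~\ref{lem03} can \emph{never} be met. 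Indeed, Theorem~\ref{thm03} gives $J(E_0)=\operatorname{diag}(-r_1k_0,-s)$, a matrix in which $\lambda$ does not appear. Hence its eigenvalues $-r_1k_0$ and $-s$ stay fixed as $\lambda$ varies, and since $k_0\neq 0$ in either Allee regime neither is ever zero. Thus $J(E_0)$ never acquires the simple zero eigenvalue required by (b), so no transcritical bifurcation (in fact no local codimension-one bifurcation) can occur at $E_0$ as $\lambda$ alone is varied.

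Parts (ii) and (iii) are structurally identical, so I would handle $E_*=(k_*,0)$ with $k_*\in\{k_0,k_1\}$ in one stroke. By Theorem~\ref{thm03}, $J(E_*)$ is upper triangular with nonzero diagonal entry $\zeta_{11}$ (negative for $E_1$, positive for $E_2$) and with $(2,2)$ entry $\zeta=\tfrac{\lambda k_*}{b+hk_*\lambda}-s$. Setting $\zeta=0$ gives $\lambda k_*(1-sh)=sb$, that is $\lambda=\tfrac{sb}{k_*(1-sh)}$, which is precisely $\lambda_1$ for $k_*=k_1$ and $\lambda_0$ for $k_*=k_0$; the hypotheses $sh<1$ and $\lambda_0,\lambda_1>0$ ensure this critical value is admissible. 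At criticality the off-diagonal entry equals $-s$, the zero eigenvalue is simple because $\zeta_{11}\neq 0$, so (b) holds; the left null vector is $\mathbf W=(0,1)^{T}$ and the right null vector is $\mathbf V=(s/\zeta_{11},\,1)^{T}$.

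Because $\mathbf W$ selects the second component, every condition reduces to a scalar built from $f_2$. First, $f_2$ carries an overall factor $y$, so $\partial_\lambda f_2$ vanishes on the $x$-axis; this gives $\mathbf W^{T}f_\lambda(E_*,\lambda)=0$, which is \eqref{trans_cond1} and is exactly what excludes a saddle-node and forces the transcritical scenario. Next I would compute $\partial_\lambda f_2=\tfrac{xy(b+y)}{[b+y+hx(\lambda+Ay)]^{2}}$; only its $\partial_y$-part survives at $y=0$, yielding $\mathbf W^{T}[D_Xf_\lambda\,\mathbf V]=\tfrac{k_*b}{(b+hk_*\lambda)^{2}}\neq 0$, which is \eqref{trans_cond2}. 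Finally, \eqref{trans_cond3} is the Hessian quadratic form $\tfrac{\partial^{2}f_2}{\partial x^{2}}v_1^{2}+2\tfrac{\partial^{2}f_2}{\partial x\partial y}v_1v_2+\tfrac{\partial^{2}f_2}{\partial y^{2}}v_2^{2}$ evaluated at $E_*$. Using \eqref{f2xx}, \eqref{f2xy}, and the expression for $\partial^{2}f_2/\partial y^{2}$ given above, the $\partial_{xx}$ term drops out (it has a factor $y$), while $\partial^{2}_{xy}f_2=\tfrac{\lambda b}{(b+hk_*\lambda)^{2}}$ and $\partial^{2}_{yy}f_2=\tfrac{2k_*(Ab-\lambda)}{(b+hk_*\lambda)^{2}}$. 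Substituting $\mathbf V$ and eliminating $\lambda$ via the criticality relation $\tfrac{\lambda k_*}{b+hk_*\lambda}=s$ collapses the form to a nonzero multiple of the displayed nondegeneracy quantity, which is assumed nonzero; this establishes the transcritical bifurcation between $E_*$ and an interior branch.

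The eigenvector bookkeeping and conditions \eqref{trans_cond1}--\eqref{trans_cond2} are routine, since $y=0$ annihilates most terms. The delicate step is \eqref{trans_cond3}: one must insert the second-order partials from \eqref{f2xx}--\eqref{f2xy}, substitute $\mathbf V=(s/\zeta_{11},1)^{T}$, and carefully invoke the criticality relation to reduce the quadratic form to exactly the stated inequality. Keeping the algebra consistent---in particular tracking the sign of $\zeta_{11}$, which differs between $E_1$ and $E_2$---is where errors are most likely, and it is precisely this reduction that produces the two distinct nondegeneracy conditions in (ii) and (iii). A secondary check is that at the critical $\lambda$ the partner equilibrium genuinely enters $\mathbb{R}_+^2$, so that the exchange is with an \emph{interior} equilibrium; this is where $\lambda_0>0$, $\lambda_1>0$, and $sh<1$ are used.
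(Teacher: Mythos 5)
Your proposal follows essentially the same route as the paper: apply Lemma~\ref{lem03} with $\mathbf W=(0,1)^{T}$ and $\mathbf V=(s/\zeta_{11},1)^{T}$, use the overall factor $y$ in $f_2$ to obtain \eqref{trans_cond1}, and reduce \eqref{trans_cond2}--\eqref{trans_cond3} to the stated scalar quantities; your argument for (i) (that $J(E_0)=\operatorname{diag}(-r_1k_0,-s)$ does not depend on $\lambda$, so no eigenvalue can cross zero) is just a more explicit version of the paper's. The one point to watch is that the paper evaluates $D_X^2 f(\mathbf V,\mathbf V)$ \emph{without} the factor $2$ on the mixed term $f_{xy}v_1v_2$, so your standard quadratic form with the $2$ would produce a nondegeneracy constant in which the $sbk_1\lambda_0$ term of (ii) (and the corresponding term of (iii)) is doubled relative to the hypothesis as displayed.
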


\begin{proof}

{\bf Case} {\em (i)}:  
    From Theorems \ref{thm03} and \ref{thm03}, the equilibrium $E_0$ is stable under the strong Allee effect and a saddle point under the weak Allee effect. Consequently, a transcritical bifurcation does not occur at $E_0$. 
 
{\bf Case} {\em (ii)}:  
In the proof of Theorem~\ref{thm03}, we showed that the Jacobian matrix $J(E_2)$ at the equilibrium point $E_2$  has an eigenvalue given by
$$\frac{\lambda k_0}{b+hk_0 \lambda}-s.$$ Therefore, when
$\lambda = \lambda_0,$
this eigenvalue becomes zero. To verify the conditions for a transcritical bifurcation as stated in Lemma~\ref{lem03}, we compute the eigenvectors corresponding to the zero eigenvalue of both the Jacobian $J$ and its transpose $J^T$ at $E_2$:
    \[
    {\bf V} =\begin{pmatrix} v_1 \\ v_2 \end{pmatrix}
     = \begin{pmatrix} \frac{s}{r_1k_0(1 - \frac{k_0}{k_1})} \\ 1 \end{pmatrix}, \quad
    {\bf W} = \begin{pmatrix} 0 \\ 1 \end{pmatrix}.
    \]
 Define the function:
    \[
    {\bf f} =
    \begin{pmatrix}
        f_1 \\ f_2
    \end{pmatrix}
    =
    \begin{pmatrix}
        r_1x(1 - \frac{x}{k_1})(x - k_0) - \frac{(\lambda + Ay)xy}{b + y + hx(\lambda + Ay)} \\
        \frac{(\lambda + Ay)xy}{b + y + hx(\lambda + Ay)} - sy
    \end{pmatrix}.
    \]
The derivative of ${\bf f}$ with respect to $\lambda$ is:
    \[
    {\bf f_{\lambda}} =
     \begin{pmatrix}
        f_{1\lambda} \\ f_{2\lambda} 
    \end{pmatrix}
    =
    \begin{pmatrix}
        -\frac{xy(b + y + hx(\lambda + Ay)) - hx^2y(\lambda + Ay)}{(b + y + hx(\lambda + Ay))^2} \\
        \frac{xy(b + y + hx(\lambda + Ay)) - hx^2y(\lambda + Ay)}{(b + y + hx(\lambda + Ay))^2}
    \end{pmatrix}.
    \]
    Evaluating ${\bf f_{\lambda}}$ at $E_2$ (where $x = k_0$ and $y = 0$) gives ${\bf f_{\lambda}} = (0,0)^T$, satisfying the first condition \eqref{trans_cond1} of Lemma \ref{lem03}.

Next, computing the Jacobian of ${\bf f_{\lambda}}$ at $E_2$:
    \[
    D({\bf f_{\lambda}}(E_2)) =
    \begin{pmatrix}
    \frac{\partial f_{1\lambda}}{\partial x} & \frac{\partial f_{1\lambda}}{\partial y} \\
    \frac{\partial f_{2\lambda}}{\partial x} & \frac{\partial f_{2\lambda}}{\partial y}
    \end{pmatrix}
    =
    \begin{pmatrix}
        0 & \frac{-k_0 b}{(b + y + hx(\lambda + Ay))^4} \\
        0 & \frac{k_0 b}{(b + y + hx(\lambda + Ay))^4}
    \end{pmatrix}.
    \]
This leads to
\begin{align*}
    {\bf W}^T[D{\bf f_{\lambda}}(E_2){\bf V}] & = (0, 1) \Big[ \begin{pmatrix}
        0 & \frac{-k_0 b}{(b+hk_0\lambda)^4} \\
        0 & \frac{k_0 b}{(b+hk_0\lambda)^4}
    \end{pmatrix} \begin{pmatrix}
        r_1k_0(1-\frac{k_0}{k_1}) \\ 1
    \end{pmatrix} \Big] \\
    & = (0,1) \begin{pmatrix}
          \frac{-k_0 b}{(b+hk_0\lambda)^4} \\
          \frac{k_0 b}{(b+hk_0\lambda)^4}
    \end{pmatrix}
    = \frac{k_0 b}{(b+hk_0\lambda)^4} \neq 0.
\end{align*}
Therefore, the second condition \eqref{trans_cond2} of Lemma \ref{lem03} is satisfied.

Finally, for the third condition \eqref{trans_cond3} of Lemma \ref{lem03}, we calculate
\begin{align*}
    {\bf W}^T[D^2{\bf f}(E_2){(\bf V, V)}] 
    & = (0,1) \begin{pmatrix}
          \frac{\partial^2 f_1}{\partial x^2}v_1^2 + \frac{\partial^2 f_1}{\partial x \partial y}v_1v_2 + \frac{\partial^2 f_1}{\partial y^2}v_2^2 \\
           \frac{\partial^2 f_2}{\partial x^2}v_1^2 + \frac{\partial^2 f_2}{\partial x \partial y}v_1v_2 + \frac{\partial^2 f_2}{\partial y^2}v_2^2
    \end{pmatrix}\Big |_{E_2}\\
   & = \frac{\partial^2 f_2}{\partial x^2}v_1^2 + \frac{\partial^2 f_2}{\partial x \partial y}v_1v_2 + \frac{\partial^2 f_2}{\partial y^2}v_2^2 \Big |_{E_2}.
\end{align*}

    At $E_2(k_0,0)$:
    \[
    \frac{\partial^2 f_2}{\partial x^2} = 0, \quad
    \frac{\partial^2 f_2}{\partial x \partial y} = \frac{b\lambda}{(b + hk_0\lambda)^2}, \quad
    \frac{\partial^2 f_2}{\partial y^2} = \frac{2k_0(Ab - \lambda)}{(b + hk_0\lambda)^2}.
    \]
Thus, 
    \[
    {\bf W}^T[D^2{\bf f}(E_2)({\bf V}, {\bf V})]= \frac{[sbk_1-2k_0^2r_1(k_1-k_0)]\lambda+2Abk_0^2r_1(k_1-k_0)}{r_1k_0(b+hk_0\lambda)^2(k_1-k_0)},
    \]
    At the critical value $\lambda=\lambda_0,$
    \[
    {\bf W}^T[D^2{\bf f}(E_2)({\bf V}, {\bf V})]=  \frac{[sbk_1-2k_0^2r_1(k_1-k_0)]\lambda_0+2Abk_0^2r_1(k_1-k_0)}{r_1k_0(b+hk_0\lambda_0)^2(k_1-k_0)}\neq 0.
    \]   
    Hence, all conditions of Lemma \ref{lem03} are satisfied, confirming a transcritical bifurcation at $\lambda=\lambda_0$ between $E_2$ and an interior equilibrium.

{\bf Case} {\em (iii)}:   
From Theorems \ref{thm03} and \ref{thm03}, if $\lambda = \lambda_1$, one eigenvalue of the Jacobian matrix at $E_1$ becomes zero. To verify the conditions for a transcritical bifurcation in Lemma \ref{lem03}, we calculate the eigenvectors corresponding to the zero eigenvalue of both the Jacobian $J$ and its transpose $J^T$ at $E_1$:

    \[
    {\bf V_1} =\begin{pmatrix} v_3 \\ v_4 \end{pmatrix}
     = \begin{pmatrix} \frac{s}{r_1k_1(\frac{k_0}{k_1}-1)} \\ 1 \end{pmatrix}, \quad
    {\bf W} = \begin{pmatrix} 0 \\ 1 \end{pmatrix}.
    \]

Evaluating ${\bf f_{\lambda}}$ at $E_1$ (where $x = k_1$ and $y = 0$), we find ${\bf f_{\lambda}} = (0,0)^T$, satisfying the first condition \eqref{trans_cond1} of Lemma \ref{lem03}.

In addition, we have  
\begin{align*}
    {\bf W}^T[D{\bf f_{\lambda}}(E_1){\bf V_1}] & = (0, 1) \Big[ \begin{pmatrix}
        0 & \frac{-k_1 b}{(b+hk_0\lambda)^4} \\
        0 & \frac{k_1 b}{(b+hk_0\lambda)^4}
    \end{pmatrix} \begin{pmatrix}
        \frac{s}{r_1k_1(\frac{k_0}{k_1}-1)} \\ 1
    \end{pmatrix} \Big] \\
    & = (0,1) \begin{pmatrix}
          \frac{-k_1 b}{(b+hk_0\lambda)^4} \\
          \frac{k_1 b}{(b+hk_0\lambda)^4}
    \end{pmatrix}
    = \frac{k_1 b}{(b+hk_0\lambda)^4} \neq 0.
\end{align*}
Therefore, the second condition \eqref{trans_cond2} of Lemma \ref{lem03} is satisfied.

Finally, for the third condition \eqref{trans_cond3} of Lemma \ref{lem03}, we have
\begin{align*}
    {\bf W}^T[D^2{\bf f}(E_1){(\bf V_1, V_1)}] 
    & = (0,1) \begin{pmatrix}
          \frac{\partial^2 f_1}{\partial x^2}v_3^2 + \frac{\partial^2 f_1}{\partial x \partial y}v_3v_4 + \frac{\partial^2 f_1}{\partial y^2}v_4^2 \\
           \frac{\partial^2 f_2}{\partial x^2}v_3^2 + \frac{\partial^2 f_2}{\partial x \partial y}v_3v_4 + \frac{\partial^2 f_2}{\partial y^2}v_4^2
    \end{pmatrix}\Big |_{E_1}\\
   & = \frac{\partial^2 f_2}{\partial x^2}v_3^2 + \frac{\partial^2 f_2}{\partial x \partial y}v_3v_4 + \frac{\partial^2 f_2}{\partial y^2}v_4^2 \Big |_{E_1}.
\end{align*}
At $E_1(k_1,0)$, 
\begin{align*}
    \frac{\partial^2 f_2}{\partial x^2} = 0, \qquad \frac{\partial^2 f_2}{\partial x \partial y} = \frac{b\lambda}{(b+hk_1\lambda)^2}, \qquad \frac{\partial^2 f_2}{\partial y^2} = \frac{\lambda-Ab}{(b+hk_1\lambda )^4}.
\end{align*}
Thus, 
$${\bf W}^T[D^2{\bf f}(E_1){(\bf V_1, V_1)}]=\left[\frac{sb\lambda k_1(b+hk_1\lambda)^2}{r_1k_1(k_0-k_1)}+\lambda-Ab\right] \frac{1}{(b+hk_1\lambda )^4}.$$

Therefore, at the critical value 
$\lambda=\lambda_1,$ 
$${\bf W}^T[D^2{\bf f}(E_1){(\bf V_1, V_1)}] \neq 0.$$

Since all conditions of Lemma \ref{lem03} are satisfied,  a transcritical bifurcation occurs between the boundary equilibrium  $E_1$ and an interior equilibrium  when $\lambda = \lambda_1$.

This completes the proof.
\end{proof}

Let $\lambda_{TB}$ denote the critical value of the parameter $\lambda$ at which a transcritical bifurcation occurs in system \eqref{equ02}. This value can be determined numerically. Figure \ref{figure2} illustrates examples of transcritical bifurcations and provides approximate values for $\lambda_{TB}$.
\begin{figure}[H]
  \begin{center}  
    \includegraphics[scale=0.46]{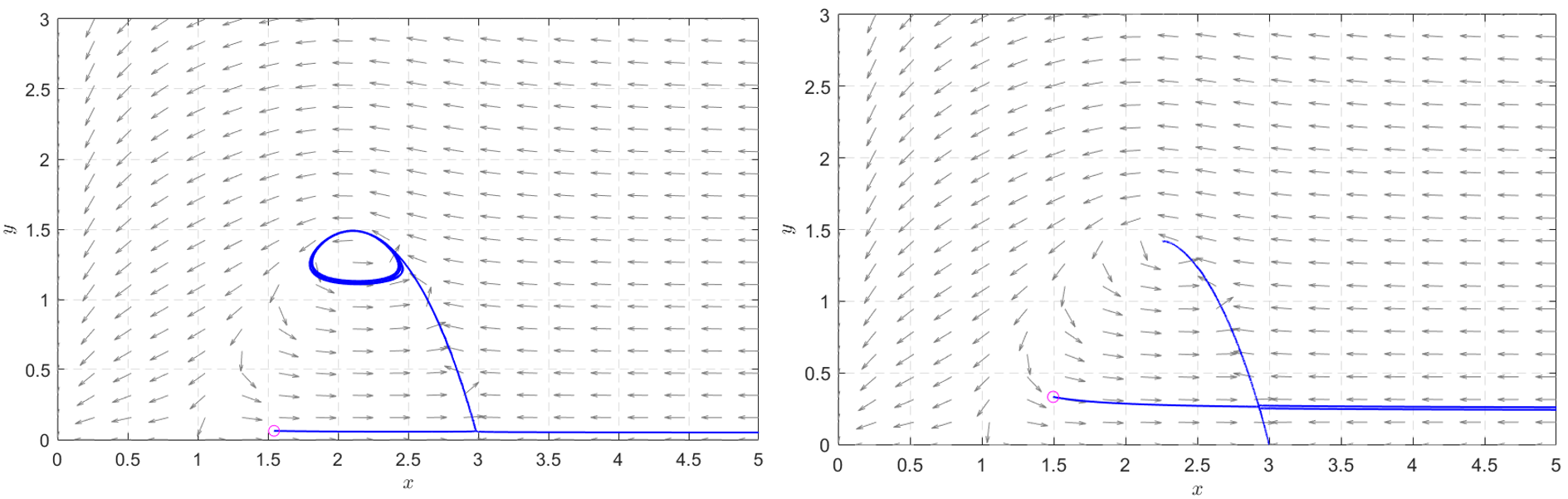}
    \caption{
    Transcritical bifurcation at the equilibrium point $E_1(k_1, 0)$ under the strong Allee effect. The curves represent the predator population ($P$) as a function of the prey population ($N$). Parameter values are set as follows: $r_1 = 1.5$, $k_1 = 3$, $k_0 = 1$, $A = 0.8$, $b = 0.5$, $h = 0.7$, and $s = 0.75$. The estimated critical value of $\lambda$ at the bifurcation point is $\lambda_{TB} =0.263$. The left and right panels correspond to $\lambda = 0.2$ and $\lambda = 0.3$, respectively.
  } \label{figure2}
  \end{center}
\end{figure}

Second, we analyze the saddle-node bifurcation of system~\eqref{equ02}. Let \( \lambda^* > 0 \) be a critical parameter value such that the equation
\begin{equation}\label{G(x)=0}
G(x) = 0,
\end{equation}
has a double positive root \( x^*_{\lambda^*} \in (\max\{0, k_0\}, k_1) \). The function \( G(x) \) is given by
\begin{align}
 G(x) = \frac{r_1}{s} \bigg[
&\frac{A}{k_1} (1 - sh)x^4
 - \left(A(1 - sh) + \frac{s}{k_1} + \frac{Ak_0}{k_1}(sh - 1)\right) x^3 \notag\\
 &+ \left(s + Ak_0(1 - sh) - \frac{sk_0}{k_1} \right)x^2
 - sk_0 x \bigg]
 - \lambda^*(1 - sh)x + sb. \label{gEquation}
\end{align}

Note that equation~\eqref{G(x)=0} is equivalent to
\[
g^{(1)}(x) = g^{(2)}(x),
\]
where \( g^{(1)} \) and \( g^{(2)} \) are defined in \eqref{equ21} and \eqref{equ22}, respectively. In other words, the point \( E^* = (x^*_{\lambda^*}, y^*_{\lambda^*}) \), with \( y^*_{\lambda^*} = g^{(1)}(x^*_{\lambda^*}) > 0 \), is a solution of the algebraic system~\eqref{equ08}, and represents an interior equilibrium of system~\eqref{equ02}.

Figure~\ref{figure22} illustrates the graphs of the nullclines \( f^{(1)}(x, y) = 0 \) and \( f^{(2)}(x, y) = 0 \), where the latter corresponds to \( y = g^{(2)}(x) \). In the left panel, the curves intersect at two points, while in the right panel, they do not intersect. This comparison demonstrates the existence of a critical parameter value \( \lambda^* \) at which the system undergoes a saddle-node bifurcation.

\begin{figure}[H]
  \begin{center}  
    \includegraphics[scale=0.38]{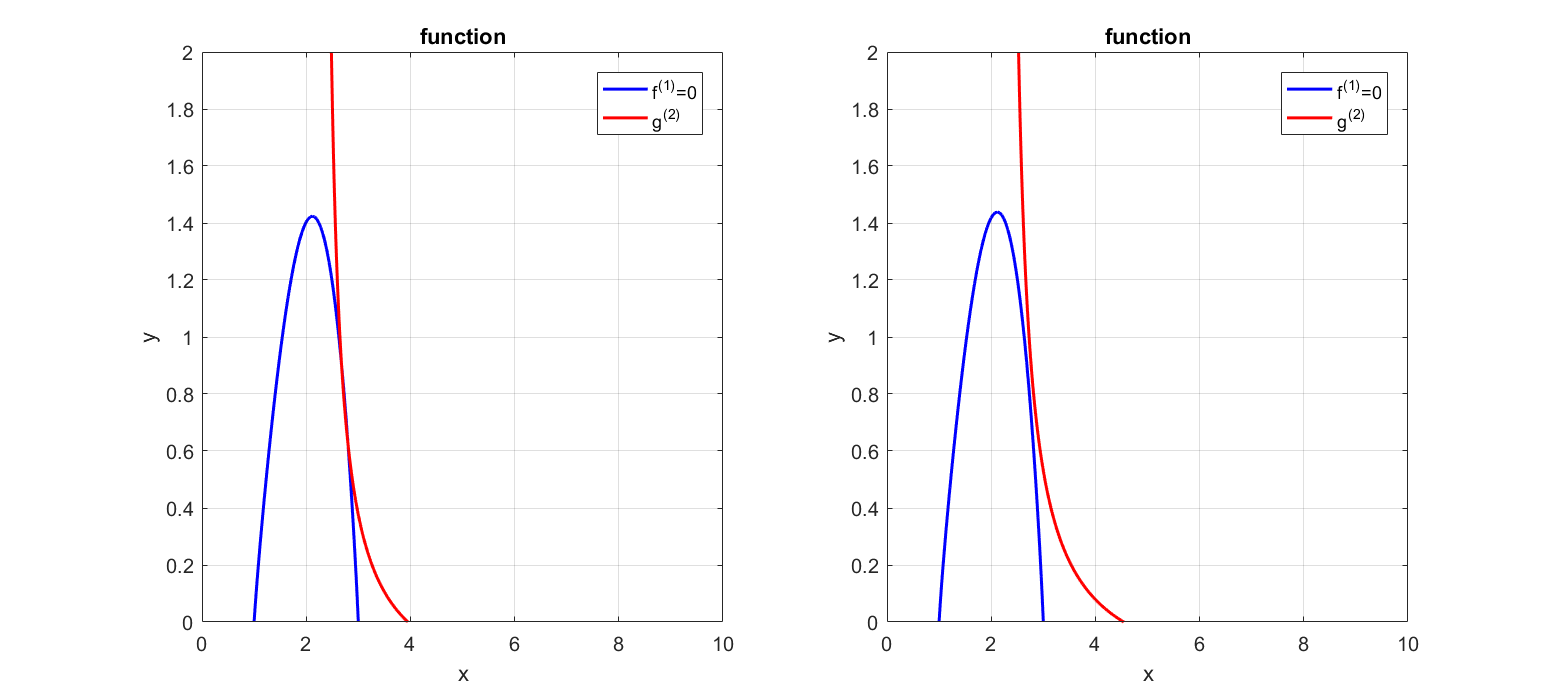}
    \caption{Nullclines of system~\eqref{equ08}: \( f^{(1)}(x, y) = 0 \) and \( f^{(2)}(x, y) = 0 \) (also expressed as \( y = g^{(2)}(x) \)). The left panel shows the curves for \( \lambda = 0.2 \), and the right panel for \( \lambda = 0.23 \). The remaining parameters are fixed at \( r_1 = 1.5 \), \( k_1 = 3 \), \( k_0 = 1 \), \( A = 0.8 \), \( b = 0.5 \), \( h = 0.7 \), and \( s = 0.8 \).
    }
  \label{figure22}
  \end{center}
\end{figure}

\begin{theorem}
Let \( 1 - sh > 0 \). Assume that there exists $\lambda^*>0$ such that equation~\eqref{G(x)=0} has a double positive root $ x^*_{\lambda^*} \in (\max\{0,k_0\},k_1)$, and with \( y^*_{\lambda^*} = g^{(1)}(x^*_{\lambda^*})\), the following conditions hold:
\begin{align}
&r_1(1-\frac{x^*_{\lambda^*}}{k_1})(x^*_{\lambda^*}-k_0) + r_1x^*_{\lambda^*}\left(-\frac{2x^*_{\lambda^*}}{k_1}+1+\frac{k_0}{k_1}\right) \ne 0,  \label{bifurCond1} \\
& \left(-\frac{\frac{\partial f_2}{\partial x}}{\frac{\partial f_1}{\partial x}}-1 \right)\left[\frac{\partial^2 f_2}{\partial x^2}v_5^2+ \frac{\partial^2 f_1}{\partial x \partial y}v_5v_6 + \frac{\partial^2 f_1}{\partial y^2}v_6^2  \right]-\frac{\frac{\partial f_2}{\partial x}}{\frac{\partial f_1}{\partial x}}r_1 \left(2+\frac{2k_0}{k_1}-\frac{6x}{k_1}\right) \Bigg |_{(x^*_{\lambda^*},y^*_{\lambda^*})} \ne 0, \label{bifurCond2}
\end{align}
where the first and second partial derivatives of $f_1, f_2$ are given from 
\eqref{f1x}--\eqref{f2xy} 
and $v_6=1$, 
    $$v_5 = \frac{\lambda x [b+hx(\lambda+Ay)]+2Abxy+{A^2h x^3 y^3}(\lambda+Ay)}{[b+y+hx(\lambda+Ay)]^2[r_1(1-\frac{x}{k_1})(x-k_0)+r_1x(1-\frac{k_0}{k_1}-\frac{2x}{k_1})]-y(b+y)(\lambda+Ay)}.$$ 
 Then, a saddle-node bifurcation of system~\eqref{equ02} occurs at the non-hyperbolic equilibrium point \( E^* = (x^*_{\lambda^*}, y^*_{\lambda^*}) \) for the critical parameter value \( \lambda = \lambda^* \). 
\end{theorem}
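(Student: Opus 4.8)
The plan is to apply the saddle-node criterion of Lemma~\ref{lem03}(ii) at $(X_0,\mu_0)=(E^*,\lambda^*)$. Three things must be checked: hypothesis~(b), that $J(E^*)$ has a simple zero eigenvalue with the remaining eigenvalue off the imaginary axis; the transversality condition~\eqref{saddle_cond1}, $\mathbf W^T\mathbf f_\lambda\neq0$; and the non-degeneracy condition~\eqref{saddle_cond2}, $\mathbf W^T[D^2\mathbf f(\mathbf V,\mathbf V)]\neq0$. The hypotheses \eqref{bifurCond1} and \eqref{bifurCond2} turn out to be exactly the last two conditions once the eigenvectors are computed, so most of the work is organizing the reduction.

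First I would turn the double-root hypothesis into the statement $\det J(E^*)=0$. Clearing the factor $x\,[\,s+A(sh-1)x\,]$ in \eqref{G(x)=0} writes $G(x)=\Psi(x)\,\Phi(x)$, where $\Phi(x):=f^{(1)}\!\big(x,g^{(2)}(x)\big)$ and $\Psi(x)=\mathrm{const}\cdot x\,[\,s+A(sh-1)x\,]$ is smooth and nonzero at $x^*$ (since $x^*>0$ and $g^{(2)}(x^*)=y^*$ is finite and positive). Hence $x^*$ being a double root of $G$ is equivalent to $\Phi(x^*)=\Phi'(x^*)=0$. The equality $\Phi(x^*)=0$ means $g^{(1)}(x^*)=g^{(2)}(x^*)$, so $f^{(1)}(E^*)=f^{(2)}(E^*)=0$ and $E^*$ is an interior equilibrium with $y^*=g^{(1)}(x^*)>0$. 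Writing $\Phi(x)=f^{(1)}(x,y_2(x))$ with $y_2=g^{(2)}$ solving $f^{(2)}=0$, differentiating, and substituting $\partial_x f^{(1)}=-\partial_y f^{(1)}\,(dy_1/dx)$ from \eqref{E09}, the equality $\Phi'(x^*)=0$ collapses to $\partial_y f^{(1)}\,(dy_2/dx-dy_1/dx)=0$ at $E^*$; since $\partial_y f^{(1)}<0$ by \eqref{f1_derivative}, this gives the tangency $dy_1/dx=dy_2/dx$, and Lemma~\ref{lem02} yields $\det J(E^*)=0$.

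Next I would record simplicity and build the eigenvectors. For a $2\times2$ matrix with vanishing determinant the spectrum is $\{0,\operatorname{Tr}J(E^*)\}$, so the zero eigenvalue is simple and the complementary eigenvalue is real and hyperbolic precisely when $\operatorname{Tr}J(E^*)\neq0$, which I would verify holds in the regime at hand (the computation also needs $\partial_x f_1(E^*)\neq0$ so that the eigenvector formulas are well defined). Solving $J(E^*)\mathbf V=0$ gives the right null vector $\mathbf V=(v_5,1)^T$ with $v_5=-\partial_y f_1/\partial_x f_1$ — matching the stated $v_5$, whose numerator is $-[b+y+hx(\lambda+Ay)]^2\partial_y f_1$ and whose denominator is $[b+y+hx(\lambda+Ay)]^2\partial_x f_1$ — and the left null vector $\mathbf W=\big(-\partial_x f_2/\partial_x f_1,\,1\big)^T$, rescaled so that $\mathbf W^T\mathbf V=1$. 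For \eqref{saddle_cond1} a direct differentiation gives $\mathbf f_\lambda=(-f_{2\lambda},f_{2\lambda})^T$ with $f_{2\lambda}=x^*y^*(b+y^*)/[\,b+y^*+hx^*(\lambda^*+Ay^*)\,]^2>0$, so $\mathbf W^T\mathbf f_\lambda=f_{2\lambda}\,(\partial_x f_1+\partial_x f_2)/\partial_x f_1$, which is nonzero exactly by \eqref{bifurCond1}. For \eqref{saddle_cond2} I would expand the Hessian contraction $\mathbf W^T[D^2\mathbf f(E^*)(\mathbf V,\mathbf V)]$ using \eqref{f1xx}--\eqref{f2xy} together with the cancellation identities $\partial^2_{yy}f_2=-\partial^2_{yy}f_1$, $\partial^2_{xy}f_2=-\partial^2_{xy}f_1$, and $\partial^2_{xx}f_2=r_1(2+2k_0/k_1-6x/k_1)-\partial^2_{xx}f_1$; after collecting terms this reduces to the left-hand side of \eqref{bifurCond2}. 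With hypothesis~(b) in place and \eqref{saddle_cond1}--\eqref{saddle_cond2} verified, Lemma~\ref{lem03}(ii) gives a saddle-node bifurcation at $E^*$ for $\lambda=\lambda^*$.

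I expect the main obstacle to be the second-order condition \eqref{saddle_cond2}: reconciling the raw contraction $\mathbf W^T D^2\mathbf f(\mathbf V,\mathbf V)$, built from the lengthy expressions for $\partial^2_{xx}f_i,\partial^2_{xy}f_i,\partial^2_{yy}f_i$, with the compact form of \eqref{bifurCond2} is the most delicate bookkeeping and is precisely where the cancellation identities among the second derivatives must be used carefully. A secondary point is confirming the simplicity of the zero eigenvalue, i.e. $\operatorname{Tr}J(E^*)\neq0$, and the nonvanishing of $\partial_x f_1(E^*)$; both are needed to make the eigenvector representations legitimate and are implicit in the stated hypotheses.
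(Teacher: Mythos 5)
Your proposal is correct and follows essentially the same route as the paper: translate the double root of $G$ into tangency of the nullclines and hence $\det J(E^*)=0$, take the same right/left null vectors $\mathbf V=(v_5,1)^T$, $\mathbf W=(-\tfrac{\partial_x f_2}{\partial_x f_1},1)^T$, and identify \eqref{bifurCond1} and \eqref{bifurCond2} with the two conditions of Lemma~\ref{lem03}(ii). Your explicit factorization $G=\Psi\Phi$ and your remark that simplicity of the zero eigenvalue actually requires $\operatorname{Tr}J(E^*)\neq0$ (which the paper infers only from $J(E^*)\neq0$) are, if anything, slightly more careful than the published argument.
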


\begin{proof}
Since \( 1 - sh > 0 \), Theorem~\ref{two_equilibria} implies that the function $G$ in 
\eqref{gEquation} has at most two  distinct positive roots. 
Furthermore, since $G$ has a double positive root $ x^*_{\lambda^*} \in (\max\{0,k_0\},k_1)$, the two curves 
\( f^{(1)}(x, y) = 0 \) and \( f^{(2)}(x, y) = 0 \) touch each other at $E^*$. Hence, at $E^*$, the total differential is
$$df^{(1)}(x, y)=df^{(2)}(x, y)=0.$$
Thus,
\begin{equation}  \label{estar}
         \frac{\partial f^{(1)}}{\partial x}  \frac{\partial f^{(2)}}{\partial y} = \frac{\partial f^{(1)}}{\partial y}  \frac{\partial f^{(2)}}{\partial x} \quad \text{at } E^*.
    \end{equation}

From \eqref{eqj(e)}, 
 the Jacobian determinant at $E^*$ is
    \[
        \det(J(E^*)) = \frac{\partial f_1}{\partial x}  \frac{\partial f_2}{\partial y} -  \frac{\partial f_1}{\partial y} \frac{\partial f_2}{\partial x}.
    \]
Since $f_1=xf^{(1)}$ and $ f_2=xf^{(2)}$ as defined by \eqref{eq_f1f1},   and $f^{(1)}(E^*)=f^{(2)}(E^*)=0$, it follows from \eqref{estar} that $\det(J(E^*)) =0.$
   Since $J(E^*)$ is nonzero (from \eqref{f2_derivative} and \eqref{f1_derivative}), it has a simple zero eigenvalue.  

Computing the eigenvectors of the zero eigenvalue for $J$ and its transpose $J^T$ at $E^*$, we obtain  
    \[
        {\bf V} = \begin{pmatrix} -\frac{\frac{\partial f_1}{\partial y}}{\frac{\partial f_1}{\partial x}} \\ 1 \end{pmatrix} = \begin{pmatrix} v_5 \\ v_6 \end{pmatrix}, \quad
        {\bf W} = \begin{pmatrix} -\frac{\frac{\partial f_2}{\partial x}}{\frac{\partial f_1}{\partial x}} \\ 1 \end{pmatrix}.
    \]

Since 
    \[
        {\bf f_{\lambda}} =
        \begin{pmatrix}
            f_{1\lambda} \\ f_{2\lambda} 
        \end{pmatrix}
        =
        \begin{pmatrix}
            -\frac{xy[b+y+hx(\lambda+Ay)]-hx^2y(\lambda+Ay)}{(b+y+hx(\lambda + Ay))^2} \\  
            \frac{xy[b+y+hx(\lambda+Ay)]-hx^2y(\lambda+Ay)}{(b+y+hx(\lambda + Ay))^2}
        \end{pmatrix},
    \]
we have
    \begin{align*}
        {\bf W}^T f_{\lambda}(E^*) &= \begin{pmatrix} -\frac{\frac{\partial f_2}{\partial x}}{\frac{\partial f_1}{\partial x}} \\ 1 \end{pmatrix}^T
        \begin{pmatrix}
            -\frac{x^*y^*(b+y^*+hx^*(\lambda+Ay^*))-h(x^*)^2y^*(\lambda+Ay^*)}{(b+y^*+hx^*(\lambda + Ay^*))^2} \\  
            \frac{x^*y^*(b+y^*+hx^*(\lambda+Ay^*))-h(x^*)^2y^*(\lambda+Ay^*)}{(b+y^*+hx^*(\lambda + Ay^*))^2} 
        \end{pmatrix} \\
        &= \frac{x^*y^*(b+y^*)}{(b+y^*+hx^*(\lambda + Ay^*))^2} \left(\frac{\frac{\partial f_2}{\partial x}}{\frac{\partial f_1}{\partial x}} + 1\right) \\
        &= \frac{x^*y^*(b+y^*) \left[r_1(1-\frac{x^*_{\lambda^*}}{k_1})(x^*_{\lambda^*}-k_0) + r_1x^*_{\lambda^*}\left(-\frac{2x^*_{\lambda^*}}{k_1}+1+\frac{k_0}{k_1}\right)\right]}{(b+y^*+hx^*(\lambda + Ay^*))^2 \frac{\partial f_1}{\partial x}} \neq 0
    \end{align*}
    due to \eqref{bifurCond1}. 
    Thus, the condition \eqref{saddle_cond1} of Lemma \ref{lem03} is satisfied.   

Next, we verify condition \eqref{saddle_cond2} of Lemma \ref{lem03} by calculating ${\bf W}^T[D^2f(E^*)({\bf V},{\bf V})]$ as follows:
    \begin{align*}
        {\bf W}^T[D^2{\bf f}(E^*)({\bf V}, {\bf V})] 
        &= \begin{pmatrix} -\frac{\frac{\partial f_2}{\partial x}}{\frac{\partial f_1}{\partial x}} \\ 1 \end{pmatrix}^T 
        \begin{pmatrix}
              \frac{\partial^2 f_1}{\partial x^2}v_5^2 + \frac{\partial^2 f_1}{\partial x \partial y}v_5v_6 + \frac{\partial^2 f_1}{\partial y^2}v_6^2 \\
              \frac{\partial^2 f_2}{\partial x^2}v_5^2 + \frac{\partial^2 f_2}{\partial x \partial y}v_5v_6 + \frac{\partial^2 f_2}{\partial y^2}v_6^2
        \end{pmatrix}.
    \end{align*}
Hence, we obtain that  
\begin{align*}
    &{\bf W}^T[D^2f(E^*)({\bf V},{\bf V})] \\
    &=\left(-\frac{\frac{\partial f_2}{\partial x}}{\frac{\partial f_1}{\partial x}}-1 \right)\left[\frac{\partial^2 f_2}{\partial x^2}v_5^2+ \frac{\partial^2 f_1}{\partial x \partial y}v_5v_6 + \frac{\partial^2 f_1}{\partial y^2}v_6^2  \right]-\frac{\frac{\partial f_2}{\partial x}}{\frac{\partial f_1}{\partial x}}r_1 \left(2+\frac{2k_0}{k_1}-\frac{6x}{k_1}\right) \Bigg |_{(x^*_{\lambda^*},y^*_{\lambda^*})}\\
    &\ne 0
\end{align*}
due to \eqref{bifurCond2}.
This implies the condition \eqref{saddle_cond2}. By Lemma \ref{lem03}, a saddle-node bifurcation occurs at $E^*$. The proof is complete.
\end{proof}

\begin{remark}
    A necessary condition for the assumption that the equation \eqref{G(x)=0} has a double positive root 
    $ x^*_{\lambda^*} \in  (\max\{0,k_0\},k_1)$
    is that  $\lambda^*-Ab <0$.
\end{remark}

Figure \ref{figure3} illustrates an example of saddle-node bifurcation  in system \eqref{equ02} with parameter values $r_1 = 1.5$, $k_1 = 3$, $k_0 = 1$, $A = 0.8$, $b = 0.5$, $h = 0.7,$ and $s = 0.8$.   The left sub-figure shows the absence of interior equilibria for $\lambda = 0.23,$ while the right sub-figure depicts the emergence of two interior equilibria for
$\lambda = 0.2$.
\begin{figure}[H]
  \begin{center}  
    \includegraphics[scale=0.48]{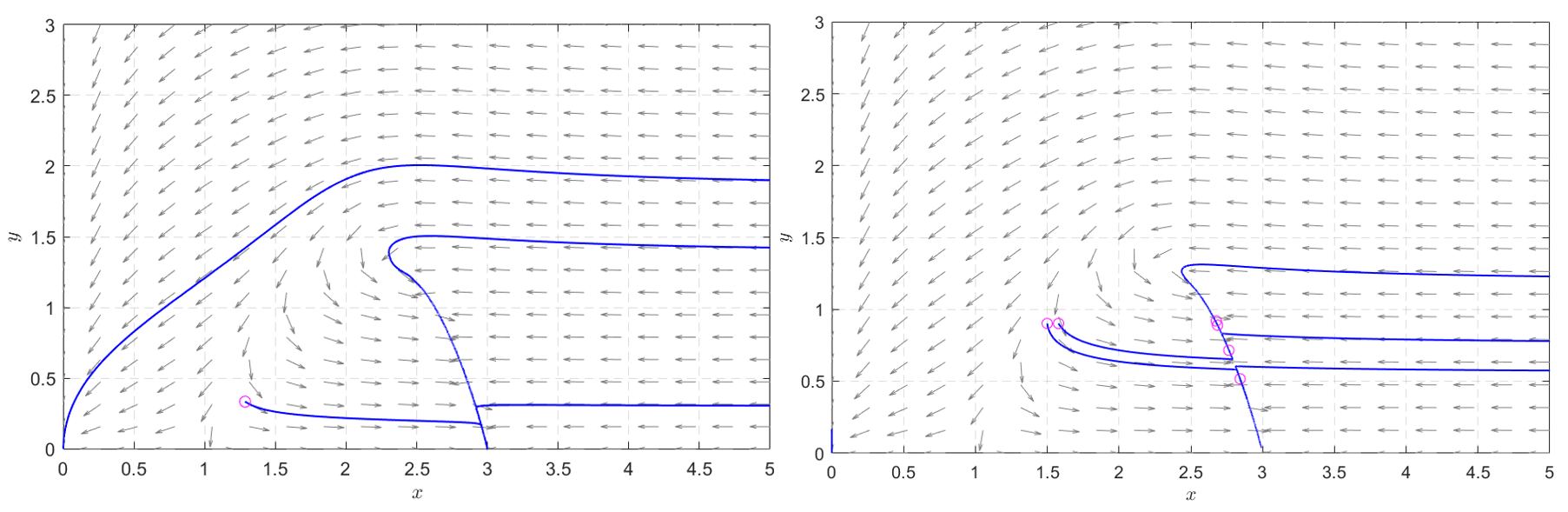}
    \caption{Saddle-node bifurcation at an interior equilibrium under the strong Allee effect. 
    The curves illustrate the predator population ($y$) versus the prey population ($x$). 
    The left panel corresponds to $\lambda = 0.23$, showing no interior equilibria, 
    while the right panel corresponds to $\lambda = 0.2$, demonstrating the emergence of two interior equilibria.
 }  \label{figure3}
  \end{center}
\end{figure}

\subsection{Hopf bifurcation}
In this subsection, we investigate conditions under which a Hopf bifurcation occurs in system \eqref{equ02}. A Hopf bifurcation is a local bifurcation in which a pair of complex conjugate eigenvalues of the Jacobian matrix at an equilibrium point crosses the imaginary axis as a system parameter varies. This transition leads to a qualitative change in the local dynamics: the equilibrium may lose stability, and a family of small-amplitude periodic orbits (limit cycles) may emerge or disappear.

More formally, consider the parameter-dependent system
\[
\dot{z} = f(z, \kappa), \quad z \in \mathbb{R}^n, \ \kappa \in \mathbb{R},
\]
where $f \in C^r(\mathbb{R}^n \times \mathbb{R}, \mathbb{R}^n)$ with $r \geq 2$. Suppose that:
\begin{itemize}
    \item[(H1)] The system has a smooth branch of equilibria \( z_0(\kappa) \), such that \( f(z_0(\kappa), \kappa) = 0 \) for \( \kappa \) near \( \kappa_0 \).
    \item[(H2)] The Jacobian matrix \( Df(z_0(\kappa_0), \kappa_0) \) has a simple pair of purely imaginary eigenvalues \( \pm i \omega_0 \) with \( \omega_0 > 0 \), and all other eigenvalues have nonzero real parts.
    \item[(H3)] The real part of the eigenvalues crosses the imaginary axis transversally:
    \[
    \left. \frac{d}{d\kappa} \operatorname{Re}(\lambda(\kappa)) \right|_{\kappa = \kappa_0} \neq 0.
    \]
\end{itemize}
Then, the system undergoes a Hopf bifurcation at \( (z_0(\kappa_0), \kappa_0) \); that is, a family of periodic solutions bifurcates from the equilibrium. The direction and stability of these periodic solutions are determined by the sign of the first Lyapunov coefficient $l_1$. A supercritical Hopf bifurcation ($l_1 < 0$) yields stable limit cycles, while a subcritical one ($l_1 > 0$) yields unstable limit cycles (see \cite{kuznetsov1998elements}).

We now state the main result for system \eqref{equ02}.

\begin{theorem}\label{hopfBifurcation}
Consider system \eqref{equ02}.
\begin{itemize}
    \item[(i)] Suppose \( \lambda - Ab < 0 \). Asume that the system has two distinct interior equilibria, denoted as \( E_1^* \) and \( E_2^* \), in \( \mathbb{R}_+^2 \), 
            where the \( x \)-coordinate of \( E_1^* \) is smaller than that of \( E_2^* \),
    or the system has only one interior equilibrium $E_1^*$.
    Assume that the Jacobian matrix \( J(E_1^*) \) satisfies \( \operatorname{Tr}(J(E_1^*)) = 0 \) when \( s=s^*\), then the system undergoes a Hopf bifurcation at \( (E_1^*,s^*) \).
    
    \item[(ii)] Suppose \( \lambda - Ab > 0 \) and system \eqref{equ02} admits two interior equilibria, \( E_1^* \) and \( E_2^* \), with the $x$-coordinate of \( E_1^* \) smaller than that of \( E_2^* \). If \( \operatorname{Tr}(J(E_1^*)) = 0 \) and \( \operatorname{Tr}(J(E_2^*)) = 0 \) when \( s=s^*\), then the system undergoes a Hopf bifurcation at \( E_2^* \) under a strong Allee effect.
    
    \item[(iii)] Suppose \( \lambda - Ab > 0 \) and system \eqref{equ02} admits a unique interior equilibrium \( E_1^* \), for which \( \operatorname{Tr}(J(E_1^*)) = 0 \) at when \( s=s^*\). Then, the system undergoes a Hopf bifurcation at \( E_1^* \).
\end{itemize}
\end{theorem}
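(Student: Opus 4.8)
The plan is to verify the three hypotheses (H1)--(H3) of the Hopf criterion, taking $s$ as the bifurcation parameter and handling the three cases in parallel, since they differ only in \emph{which} interior equilibrium plays the role of the bifurcating focus. Because system~\eqref{equ02} is planar, the characteristic polynomial of $J(E^*)$ is $\mu^2-\operatorname{Tr}\bigl(J(E^*)\bigr)\mu+\det\bigl(J(E^*)\bigr)$, so once the hypothesis $\operatorname{Tr}(J(E^*))=0$ holds at $s=s^*$ the eigenvalues are $\mu_\pm=\pm i\sqrt{\det(J(E^*))}$. Two things therefore remain: showing $\det(J(E^*))>0$ at the bifurcating equilibrium (so that $\mu_\pm$ is a genuine purely imaginary pair), and checking transversality, which for a planar system reduces to a condition on $\tfrac{d}{ds}\operatorname{Tr}(J(E^*(s)))$. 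Condition (H1) is the easiest: at the bifurcation point $\det(J(E^*))\neq 0$, so $E^*$ is an isolated non-degenerate zero of the equilibrium map, and the implicit function theorem produces a smooth branch $E^*(s)=(x^*(s),y^*(s))$ with $E^*(s^*)=E^*$.

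For (H2) the crux is the sign of $\det(J(E^*))$, which I would read off from \reft{thm01}: there $\operatorname{sign}\det(J(E^*))$ is controlled by $\operatorname{sign}\!\bigl(\tfrac{dy_2}{dx}-\tfrac{dy_1}{dx}\bigr)$ together with the sign of $\lambda-Ab$. The key geometric observation is that, when two interior equilibria coexist, they are transversal crossings of the prey nullcline $y=y_1(x)$ and the predator nullcline $y=y_2(x)=g^{(2)}(x)$; between the two crossings one nullcline lies above the other, so $\tfrac{dy_2}{dx}-\tfrac{dy_1}{dx}$ takes opposite signs at the two equilibria. Since the prefactors $x,y,\partial f^{(1)}/\partial y,\partial f^{(2)}/\partial y$ in \refl{lem02} keep a fixed sign on the interior region once $\operatorname{sign}(\lambda-Ab)$ is prescribed, this forces $\det(J(E^*))$ to be positive at one equilibrium and negative at the other, i.e. the pair is a saddle--node pair. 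Matching this against \reft{thm01} identifies the focus ($\det>0$) as $E_1^*$ when $\lambda<Ab$ (case (i)) and as the larger-$x$ equilibrium $E_2^*$ when $\lambda>Ab$ (case (ii))—note that at the saddle $E_1^*$ in case (ii), $\operatorname{Tr}=0$ does \emph{not} yield a Hopf bifurcation because $\det<0$ keeps the eigenvalues real. In the single-equilibrium situations (the alternative in (i) and case (iii)) I would instead obtain the sign of $\tfrac{dy_2}{dx}-\tfrac{dy_1}{dx}$ at the unique crossing from the boundary values of the nullclines, computed exactly as in \reft{globaldynamics} and \reft{thm3.6}. With $\det(J(E^*))>0$ established I set $\omega_0=\sqrt{\det(J(E^*))}>0$; then $\pm i\omega_0$ are the two simple roots of $\mu^2+\omega_0^2=0$, and no further eigenvalues exist in the plane, so (H2) holds.

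Finally, for (H3) I would use that for a planar system $\operatorname{Re}\mu_\pm(s)=\tfrac12\operatorname{Tr}\bigl(J(E^*(s))\bigr)$, so transversality is equivalent to
\[
\frac{d}{ds}\operatorname{Tr}\bigl(J(E^*(s))\bigr)\Big|_{s=s^*}\neq 0 .
\]
Writing $\operatorname{Tr}(J)=x\tfrac{\partial f^{(1)}}{\partial x}+y\tfrac{\partial f^{(2)}}{\partial y}$ from \refl{lem02}, I would note that neither $\tfrac{\partial f^{(1)}}{\partial x}$ nor $\tfrac{\partial f^{(2)}}{\partial y}$ depends on $s$ explicitly, so the entire $s$-dependence enters through the equilibrium motion $(x^{*\prime}(s),y^{*\prime}(s))$; the latter is obtained by differentiating the equilibrium equations $f^{(1)}=f^{(2)}=0$ and solving the resulting linear system against $J(E^*)$, which is invertible since $\det(J(E^*))\neq 0$. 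Substituting gives an explicit expression for the trace derivative whose non-vanishing must then be verified. I expect this transversality step to be the main obstacle, since the formula for $(x^{*\prime},y^{*\prime})$ makes the derivative algebraically heavy; the cleanest route is to argue that $\operatorname{Tr}(J(E^*(s)))$ changes sign as $s$ passes $s^*$—so the focus switches from unstable to stable (or vice versa)—which forces the derivative to be nonzero, and to make this sign change rigorous from the monotone dependence of the trace along the branch near $s^*$. Once (H1)--(H3) are in place, the Hopf criterion applies and yields the asserted bifurcation in each of the three cases.
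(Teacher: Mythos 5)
Your overall strategy --- purely imaginary eigenvalues obtained from $\operatorname{Tr}(J(E^*))=0$ together with $\det(J(E^*))>0$, followed by a transversality check in the parameter $s$ --- is exactly the paper's. Your treatment of (H2) is in fact more explicit than the paper's: the paper simply invokes the sign analysis of $\det(J(E^*))$ from the proof of \reft{thm01}, whereas you supply the geometric reason (opposite signs of $\tfrac{dy_2}{dx}-\tfrac{dy_1}{dx}$ at consecutive transversal nullcline crossings, combined with \refl{lem02}) that singles out which of the two coexisting equilibria is the antisaddle. That part is sound and consistent with the case split in the statement.

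The genuine gap is (H3). You correctly reduce transversality to $\tfrac{d}{ds}\operatorname{Tr}\bigl(J(E^*(s))\bigr)\big|_{s=s^*}\neq 0$, but you then stop: you say the resulting expression's ``non-vanishing must then be verified,'' and your fallback --- that the trace changes sign across $s^*$ --- is never established and is essentially a restatement of what has to be proved. The paper closes this step with a one-line computation that your setup obscures, because you passed to the representation $\operatorname{Tr}(J)=x\,\tfrac{\partial f^{(1)}}{\partial x}+y\,\tfrac{\partial f^{(2)}}{\partial y}$, from which the explicit $s$-dependence has been cancelled. Working instead with the raw entries \eqref{eq_f1f1}, the parameter $s$ enters the vector field only through the term $-sy$, so $\tfrac{\partial f_1}{\partial x}$ is independent of $s$ while $\tfrac{\partial f_2}{\partial y}$ in \eqref{f2y} is affine in $s$ with coefficient $-1$; hence $\tfrac{\partial}{\partial s}\operatorname{Tr}(J)=-1\neq 0$ at the frozen equilibrium, which is the paper's entire verification of transversality. (Strictly speaking, the paper's computation neglects the contribution from the motion $(x^{*\prime}(s),y^{*\prime}(s))$ of the equilibrium branch --- precisely the terms you flagged as algebraically heavy --- so your instinct about where the subtlety lies is not misplaced; but as written your proposal completes the step by neither route, and this is the missing piece.)
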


\begin{proof}
    We prove the theorem for the case \( \lambda - Ab < 0 \). The proofs for the remaining cases follow similar arguments and are therefore omitted.

    Under the condition \( \lambda - Ab < 0 \), we showed in the proof of Theorem \ref{thm01} that \( \det(J(E^*)) > 0 \). Combining this and the condition \( \operatorname{Tr}(J(E_1^*)) = 0 \), we obtain that 
   the eigenvalues of $J(E_1^*)$ are purely imaginary.


    Furthermore, we compute the derivative of the trace of the Jacobian with respect to \( s \). From \eqref{eqj(e)}, we have  
    \begin{align*}
        \frac{\partial \operatorname{Tr}(J(E^*))}{\partial s} 
        &= \frac{\partial}{\partial s} \left( \frac{\partial f_1}{\partial x} \right) 
        + \frac{\partial}{\partial s} \left( \frac{\partial f_2}{\partial y} \right) = -1 \neq 0.
    \end{align*}
    Since this derivative is nonzero, the transversality condition for a Hopf bifurcation is satisfied. Therefore, a Hopf bifurcation occurs at \( E^* \) as the parameter \( s \) varies.
\end{proof}

In Figure \ref{figure4}, we illustrate a Hopf bifurcation in system \eqref{equ02} under a strong Allee effect for the following parameter values: \( r_1 = 1.5 \), \( k_1 = 3 \), \( k_0 = 1 \), \( A = 0.8 \), \( b = 0.5 \), \( h = 0.7 \), and \( \lambda = 0.3 \). The left sub-figure shows the presence of an interior equilibrium for \( s = 0.75 \), while the right sub-figure depicts the emergence of a limit cycle for \( s = 0.76 \).
\begin{figure}[H]
  \begin{center}  
    \includegraphics[scale=0.48]{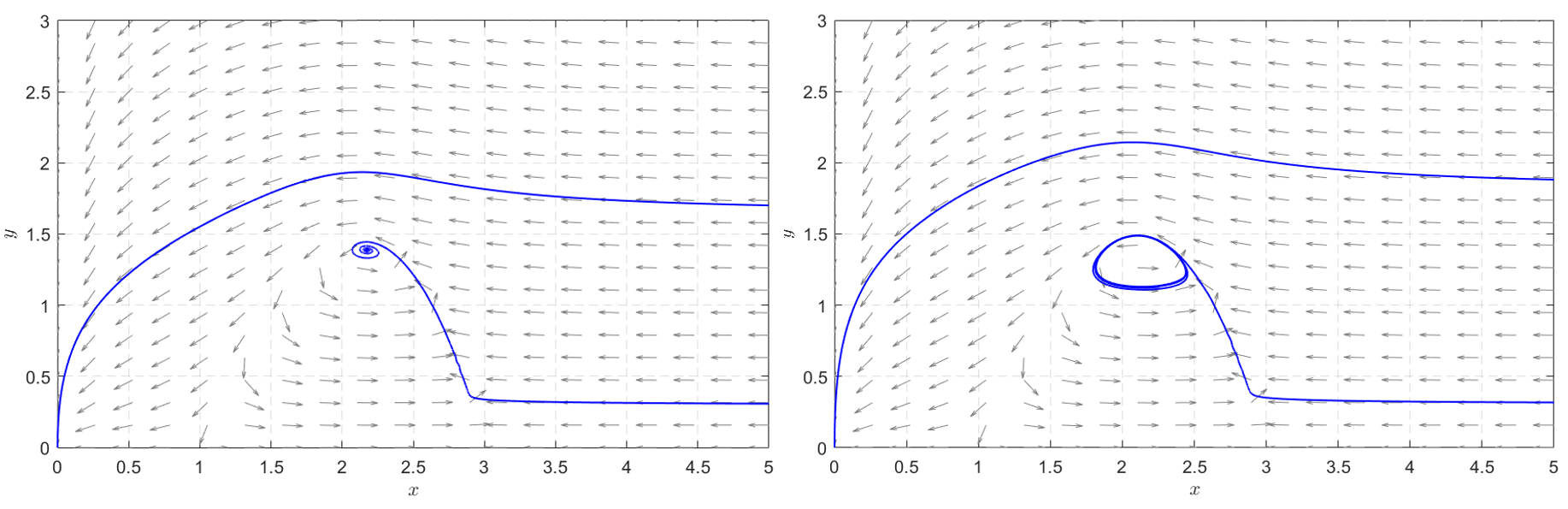}
    \caption{Hopf bifurcation at an interior equilibrium under the strong Allee effect. The curves illustrate the predator population (\( y \)) versus the prey population (\( x \)). The left panel shows stable equilibrium points for \( s = 0.75 \), while the right panel depicts a stable limit cycle for \( s = 0.76 \).}  \label{figure4}
  \end{center}
\end{figure}
Similarly, Figure \ref{figure5} illustrates a Hopf bifurcation in system \eqref{equ02} under a weak Allee effect. Here, the parameter values are \( r_1 = 0.3 \), \( k_1 = 4 \), \( k_0 = 1 \), \( A = 0.1 \), \( b = 3.5 \), \( h = 0.9 \), and \( \lambda = 0.1 \), with \( s = 0.1 \) and \( s = 0.15 \), respectively.
\begin{figure}[H]
  \begin{center}  
    \includegraphics[scale=0.52]{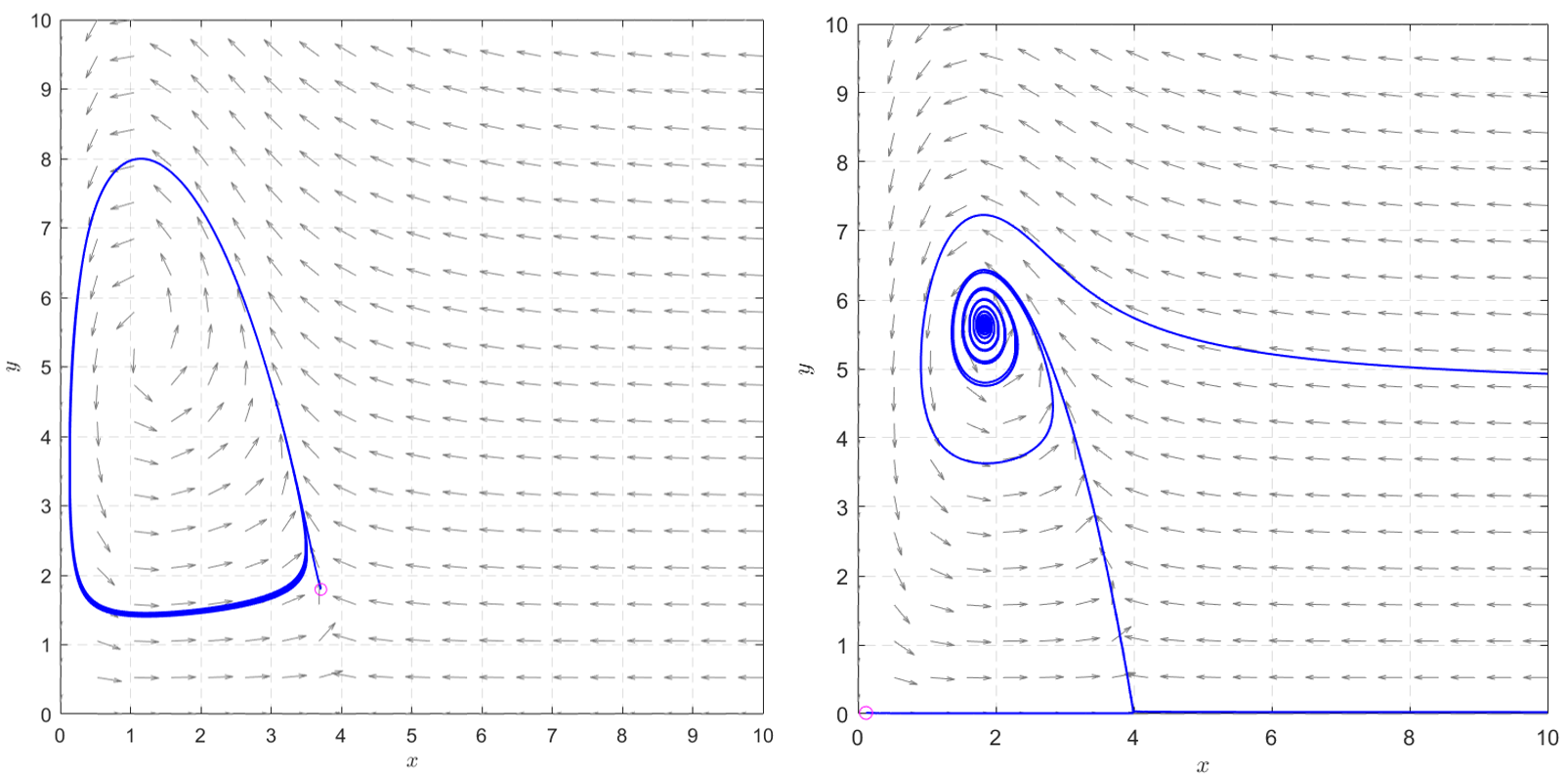}
    \caption{Hopf bifurcation at an interior equilibrium under the weak Allee effect. The curves illustrate the predator population (\( y \)) versus the prey (\( x \)) population. The left panel shows stable equilibrium points for \( s = 0.1 \), while the right panel depicts a stable limit cycle for \( s = 0.15 \). }  \label{figure5}
  \end{center}
\end{figure}

\subsection{Heteroclinic orbits and bifurcation}
A heteroclinic orbit (or heteroclinic connection)  is a trajectory in phase space that approaches two distinct equilibrium points as time tends to both positive and negative infinity. A heteroclinic bifurcation occurs when there is 
a qualitative change in a heteroclinic orbit as a system parameter varies, often involving the creation or destruction of such connections.

\begin{theorem} \label{Heteroclinic_orbit}  
Consider the case where the system~\eqref{equ02} exhibits a strong Allee effect.  
If \( 1 - sh > 0 \) and the parameter \( \lambda \) lies in the interval
\[
\lambda \in \left( \frac{sb}{k_1(1-sh)}, \frac{sb}{k_0(1-sh)} \right),
\]
then the system possesses a stable manifold \( \Gamma_1 \) associated with the saddle point \( E_2 = (k_0, 0) \), and an unstable manifold \( \Gamma_2 \) associated with the saddle point \( E_1 = (k_1, 0) \).  
Moreover, there exists a critical value \( \tilde{\lambda} \in \left( \frac{sb}{k_1(1-sh)}, \frac{sb}{k_0(1-sh)} \right) \) such that the manifolds \( \Gamma_1 \) and \( \Gamma_2 \) coincide, forming a heteroclinic orbit.  
In other words, the system~\eqref{equ02} undergoes a heteroclinic bifurcation at \( \lambda = \tilde{\lambda} \).
\end{theorem}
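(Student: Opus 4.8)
The plan is to show that on the open interval $I=\bigl(\tfrac{sb}{k_1(1-sh)},\,\tfrac{sb}{k_0(1-sh)}\bigr)$ both $E_1$ and $E_2$ are hyperbolic saddles whose relevant one-dimensional invariant manifolds vary continuously with $\lambda$, and then to detect a value of $\lambda$ at which they coincide via an intermediate-value argument applied to a scalar separation function. First I would record the saddle structure. Since $1-sh>0$, the two defining inequalities of $I$ are equivalent to $\tfrac{\lambda k_1}{b+hk_1\lambda}-s>0$ and $\tfrac{\lambda k_0}{b+hk_0\lambda}-s<0$; by Theorem~\ref{thm03} this means precisely that, for every $\lambda\in I$, the point $E_1=(k_1,0)$ is a hyperbolic saddle and $E_2=(k_0,0)$ is a hyperbolic saddle. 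Using the triangular Jacobians from the proof of Theorem~\ref{thm03}, the eigenvector of $E_1$ for the positive eigenvalue $\zeta_2$ points into $\{x<k_1,\ y>0\}$, so the branch $\Gamma_2=W^{u}(E_1)$ enters the interior, while the eigenvector of $E_2$ for the negative eigenvalue $\zeta_4$ points into $\{x>k_0,\ y>0\}$, so the branch $\Gamma_1=W^{s}(E_2)$ approaches $E_2$ from the interior. The stable/unstable manifold theorem then yields the existence of $\Gamma_1$ and $\Gamma_2$, which is the first assertion. I would also note that the endpoints of $I$ are exactly the transcritical values $\lambda_1=\tfrac{sb}{k_1(1-sh)}$ and $\lambda_0=\tfrac{sb}{k_0(1-sh)}$ produced in Theorem~\ref{thm4.1}(iii) and (ii).

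Next I would set up the comparison. By Theorem~\ref{lem08} the compact set $\mathbf{B}^{\delta}$ is positively invariant and absorbing, so all relevant manifold branches remain in a fixed compact region of $\mathbb{R}^2_+$; by smooth dependence of hyperbolic invariant manifolds on parameters, any prescribed finite arc of $\Gamma_2(\lambda)$ leaving $E_1$ and any finite arc of $\Gamma_1(\lambda)$ approaching $E_2$ vary continuously (indeed $C^1$) with $\lambda$. I would fix a short transversal segment $\Sigma$ crossing $\Gamma_1$ near $E_2$ and extend $\Gamma_1$ backward in time as a continuous family of arcs; taking the forward arc of $\Gamma_2$ long enough that it also meets $\Sigma$, let $y_u(\lambda)$ and $y_s(\lambda)$ denote the coordinates along $\Sigma$ at which $\Gamma_2$ and $\Gamma_1$ first meet it, and define the separation function
\[
\Delta(\lambda)=y_u(\lambda)-y_s(\lambda),\qquad \lambda\in I .
\]
Then $\Delta$ is continuous, and a zero $\Delta(\tilde\lambda)=0$ means $\Gamma_2$ and $\Gamma_1$ share a point of $\Sigma$; by uniqueness of solutions the two trajectories then coincide, producing an orbit that leaves $E_1$ as $t\to-\infty$ and tends to $E_2$ as $t\to+\infty$, i.e. the desired heteroclinic connection. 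It therefore suffices to show that $\Delta$ changes sign on $I$.

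The sign change is the crux, and I would extract it from the two transcritical bifurcations at the endpoints. As $\lambda\downarrow\lambda_1$, Theorem~\ref{thm4.1}(iii) produces an interior equilibrium $\hat E$ that detaches from $E_1$; the exchange of stability accompanying the transcritical bifurcation makes $\hat E$ locally a stable node (its critical eigenvalue becomes negative while the other stays $\approx\zeta_1<0$), so $\Gamma_2=W^{u}(E_1)$ is captured by $\hat E$ and cannot reach $E_2$, which pins the side of $\Sigma$ on which $\Gamma_2$ lies relative to $\Gamma_1$ and fixes one sign of $\Delta$. Symmetrically, as $\lambda\uparrow\lambda_0$, Theorem~\ref{thm4.1}(ii) produces an interior equilibrium that collides with $E_2$ and is, just below $\lambda_0$, an unstable node; near this collision $\Gamma_1=W^{s}(E_2)$ emanates from that bifurcating source, so $\Gamma_2$ is forced to meet $\Sigma$ on the opposite side, giving $\Delta$ the opposite sign. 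The intermediate value theorem then yields $\tilde\lambda\in I$ with $\Delta(\tilde\lambda)=0$, i.e. a heteroclinic orbit and hence a heteroclinic bifurcation at $\lambda=\tilde\lambda$.

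The hardest part is exactly this endpoint bookkeeping: one must follow the one-dimensional manifolds as hyperbolicity degenerates at $\lambda_1$ and $\lambda_0$, confirm that the arcs can be taken long enough to meet $\Sigma$ throughout a closed subinterval $[\lambda_a,\lambda_b]\subset I$, and verify that each manifold is genuinely captured by the bifurcating interior equilibrium so that the two sides of $\Sigma$ are truly opposite. A pointwise rotated-vector-field shortcut is unavailable here, since a direct computation gives $f_1\,\partial_\lambda f_2-f_2\,\partial_\lambda f_1=\tfrac{xy(b+y)}{[b+y+hx(\lambda+Ay)]^2}\bigl(r_1x(1-\tfrac{x}{k_1})(x-k_0)-sy\bigr)$, which is sign-indefinite on the interior; hence the monotone-separatrix (Melnikov) argument cannot simply be invoked, and the local bifurcation analysis above is what must carry the sign change. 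The boundedness and positive invariance supplied by Theorems~\ref{thm02} and~\ref{lem08} are used throughout to keep all orbits and limit sets in a fixed compact region, so that Poincaré–Bendixson reasoning and the continuity of $\Delta$ remain valid.
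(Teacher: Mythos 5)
Your overall strategy --- hyperbolic saddle structure at $E_1$ and $E_2$, continuous dependence of the one-dimensional invariant manifolds on $\lambda$, and an intermediate-value argument applied to a scalar separation function --- is the same as the paper's. The genuine problem lies in how you realize the separation function and its sign change. You measure separation on a fixed transversal $\Sigma$ placed near $E_2$, and the argument requires $\Gamma_2=W^{u}(E_1)$ to reach $\Sigma$ for every $\lambda$ in a closed subinterval containing the two test values. But your own endpoint analysis destroys this: near $\lambda_1$ you argue that $\Gamma_2$ is captured by the bifurcating stable node $\hat E$ close to $E_1$, so $\Gamma_2$ never travels to a neighborhood of $E_2$ and never meets $\Sigma$; the quantity $y_u(\lambda)$, hence $\Delta(\lambda)$, is simply undefined there, and the phrase ``pins the side of $\Sigma$ on which $\Gamma_2$ lies'' has no content without an actual intersection. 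The same objection threatens interior values of $\lambda$: nothing you say excludes $\Gamma_2$ being absorbed by an interior attractor (equilibrium or limit cycle) before reaching $\Sigma$, which would break both the definition and the continuity of $\Delta$ on $[\lambda_a,\lambda_b]$. You flag this as ``the hardest part'' but do not supply the missing argument, and the capture mechanism you propose for the sign change is in direct tension with the definition of $\Delta$.

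The paper resolves exactly this difficulty by choosing a dynamically forced section instead of a fixed transversal: it compares the first intersections $(x_1,y_1)$ and $(x_2,y_2)$ of $\Gamma_1$ and $\Gamma_2$ with the predator nullcline $f^{(2)}(x,y)=0$, i.e.\ the curve $y=g^{(2)}(x)$. Both manifolds are shown to cross this curve --- $\Gamma_2$ because it starts in $\{f^{(2)}>0\}$ where $y$ is strictly increasing and trajectories are bounded, and $\Gamma_1$ by the backward-time estimate carried out in the proof --- so the comparison $y_1-y_2$ is defined on the whole parameter range. The sign change then follows not from the stability type of the bifurcating interior equilibrium but from the elementary geometric fact that as $\lambda$ tends to an endpoint the nullcline passes through the corresponding boundary equilibrium (since $g^{(2)}(k_1)\to 0$ or $g^{(2)}(k_0)\to 0$), so the intersection point of the degenerating manifold tends to that boundary equilibrium and its $y$-coordinate tends to $0$, while the other intersection stays bounded away from $0$. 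To repair your argument you would need to replace $\Sigma$ by such a section, or else prove that $\Gamma_2$ reaches a fixed neighborhood of $E_2$ throughout the chosen subinterval; that is precisely the step your proposal omits.
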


\begin{proof}
Let \( \lambda \in \left( \frac{sb}{k_1(1-sh)}, \frac{sb}{k_0(1-sh)} \right) \).  
Under this assumption, we have \( g^{(2)}(k_0) < 0 \) and \( g^{(2)}(k_1) > 0 \).  
By continuity of \( g^{(2)} \) and Theorem~\ref{two_equilibria}, the system admits a unique interior equilibrium.

Moreover, since
\[
\frac{\lambda k_1}{b + hk_1 \lambda} - s > 0 \quad \text{and} \quad \frac{\lambda k_0}{b + hk_0 \lambda} - s < 0,
\]
Theorem~\ref{thm03} implies that both \( E_1 \) and \( E_2 \) are hyperbolic saddle points. Let \( \Gamma_1 \) denote the stable manifold of \( E_2 \), and \( \Gamma_2 \) the unstable manifold of \( E_1 \).

Let \( \lambda_2 < 0 \) be the negative eigenvalue of the Jacobian \( J(E_2) \). The matrix \( \lambda_2 I - J(E_2) \) is:
\[
\begin{pmatrix}
\lambda_2 - r_1 k_0 \left(1 - \frac{k_0}{k_1} \right) & \frac{\lambda k_0}{b + h\lambda k_0} \\
0 & 0
\end{pmatrix}.
\]
The corresponding eigenvector is
\[
\mathbf{v} =
\begin{pmatrix}
- \frac{\lambda k_0}{(b + h\lambda k_0)\left[\lambda_2 - r_1 k_0 \left(1 - \frac{k_0}{k_1} \right)\right]} \\
1
\end{pmatrix}.
\]
Since \( E_2 \) is a hyperbolic saddle, the Stable Manifold Theorem ensures that \( \Gamma_1 \) is a smooth curve tangent to the stable eigenspace. Therefore, its slope at \( E_2 \) is:
\[
\alpha_1 = \frac{1}{-\frac{\lambda k_0}{(b + h\lambda k_0)\left[\lambda_2 - r_1k_0(1 - \frac{k_0}{k_1})\right]}}= \frac{\left[-\lambda_2 + r_1 k_0 \left(1 - \frac{k_0}{k_1} \right)\right](b + h\lambda k_0)}{\lambda k_0} > 0.
\]

We now compare this with the slope of the nullcline \( f^{(1)}(x,y) = 0 \) at \( E_2 \):
\[
\alpha_2 = \left. -\frac{\partial f^{(1)}/\partial x}{\partial f^{(1)}/\partial y} \right|_{E_2} = \frac{r_1\left(1 - \frac{k_0}{k_1} \right)(b + h k_0 \lambda)}{\lambda} > 0.
\]
Since \( -\lambda_2 > 0 \), it follows that \( \alpha_1 > \alpha_2 \), meaning \( \Gamma_1 \) lies above the nullcline \( f^{(1)} = 0 \) in a neighborhood of \( E_2 \).

Define the following regions:
\[
C := \{(x, y) \in \mathbb{R}_+^2 : f^{(1)}(x, y) < 0 \}, \quad
D := \{(x, y) \in \mathbb{R}_+^2 : f^{(2)}(x, y) < 0 \}.
\]

We claim that \( \Gamma_1 \cap \{ f^{(1)} = 0 \} \cap \overline{C \cap D} = \{E_2\} \).  
Suppose, for contradiction, that another intersection point \( (\bar{x}, \bar{y}) \neq E_2 \) exists in \( \overline{C \cap D} \).  
Then the trajectory \( (x_{\Gamma_1}(t), y_{\Gamma_1}(t)) \) starting at \( (\bar{x}, \bar{y}) \in \Gamma_1 \) satisfies:
\begin{align*}
\lim_{t \to 0} \frac{d y_{\Gamma_1}(t)}{d x_{\Gamma_1}(t)} 
&= \lim_{t \to 0} 
\frac{
\displaystyle \frac{(\lambda + A y_{\Gamma_1}) x_{\Gamma_1} y_{\Gamma_1}}{b + y_{\Gamma_1} + h x_{\Gamma_1} (\lambda + A y_{\Gamma_1})} - s y_{\Gamma_1}
}{
\displaystyle r_1 x_{\Gamma_1} \left(1 - \frac{x_{\Gamma_1}}{k_1} \right) (x_{\Gamma_1} - k_0) - \frac{(\lambda + A y_{\Gamma_1}) x_{\Gamma_1} y_{\Gamma_1}}{b + y_{\Gamma_1} + h x_{\Gamma_1} (\lambda + A y_{\Gamma_1})}
} \\
&= \lim_{t \to 0} \frac{f^{(2)}(x_{\Gamma_1}(t), y_{\Gamma_1}(t))  y_{\Gamma_1}(t)}{f^{(1)}(x_{\Gamma_1}(t), y_{\Gamma_1}(t))  x_{\Gamma_1}(t)} = \infty,
\end{align*}
since \( f^{(1)}(\bar{x}, \bar{y}) = 0 \).  
This contradicts the fact that \( \Gamma_1 \) is a smooth curve: the trajectory \( (x_{\Gamma_1}(t), y_{\Gamma_1}(t)) \) converges to \( E_2 \) as \( t \to \infty \), and therefore its slope at \( (\bar{x}, \bar{y}) \) must be finite. Hence, \( \Gamma_1 \) lies strictly above the nullcline \( f^{(1)} = 0 \) in \( C \cap D \), except at \( E_2 \).

Next, we show that \( \Gamma_1 \) intersects the curve \( f^{(2)}(x, y) = 0 \).  
Suppose, for contradiction, that no such intersection exists.  Let \( E_{\Gamma_1} \in \Gamma_1 \cap (C \cap D) \), and consider the backward trajectory \( (x^-_{\Gamma_1}(t), y^-_{\Gamma_1}(t)) \) with \( t \leq 0 \) starting from \( E_{\Gamma_1} \).  
As \( t \to -\infty \), this trajectory approaches:
\[
\lim_{t \to -\infty} x^-_{\Gamma_1}(t) = \frac{s}{A(1-sh)}, \quad \lim_{t \to -\infty} y^-_{\Gamma_1}(t) = \infty.
\]

Fix \( \epsilon > 0 \). Then there exists \( T < 0 \) such that
\[
x^-_{\Gamma_1}(t) \in \left[\frac{s}{A(1-sh)} - \epsilon, \frac{s}{A(1-sh)}\right) \quad \text{for all } t \leq T.
\]
Due to continuity of \( f^{(1)} \), there exists a constant \( K_1(\epsilon) > 0 \) such that
\[
f^{(1)}(x^-_{\Gamma_1}(t), y^-_{\Gamma_1}(t)) \leq -K_1(\epsilon) \quad \text{for all } t \leq T.
\]
Additionally, by Theorem~\ref{thm02}, trajectories remain bounded, so
\[
\left| f^{(2)}(x^-_{\Gamma_1}(t), y^-_{\Gamma_1}(t)) y^-_{\Gamma_1}(t) \right| \leq K \quad \text{for all } t \leq 0.
\]
Combining the above gives
\[
\left| \frac{d y^-_{\Gamma_1}(t)}{d x^-_{\Gamma_1}(t)} \right| 
= \left| \frac{f^{(2)}(x^-_{\Gamma_1}(t), y^-_{\Gamma_1}(t)) y^-_{\Gamma_1}(t)}{f^{(1)}(x^-_{\Gamma_1}(t), y^-_{\Gamma_1}(t))  x^-_{\Gamma_1}(t)} \right| 
\leq \left| \frac{K}{x^-_{\Gamma_1}(t) K_1(\epsilon)} \right| 
\leq \left| \frac{K}{\left[\frac{s}{A(1-sh)} - \epsilon\right] K_1(\epsilon)} \right|.
\]
contradicting the previous result that \( \Gamma_1(x) \to \infty \) as \( x \to \left(\frac{s}{A(1-sh)}\right)^- \).  
Hence, \( \Gamma_1 \) must intersect \( f^{(2)} = 0 \); denote this point by \( (x_1, y_1) \).  
Likewise, let \( (x_2, y_2) \) denote the intersection of \( \Gamma_2 \) with \( f^{(2)} = 0 \).

We now show that there exists a critical value \( \tilde{\lambda} \) such that 
\[
(x_1, y_1) = (x_2, y_2).
\]

Indeed, let \( (x^*, y^*) \) be the intersection point of the nullclines \( f^{(1)} = 0 \) and \( f^{(2)} = 0 \). The value \( y_1 \) varies continuously with \( \lambda \) due to the continuity of both \( \Gamma_1 \) and \( f^{(1)} = 0 \). Similarly, \( y_2 \) also depends continuously on \( \lambda \). Thus, there exists a small constant \( m > 0 \) such that:
\begin{itemize}
    \item If \( \lambda \in \left[\frac{sb}{k_1(1 - sh)} + m,\ \frac{sb}{k_0(1 - sh)} - m \right] \), then both \( y_1 \) and \( y_2 \) vary continuously with \( \lambda \).
    
    \item When \( \lambda = \frac{sb}{k_1(1 - sh)} + m \), we have \( y_1 > y_2 \) since, as \( \lambda \to \frac{sb}{k_1(1 - sh)} \), the point \( (x_2, y_2) \) tends to \( E_1 = (k_1, 0) \), and \( \Gamma_1 \) is increasing with respect to \( x \) in \( C \cap D \).
    
    \item When \( \lambda = \frac{sb}{k_0(1 - sh)} - m \), we have \( y_1 < y_2 \) for similar reasons.
\end{itemize}

By the Intermediate Value Theorem, there exists a critical value \( \tilde{\lambda} \in \left( \frac{sb}{k_1(1-sh)}, \frac{sb}{k_0(1-sh)} \right) \) such that \( y_1 = y_2 \), and thus \( (x_1, y_1) = (x_2, y_2) \).  
 The uniqueness of the solution to system~\eqref{equ02} implies that this common point corresponds to a heteroclinic orbit connecting the saddle points \( E_1 \) and \( E_2 \). This completes the proof.
\end{proof}

In Figure~\ref{figure6}, we illustrate the existence of heteroclinic orbits in system~\eqref{equ02} for two different values of the parameter \( \lambda \). The parameter values are chosen as follows: \( r_1 = 0.6 \), \( k_1 = 5 \), \( k_0 = 0.5 \), \( A = 0.1 \), \( b = 1.5 \), \( h = 0.9 \), and \( s = 0.44 \). 

In the left panel, corresponding to \( \lambda = 0.9776 \), the stable manifold \( \Gamma_1 \) lies above the unstable manifold \( \Gamma_2 \). In contrast, the right panel shows the case \( \lambda = 0.9773 \), where \( \Gamma_1 \) lies below \( \Gamma_2 \). These observations indicate that there exists a critical value \( \tilde{\lambda} \in (0.9773, 0.9776) \) at which the manifolds coincide, confirming the presence of a heteroclinic orbit in the system.

\begin{figure}[H]
  \begin{center}  
    \includegraphics[scale=0.47]{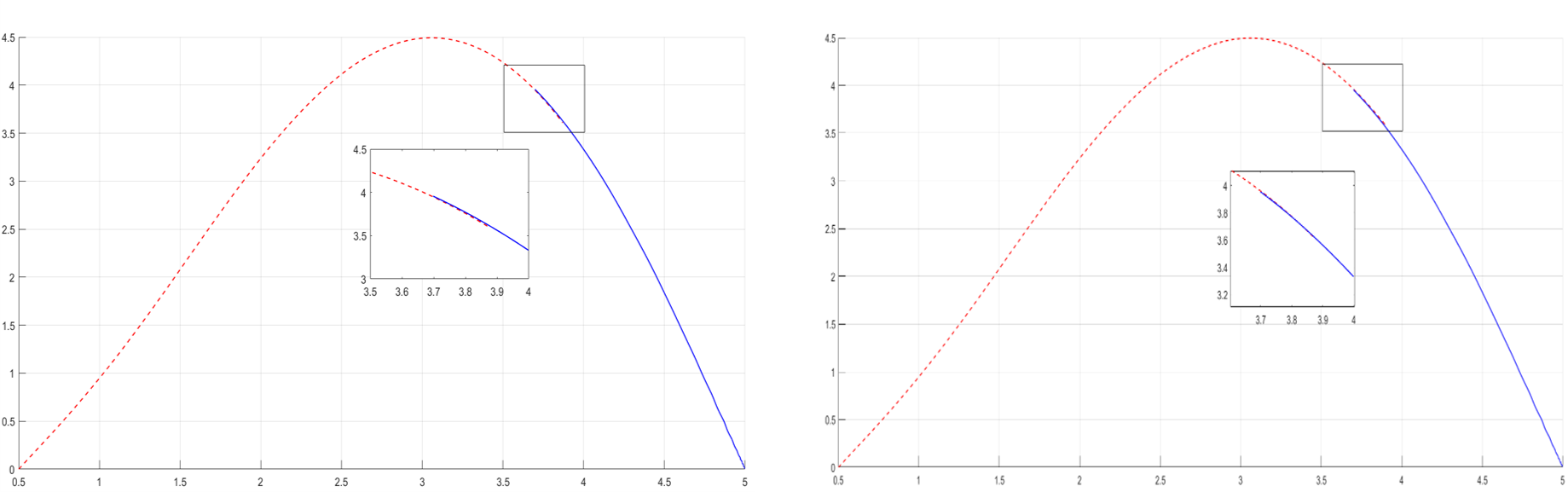}
    \caption{Existence of heteroclinic orbits in system~\eqref{equ02}. The dotted curve represents the stable manifold \( \Gamma_1 \), while the solid curve corresponds to the unstable manifold \( \Gamma_2 \). Left panel: for \( \lambda = 0.9776 \), \( \Gamma_1 \) lies above \( \Gamma_2 \). Right panel: for \( \lambda = 0.9773 \), \( \Gamma_1 \) lies below \( \Gamma_2 \). These configurations imply that a heteroclinic orbit exists for some \( \lambda \in (0.9773, 0.9776) \).}
    \label{figure6}
  \end{center}
\end{figure}

Figure~\ref{figure7} illustrates a heteroclinic bifurcation in system~\eqref{equ02} with parameter values \( r_1 = 0.6 \), \( k_1 = 5 \), \( k_0 = 0.5 \), \( A = 0.1 \), \( b = 1.5 \), \( h = 0.9 \), and \( s = 0.44 \). The left and right panels display the manifold originating from \( E_1 \) for \( \lambda = 0.9773 \) and \( \lambda = 0.9776 \), respectively. 

As seen in the figure, the manifold structure changes as \( \lambda \) varies, indicating that a closed orbit forms at some critical value \( \tilde{\lambda} \in (0.9773, 0.9776) \), consistent with the occurrence of a heteroclinic bifurcation.

\begin{figure}[H]
  \begin{center}  
    \includegraphics[scale=0.53]{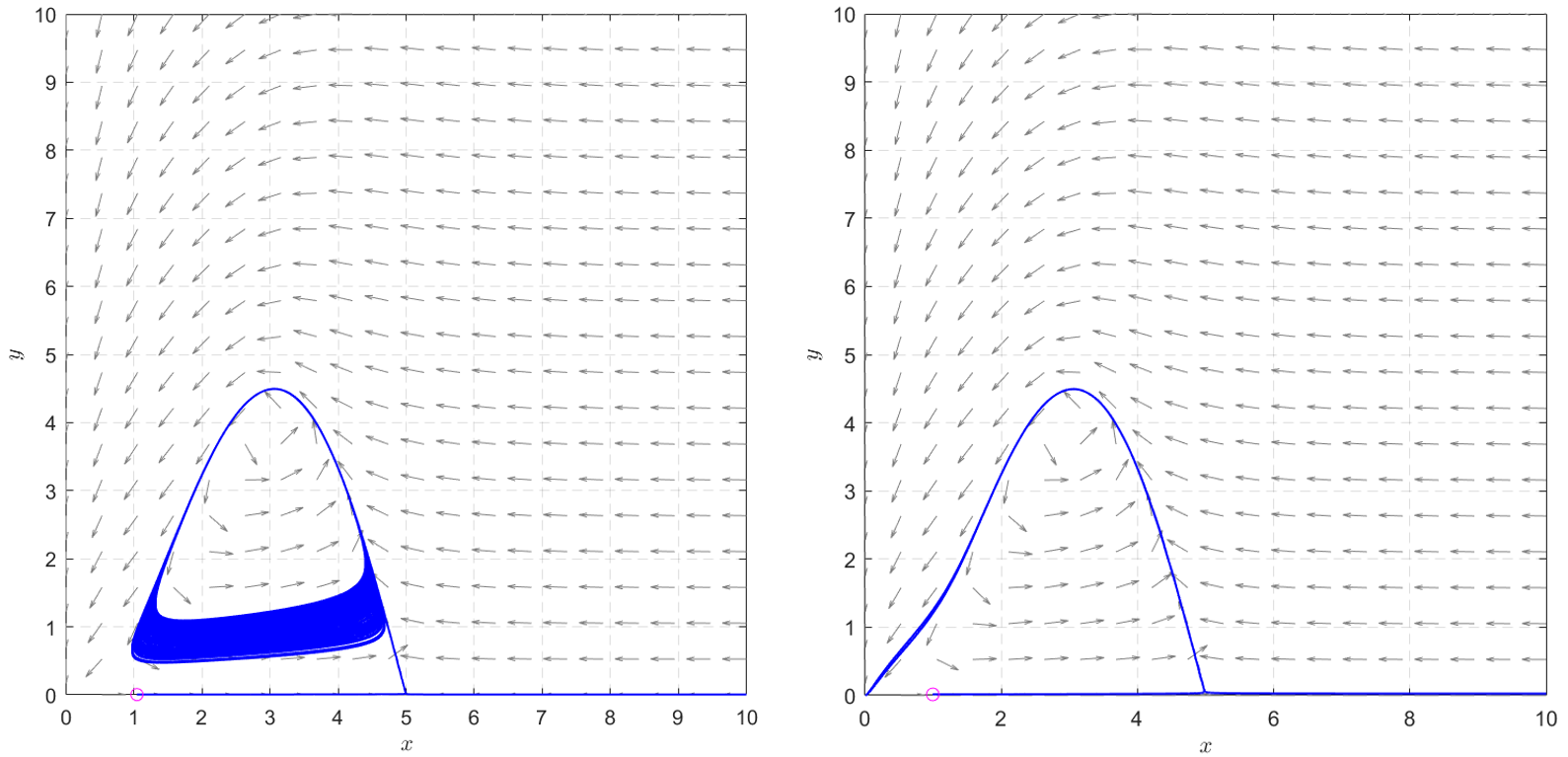}
    \caption{Heteroclinic bifurcation in system~\eqref{equ02}. The left and right panels show the manifold emanating from \( E_1 \) for \( \lambda = 0.9773 \) and \( \lambda = 0.9776 \), respectively. The qualitative change in manifold behavior supports the existence of a heteroclinic orbit at some intermediate value \( \tilde{\lambda} \in (0.9773, 0.9776) \).}
    \label{figure7}
  \end{center}
\end{figure}

\subsection{Global transcritical bifurcation on a closed cycle}
In this subsection, we explore global bifurcations, such as heteroclinic  and transcritical bifurcations on invariant cycle, which involve topological changes in the system's phase space.

\begin{theorem}
    If the parameter $k_0$ transitions from positive to negative, specifically $k_0 = 0$, the system \eqref{equ02} undergoes a transcritical bifurcation at $E_0$. Furthermore, if $k_0$ becomes negative and the system lacks both stable interior equilibria and periodic solutions, a closed cycle emerges.
\end{theorem}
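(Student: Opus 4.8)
The plan is to prove the two assertions separately, taking $k_0$ (not $\lambda$) as the bifurcation parameter for the first claim and using a Poincaré–Bendixson argument for the second. For the transcritical bifurcation at $E_0$ when $k_0=0$, I would apply Lemma~\ref{lem03} with $\kappa=k_0$ and $(X_0,\mu_0)=(E_0,0)$. From the proof of Theorem~\ref{thm03} the Jacobian is $J(E_0)=\operatorname{diag}(-r_1k_0,-s)$, which at $k_0=0$ degenerates to $\operatorname{diag}(0,-s)$; hence $\gamma=0$ is a simple eigenvalue with the remaining eigenvalue $-s\neq 0$, and the right and left null vectors are $\mathbf V=(1,0)^T$ and $\mathbf W=(1,0)^T$, already normalized so that $\mathbf W^T\mathbf V=1$. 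The remaining work is the routine verification of \eqref{trans_cond1}--\eqref{trans_cond3}. Since $f_2$ is independent of $k_0$ and $\partial f_1/\partial k_0=-r_1x(1-x/k_1)$ carries a factor $x$, one gets $\mathbf W^T f_{k_0}(E_0)=0$, while $\mathbf W^T[D_X f_{k_0}(E_0)\mathbf V]=-r_1\neq 0$ and, using \eqref{f1xx}, $\mathbf W^T[D_X^2 f(E_0)(\mathbf V,\mathbf V)]=\partial^2 f_1/\partial x^2|_{E_0}=2r_1\neq 0$. All three conditions hold, so a transcritical bifurcation occurs at $E_0$; geometrically this is the collision of $E_2=(k_0,0)$ with $E_0$ as $k_0\downarrow 0$, with $E_2$ leaving the first quadrant for $k_0<0$.

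For the second assertion I would first pin down the boundary picture under the weak Allee effect ($k_0<0$). By Theorem~\ref{thm03}, $E_0$ is a hyperbolic saddle whose stable manifold is the positive $y$-axis (eigenvalue $-s$) and whose unstable direction is the $x$-axis (eigenvalue $-r_1k_0>0$). On the invariant $x$-axis one has $\dot x=r_1x(1-x/k_1)(x-k_0)>0$ for $0<x<k_1$, so the segment $(0,k_1)\times\{0\}$ is a heteroclinic orbit from $E_0$ to $E_1$, while on the $y$-axis $\dot y=-sy<0$. Theorem~\ref{lem08} confines every interior trajectory to the compact positively invariant set $\mathbf B^\delta$. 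The idea is then to invoke the Poincaré–Bendixson theorem inside $\mathbf B^\delta$: by hypothesis there is no stable interior equilibrium and no periodic orbit, so the $\omega$-limit set of an interior trajectory can be neither a point attractor nor a cycle, and the only surviving possibility is a cycle graph, i.e.\ a closed chain of boundary saddles joined by connecting orbits. This cycle graph is the \emph{closed cycle} of the statement.

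The main obstacle is precisely the realization of this cycle graph: one must exhibit the return connection from $E_1$ toward $E_0$ that closes the loop begun by the $x$-axis orbit $E_0\to E_1$. Here I would reuse the manifold-tracking technique of Theorem~\ref{Heteroclinic_orbit}: under the stated sign conditions $E_1$ is a saddle whose unstable manifold enters the interior, and a combination of the no-periodic-orbit hypothesis, the boundedness from Theorem~\ref{lem08}, and a continuity/intermediate-value argument in the parameter is used to force this unstable manifold to accumulate on the stable manifold of $E_0$, producing the closed invariant cycle enclosing the unstable interior equilibrium. The delicate points are ruling out that an interior trajectory converges instead to the unstable interior equilibrium, and checking that the flow transversality along the axes is consistent with a single enclosing cycle; the local eigenvalue and nullcline-slope comparisons at $E_0$ and $E_1$ established above and in Theorem~\ref{thm03} provide the information needed to orient the manifolds and carry out this step.

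Throughout, the transcritical analysis of the first part supplies the organizing principle for the second: as $k_0$ crosses zero the loss of stability of $E_0$ converts it into the saddle that anchors the cycle graph, so the emergence of the closed cycle for $k_0<0$ is the global counterpart of the local exchange of stability detected by Lemma~\ref{lem03}. I would close by noting, as in the numerical illustrations, that the construction identifies the closed cycle as a heteroclinic structure rather than a limit cycle, consistent with the assumed absence of periodic solutions.
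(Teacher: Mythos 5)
Your proposal is correct and follows essentially the same route as the paper: the transcritical part uses Lemma~\ref{lem03} with $k_0$ as the bifurcation parameter, the same degenerate Jacobian $\operatorname{diag}(0,-s)$, the same null vectors $\mathbf V=\mathbf W=(1,0)^T$, and the same three nondegeneracy values ($0$, $-r_1$, and $2r_1$). For the second claim the paper offers only a one-sentence appeal to the absence of stable interior equilibria and periodic orbits; your Poincar\'e--Bendixson argument via the invariant set $\mathbf B^\delta$, the boundary heteroclinic orbit $E_0\to E_1$ along the $x$-axis, and the explicit identification of the remaining obstacle (closing the return connection into the stable manifold of $E_0$) supplies more detail than the paper itself and is consistent with its intent.
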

\begin{proof}
    When $k_0 = 0$ and $(x,y) = (0, 0)$, the Jacobian matrix becomes
    \begin{align*}
      J = 
        \begin{pmatrix}
            0 & 0 \\
            0 & -s
        \end{pmatrix}.
    \end{align*}
  The corresponding eigenvectors are ${\bf V} = (1, 0)^T$, and ${\bf W }= (1, 0)^T$.
  Defining
  \begin{align*}
      {\bf f_{k_0}} = 
        \begin{pmatrix}
             -r_1x(1-\frac{x}{k_1}) \\
            0 
        \end{pmatrix},
    \end{align*}
    we find ${\bf f_{k_0}(E_0)} = (0, 0)^T$, satisfying the condition \eqref{trans_cond1} of Lemma \ref{lem03}. The conditions \eqref{trans_cond2} and \eqref{trans_cond3}  are also verified:
  \begin{align*}
    {\bf W}^T[D{\bf f_{k_0}}(E_0){\bf V}] & = (1, 0) \big[ \begin{pmatrix}
        -r_1 & 0 \\
        0 & 0
    \end{pmatrix} \begin{pmatrix}
       1 \\ 0
    \end{pmatrix} \big] 
     \neq 0,
\end{align*}
and  
\begin{align*}
    {\bf W}^T[D^2{\bf f}(E_0){(\bf V, V)}] 
    & = (1, 0) \begin{pmatrix}
          \frac{\partial^2 f_1}{\partial x^2}v_1^2 + \frac{\partial^2 f_1}{\partial x \partial y}v_1v_2 + \frac{\partial^2 f_1}{\partial y^2}v_2^2 \\
           \frac{\partial^2 f_2}{\partial x^2}v_1^2 + \frac{\partial^2 f_2}{\partial x \partial y}v_1v_2 + \frac{\partial^2 f_2}{\partial y^2}v_2^2 
    \end{pmatrix}
    = \frac{\partial^2 f_1}{\partial x^2} = 2r_1 \neq 0.
\end{align*}
Therefore, according to Lemma \ref{lem03}, a transcritical bifurcation occurs at $E_0$. Given the absence of stable interior equilibria or periodic solutions, and the saddle point nature of $E_0$, a closed manifold originating and terminating at $E_1$ exists.
\end{proof}

Figure \ref{figure8} illustrates a transcritical bifurcation in system \eqref{equ02} for parameter values $r_1 = 1.5$, $k_1 = 3$, $\lambda = 0.3$, $A = 0.8$, $b = 0.5$, $h = 0.7$, and $s = 0.5$. The left panel shows the phase portrait for $k_0 = 0.3$, while the right panel depicts the phase portrait for $k_0 = -0.3$. In the latter case, a stable limit cycle emerges.
\begin{figure}[H]
  \begin{center}  
    \includegraphics[scale=0.5]{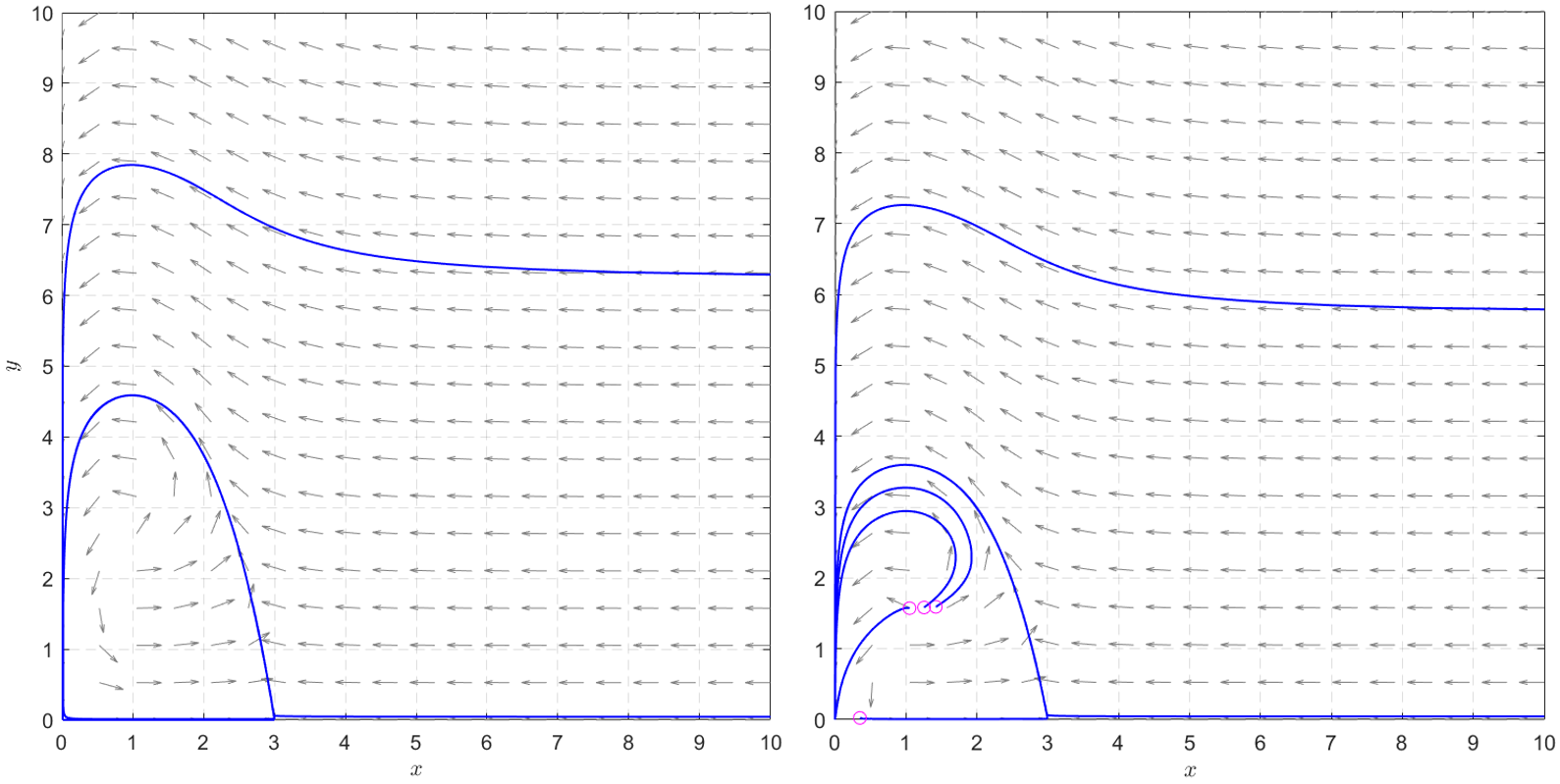}
    \caption{Transcritical bifurcation. Left panel: $k_0 = 0.3$.  Right panel: $k_0 = -0.3$. A stable limit cycle exists in the right panel.}  \label{figure8}
  \end{center}
\end{figure}

\section{Conclusions}\label{sec9}

This paper investigates a predator-prey model in which the prey exhibits the Allee effect and the predators engage in cooperative hunting, with an emphasis on the impact of random noise on population dynamics.

We begin by analyzing the deterministic system, where we establish the existence and uniqueness of local and global solutions. We identify conditions for solution boundedness and the existence of interior equilibrium points. Through Jacobian matrix analysis, we assess the stability of both boundary and interior equilibria and provide conditions for global solution stability. Additionally, we apply bifurcation theory to study transcritical bifurcations, saddle-node bifurcations, Hopf bifurcations. Notably, we prove the existence of heteroclinic orbits and establish conditions for both heteroclinic and transcritical bifurcations that have global significance.

This comprehensive analysis of the system provides valuable insights into the dynamics of predator-prey interactions under both deterministic conditions, with implications for ecological modeling and the management of such populations.


\end{document}